\documentclass[10pt,reqno]{amsart}

\usepackage[T1]{fontenc}
\usepackage[utf8]{inputenc}
\usepackage{amsmath,amssymb,mathtools}
\IfFileExists{newtxtext.sty}{\usepackage{newtxtext,newtxmath}}{\usepackage{lmodern}}
\usepackage[a4paper,margin=1.15in]{geometry}
\usepackage{microtype}
\usepackage{xcolor}
\usepackage[colorlinks=true,linkcolor=blue!55!black,citecolor=blue!55!black,urlcolor=blue!55!black]{hyperref}
\allowdisplaybreaks
\numberwithin{equation}{section}
\newcommand{\R}{\mathbb R}
\newcommand{\glyg}{\mathfrak g}
\newcommand{\nalg}{\mathfrak n}
\newcommand{\one}{\mathbf 1}
\newcommand{\RicB}{\Ric^{B}}
\newcommand{\CEadj}{d_{\mu}^{\dagger}}
\newcommand{\geomcod}{d_{g,\mu}^{*}}
\DeclareMathOperator{\Ric}{Ric}
\DeclareMathOperator{\Der}{Der}
\DeclareMathOperator{\ad}{ad}
\DeclareMathOperator{\Sym}{Sym}
\DeclareMathOperator{\tr}{tr}
\DeclareMathOperator{\Id}{Id}

\DeclareMathOperator{\im}{im}
\DeclareMathOperator{\diag}{diag}
\DeclareMathOperator{\Skew}{Skew}

\theoremstyle{plain}
\newtheorem{theorem}{Theorem}[section]
\newtheorem{proposition}[theorem]{Proposition}
\newtheorem{lemma}[theorem]{Lemma}
\newtheorem{corollary}[theorem]{Corollary}
\newtheorem{mainresult}{Theorem}
\newtheorem{maincorollary}[mainresult]{Corollary}

\theoremstyle{definition}
\newtheorem{definition}[theorem]{Definition}
\theoremstyle{remark}
\newtheorem{remark}[theorem]{Remark}
\newtheorem{example}[theorem]{Example}

\title[Harmonicity and Existence of Algebraic Solitons]{Harmonicity and Existence of Algebraic Generalized Ricci Solitons}
\author{Huibin Chen}
\address{Institute of Mathematics, School of Mathematical Sciences, Nanjing Normal University, Nanjing 210023, P.R. China}
\email{chenhuibin@njnu.edu.cn}
\author{Zhiqi Chen}
\address{School of Mathematics and Statistics, Guangdong University of Technology, Guangzhou 510520, P.R. China}
\email{chenzhiqi@gdut.edu.cn}
\author{Fuhai Zhu}
\address{Department of Mathematics, Nanjing University, Nanjing 210023, P.R. China}
\email{zhufuhai@nju.edu.cn}
\subjclass[2020]{53E20, 53C25, 22E25, 17B30}
\keywords{generalized Ricci flow, harmonic torsion, Killing form, generalized nilsoliton, Einstein nilradical}
\date{}
\hypersetup{pdftitle={Harmonicity and Existence of Algebraic Generalized Ricci Solitons},pdfauthor={Huibin Chen, Zhiqi Chen, Fuhai Zhu},pdfsubject={Harmonicity criteria and existence and nonexistence of generalized nilsolitons},pdfkeywords={generalized Ricci flow, harmonic torsion, Killing form, generalized nilsoliton, Einstein nilradical}}

\begin{document}

\begin{abstract}
In this paper, we study the harmonicity and existence of algebraic generalized Ricci solitons. Firstly, we characterize harmonic torsion of algebraic generalized Ricci solitons on arbitrary metric Lie algebras by an identity involving the Killing form. In particular, positive semidefiniteness of the Killing form implies harmonicity without a unimodularity assumption. Then, we construct generalized nilsolitons with nonzero torsion on indecomposable three-step nilpotent Lie algebras admitting no classical nilsoliton, in dimension seven and in dimensions $6m+d$ for $m>d\geq1$. Furthermore, we provide a spectral obstruction for nilpotent Lie algebras with abelian derived algebra and an obstruction based on the action of derivations on the quotient by the center. Combining these obstructions with explicit constructions, we prove that the filiform Lie algebra $\mathfrak m_2(n)$, $n\geq5$, admits a generalized nilsoliton if and only if $5\leq n\leq8$.
\end{abstract}
\maketitle

\section{Introduction}

The generalized Ricci flow couples the Ricci flow of a Riemannian metric to the evolution of a closed three-form. It belongs to the differential geometry of the generalized tangent bundle, where complex and symplectic structures can be treated within a common framework \cite{Hi2003,Gu2011}. The underlying Courant algebroid originates in the work of Courant and of Liu, Weinstein and Xu \cite{Co1990,LiWeXu1997}; exact Courant algebroids and their cohomological classification are discussed in \cite{Se2017}. The interpretation of curvature and its evolution in this setting also connects the flow with T-duality and renormalization group equations \cite{Ga2019,SeVa2017,SeVa2020}. We use the conventions developed systematically in \cite{GaSt2021}. Thus a metric $g(t)$ and a closed three-form $H(t)$ evolve by
\begin{equation}\label{eq:intro-flow}
\left\{
\begin{aligned}
\frac{\partial g}{\partial t}&=-2\Ric_g+\frac12H^2,\\
\frac{\partial H}{\partial t}&=\Delta_gH,\\
dH&=0.
\end{aligned}
\right.
\end{equation}
where $H^2(X,Y)$ is the full contraction of $\iota_XH$ and $\iota_YH$, and $\Delta_g=-(dd_g^*+d_g^*d)$. Setting $H=0$, it is precisely the Hamilton's Ricci flow \cite{Ha1982}.

The three-form in \eqref{eq:intro-flow} is the torsion of a metric connection. In Hermitian geometry, the Bismut connection is the canonical Hermitian connection with totally skew-symmetric torsion \cite{Bi1989}. If $g$ is Hermitian and $H$ is its Bismut torsion, closedness of $H$ relates the generalized Ricci flow, by a time-dependent diffeomorphism, to the pluriclosed flow introduced by Streets and Tian \cite{StTi2010,StTi2013}. This relation provides a substantial class of geometric examples, while also distinguishing the general system \eqref{eq:intro-flow} from flows constrained by a fixed complex structure. Invariant pluriclosed flows have been studied on nilmanifolds \cite{EnFiVe2015}, on closed complex surfaces \cite{Bo2016}, on two-step nilpotent and almost-abelian Lie groups \cite{ArLa2019}, and on further solvable classes \cite{FiPa2022,FuVe2024}.

Symmetry is equally useful for studying the stationary equations. Compact Hermitian manifolds with flat Bismut connection were classified by Wang, Yang and Zheng \cite{WaYaZh2020}. In the broader Riemannian setting, Podest\`a and Raffero constructed compact homogeneous examples whose Bismut connections are Ricci flat but non-flat \cite{PoRa2023,PoRa2024}, and Lauret and Will obtained further families on compact homogeneous spaces \cite{LaWi2023}. Related variational and stability questions have been investigated for generalized Ricci solitons \cite{Le2024} and for pluriclosed metrics through holomorphic Courant algebroids \cite{GaJoSt2023}.

For left-invariant geometry on a Lie group, the evolution equations reduce to a finite-dimensional system. Lauret's bracket flow realizes the classical homogeneous Ricci flow by varying the Lie bracket on a fixed Euclidean vector space \cite{La2011,La2013}. Paradiso applied a bracket formulation to generalized Ricci flow on nilpotent Lie groups \cite{Pa2021}. Fusi, Lafuente and Stanfield subsequently developed a formulation using Dorfman brackets and the action of both changes of basis and $B$-fields \cite{FuLaSt2024}. Their approach gives long-time existence on solvmanifolds and a geometric notion of generalized Ricci soliton that allows simultaneous scaling of the metric and torsion. Their later work proves subconvergence of rescaled flows to generalized Ricci solitons on simply connected Lie groups with positive-semidefinite Killing form \cite{FuLaSt2026}. These results make the structure and existence of invariant solitons natural questions in their own right.

In the nilpotent case, the classical comparison is particularly precise. A nilsoliton metric satisfies $\Ric=\lambda\Id+D$ for a derivation $D$, and a nilpotent Lie algebra admits such a metric precisely when it is an Einstein nilradical \cite{La2001}. The study of these algebras is closely related to the structure of Einstein solvmanifolds \cite{He1998}, the variational interpretation of the Ricci operator \cite{La2002}, and the pre-Einstein derivation \cite{Ni2011}. Low-dimensional classifications provide explicit examples and obstructions \cite{Wi2003,Fe2012}. More generally, critical points of moment maps and distinguished orbits give an effective framework for the existence problem \cite{Ja2012,BoLa2020}. 

In the present paper, we are devoted to deal with the algebraic generalized Ricci solitons given in \cite{FuLaSt2024}. On a metric Lie algebra $(\glyg,\mu,\bar g)$, their equations in the preferred splitting are
\begin{equation*}
\left\{
\begin{aligned}
\Ric_\mu-\frac14H^2&=\lambda\Id+D,\\
\Delta_\mu H&=-2\lambda H+\rho(D)H,
\end{aligned}
\right.
\end{equation*}
where $D\in\Der(\mu)$ and $d_\mu H=0.$
On a nilpotent Lie group we call such a pair a \emph{generalized nilsoliton}. 

Our first result concerns the harmonicity of the torsion. We call $H$ \emph{harmonic} if $d_\mu H=0$ and $d_g^*H=0$. The question whether generalized nilsolitons have harmonic torsion was posed in \cite[Question~7.12]{FuLaSt2024} and answered by Chen, Chen and Zhu \cite[Theorem~1.1]{ChChZh2026}. Fusi, Lafuente and Stanfield proved the same conclusion for unimodular Lie groups with positive-semidefinite Killing form \cite[Corollary~B]{FuLaSt2026}. We obtain a criterion without a unimodularity assumption.

Denoted by $B_\mu$ for the Killing operator and write $U_\mu$ for the vector defined by $\bar g(U_\mu,X)=\tr(\ad_\mu X)$.  Let $\CEadj$ denote the Chevalley--Eilenberg adjoint with respect to the usual exterior-form inner products. The Riemannian codifferential restricted to invariant forms satisfies $\geomcod=\CEadj+\iota_{U_\mu}$ (see \cite[equation~(4)]{FuLaSt2026}). All norms below use full tensor contractions. For a two-form $\alpha$, set $\alpha(X,Y)=\bar g(K_\alpha X,Y)$,  and define
\begin{equation*}
\mathcal Q_\mu(\alpha):=\frac16|d_\mu\alpha|^2-2\tr(K_\alpha^*K_\alpha M_\mu),
\end{equation*}
where $M_\mu$ is the bracket moment-map operator in \eqref{eq:moment-map-operator}.

\begin{mainresult}\label{thm:criterion-introduction}
Let $(\glyg,\mu,\bar g,H)$ be an algebraic generalized Ricci soliton and set $\eta=\geomcod H$. Then
\begin{equation}
\tr(K_{\eta}^*K_{\eta}B_\mu)=-\frac16|d_\mu\eta|^2-\mathcal Q_\mu(\eta)-\frac12\tr(K_{\eta}^*K_{\eta}H^2)-2|\iota_{U_\mu}\eta|^2\leq0.
\end{equation}
Equality holds if and only if $H$ is harmonic.
\end{mainresult}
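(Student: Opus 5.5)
The plan is to pair the second soliton equation with $H$ and then integrate by parts using the Chevalley--Eilenberg adjoint. Take the $\bar g$-inner product of $\Delta_\mu H=-2\lambda H+\rho(D)H$ with a suitable test form. Since $d_\mu H=0$, we have $\Delta_\mu H=-d_\mu\geomcod H=-d_\mu\eta$. It is therefore natural to pair with $d_\mu\eta$, or equivalently to apply $\geomcod$ once more. Here $\eta=\geomcod H$ is a two-form. Because $\geomcod\circ\geomcod=0$ on invariant forms (as the formal adjoint of $d_\mu$ twisted by $\iota_{U_\mu}$), we get $\geomcod\eta=0$. Applying $\geomcod$ to the soliton equation therefore gives $\geomcod d_\mu\eta=2\lambda\eta-\geomcod(\rho(D)H)$.

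The key step is to compute $\geomcod(\rho(D)H)$ as a commutator. For a derivation $D$, $[\rho(D),d_\mu]=0$. The adjoint relation gives $[\rho(D),\CEadj]$ expressed through $\rho(D-D^t)$ and $\rho$ of the symmetric part. We then substitute the first soliton equation, $D=\Ric_\mu-\tfrac14H^2-\lambda\Id$, and use the standard formula $\Ric_\mu=M_\mu-\tfrac12B_\mu-S(\ad_\mu U_\mu)$, where $M_\mu$ is the moment-map operator of \eqref{eq:moment-map-operator}. The $\lambda$ terms should cancel the $2\lambda\eta$ term, since $\rho(\Id)$ acts by $-3$ on three-forms and by $-2$ on two-forms. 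Pairing the resulting identity with $\eta$ then yields a scalar identity. On the left it gives $|d_\mu\eta|^2$ (using $\langle\geomcod d_\mu\eta,\eta\rangle=\langle d_\mu\eta,d_\mu\eta\rangle$ up to the $\iota_{U_\mu}$ correction). On the right it gives traces of the form $\tr(K_\eta^*K_\eta A)$ for $A\in\{M_\mu,B_\mu,H^2\}$, together with terms involving $U_\mu$. For a two-form, $\langle\rho(A)\eta,\eta\rangle$ equals a constant multiple of $\tr(K_\eta^*K_\eta A)$ when $A$ is symmetric; this is how the quadratic traces arise. The moment-map terms combine with $\tfrac16|d_\mu\eta|^2$ into $\mathcal Q_\mu(\eta)$. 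The $U_\mu$ terms come from $S(\ad U_\mu)$ and from the $\iota_{U_\mu}$ part of $\geomcod$, and they should collapse to $-2|\iota_{U_\mu}\eta|^2$. This collapse is the step I expect to be the main obstacle: it requires careful bookkeeping of the non-unimodular corrections and the identity $\ad_\mu U_\mu$ skew-part contributions, using $\tr\ad_\mu U_\mu=|U_\mu|^2$ and $d_\mu$-closedness.

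For the inequality, I would show that each term on the right is nonpositive. First, $\tr(K_\eta^*K_\eta H^2)\ge0$ because $H^2$ is positive semidefinite and $K_\eta^*K_\eta\ge0$. Second, the squared norms are manifestly nonnegative. Third, $\mathcal Q_\mu(\eta)\ge0$ needs a separate argument. I would establish it via the moment-map identity $\langle M_\mu,E\rangle=\tfrac14\langle\rho(E)\mu,\mu\rangle$: taking $E=K_\eta^*K_\eta$ shows that $-2\tr(K_\eta^*K_\eta M_\mu)$ equals a sum of squares of brackets weighted by $K_\eta$. Comparing with $|d_\mu\eta|^2$, where $d_\mu\eta(X,Y,Z)=-\sum\eta(\mu(X,Y),Z)$, then gives the bound via Cauchy--Schwarz. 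If this direct route fails, I would instead look for an earlier result in the paper asserting $\mathcal Q_\mu\ge0$.

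Equality holds exactly when all terms vanish, and in particular when $d_\mu\eta=0$. In that case $\langle\eta,\eta\rangle=\langle\geomcod H,\eta\rangle=\langle H,d_\mu\eta\rangle$, which vanishes; the $\iota_{U_\mu}$ correction is also zero because $\iota_{U_\mu}\eta=0$ at equality. Hence $\eta=0$, i.e.\ $H$ is harmonic. Conversely, if $\eta=0$, every term vanishes trivially.
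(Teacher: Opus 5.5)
Your skeleton matches the paper's: derive $\rho(R)\eta+\geomcod d_\mu\eta=0$, pair with $\eta$, insert $\Ric_\mu=M_\mu-\frac12B_\mu-S_\mu$, and use $\mathcal Q_\mu\ge0$. Commuting $\geomcod$ past $\rho(D)$ rather than $\rho(R)$ is fine, provided you note that $DU_\mu=D^*U_\mu=0$. This follows from $\tr\ad_\mu(DX)=\tr[D,\ad_\mu X]=0$, and it is what makes $\iota_{U_\mu}$ commute with $\rho(D)$.

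The two places where non-unimodularity enters are exactly where your argument is missing or wrong. First, you do not carry out the $U_\mu$ bookkeeping, which you call the ``main obstacle''. The tool you propose, $\tr\ad_\mu U_\mu=|U_\mu|^2$, is not what is needed. The pairing gives $\bar g(\geomcod d_\mu\eta,\eta)=\frac13|d_\mu\eta|^2+\bar g(\eta,\iota_{U_\mu}d_\mu\eta)$. The last term must be rewritten with Cartan's formula on invariant forms, $\iota_{U_\mu}d_\mu\eta=\rho(\ad_\mu U_\mu)\eta-d_\mu\iota_{U_\mu}\eta$. When the first piece is paired with $\eta$, the skew part of $\ad_\mu U_\mu$ drops out. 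What remains is $-2\tr(K_\eta^*K_\eta S_\mu)$, which cancels the $-S_\mu$ in the Ricci decomposition. For the second piece, test $\geomcod\eta=0$ against the one-form $\iota_{U_\mu}\eta$; this gives $\bar g(\eta,d_\mu\iota_{U_\mu}\eta)=-2|\iota_{U_\mu}\eta|^2$. Without these two steps, both the coefficient $-2|\iota_{U_\mu}\eta|^2$ and the disappearance of $S_\mu$ are only asserted.

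Second, the equality case fails as written. In $|\eta|^2=\frac13\bar g(H,d_\mu\eta)+\bar g(\eta,\iota_{U_\mu}H)$, the correction term is, up to a constant, the pairing of $H$ with $U_\mu^\flat\wedge\eta$. The condition $\iota_{U_\mu}\eta=0$ does not make it vanish. For example, with $U_\mu=e_1$, $\eta=e^{23}$ and $H=e^{123}$, one has $\iota_{U_\mu}\eta=0$ but $\bar g(\eta,\iota_{U_\mu}H)=2$.

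You need a different vanishing term. From $\tr(K_\eta^*K_\eta H^2)=\sum_j|\iota_{K_\eta^*e_j}H|^2=0$ you get $\im K_\eta\subset\ker H$. Then $\bar g(\eta,\iota_{U_\mu}H)=\sum_iH(U_\mu,e_i,K_\eta e_i)=0$, which kills the correction term.

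A smaller point concerns $\mathcal Q_\mu\ge0$. The quantity $-2\tr(K_\alpha^*K_\alpha M_\mu)$ is not a sum of squares but a difference of two squared norms. Nonnegativity of $\mathcal Q_\mu$ only emerges after combining it with the cross term inside $\frac16|d_\mu\alpha|^2$. Falling back on the known estimate, as the paper does, is the right move.
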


Our proof starts from the identity $\rho(R)\eta+\geomcod d_\mu\eta=0$, where $R=\Ric_\mu-\frac14H^2$ and $\eta=\geomcod H$ (see \cite[equation~(53)]{FuLaSt2024}).  The estimate $\mathcal Q_\mu\geq0$ is due to \cite[Lemma~3.1]{ChChZh2026}.

Since $\eta=\geomcod H$, harmonicity follows if $\tr(K_\alpha^*K_\alpha B_\mu)\geq0$ for every $\alpha$ in the subspace
\begin{equation}
\mathcal C^2_{\mu,\bar g}
=\im\bigl(\geomcod:\Lambda^3\glyg^*\longrightarrow\Lambda^2\glyg^*\bigr).
\end{equation}

\begin{maincorollary}\label{coroB}
An algebraic generalized Ricci soliton has harmonic torsion if for any $\alpha\in\mathcal C^2_{\mu,\bar g}$,
\begin{equation}\label{eq:restricted-positivity-introduction}
\tr(K_\alpha^*K_\alpha B_\mu)\geq0.
\end{equation}
In particular, this conclusion holds whenever $B_\mu\geq0$, and hence on every completely solvable Lie group.
\end{maincorollary}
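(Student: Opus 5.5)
The corollary follows from Theorem~\ref{thm:criterion-introduction}, so I will argue by contradiction from the equality case.

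\textbf{Main step.} Let $(\glyg,\mu,\bar g,H)$ be an algebraic generalized Ricci soliton and put $\eta=\geomcod H$. By definition, $\eta\in\mathcal C^2_{\mu,\bar g}$. The hypothesis \eqref{eq:restricted-positivity-introduction} therefore gives
\[
\tr(K_\eta^*K_\eta B_\mu)\geq0 .
\]
Theorem~\ref{thm:criterion-introduction} gives the opposite inequality $\tr(K_\eta^*K_\eta B_\mu)\leq0$. Hence equality holds, and the equality clause of the theorem says exactly that $H$ is harmonic. Concretely, every nonpositive term on the right-hand side vanishes, and in particular $\eta=0$. Note that $d_\mu H=0$ is already part of the soliton data, so only the vanishing of $\eta$ needs the theorem.

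\textbf{The case $B_\mu\geq0$.} Here I claim $\tr(K_\alpha^*K_\alpha B_\mu)\geq0$ for every two-form $\alpha$, not only those in $\mathcal C^2_{\mu,\bar g}$.
\begin{itemize}
\item The Killing operator $B_\mu$ is $\bar g$-symmetric, since it is defined by $\bar g(B_\mu X,Y)=\tr(\ad_\mu X\,\ad_\mu Y)$.
\item $K_\alpha^*K_\alpha$ is symmetric and positive semidefinite.
\item The trace of a product of two positive semidefinite symmetric operators is nonnegative. To see this, write $B_\mu=C^2$ with $C$ symmetric. Then
\[
\tr(K_\alpha^*K_\alpha B_\mu)=\tr\bigl(CK_\alpha^*K_\alpha C\bigr)=|K_\alpha C|^2\geq0 .
\]
\end{itemize}
So the hypothesis of the corollary holds and the main step applies.

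\textbf{Completely solvable Lie groups.} It remains to show $B_\mu\geq0$ when $\glyg$ is completely solvable, i.e. every $\ad_\mu X$ has only real eigenvalues.
\begin{itemize}
\item By Lie's theorem over $\R$, which applies because all eigenvalues are real, the operators $\ad_\mu X$ are simultaneously triangularizable.
\item Hence $\tr\bigl((\ad_\mu X)^2\bigr)=\sum_i\lambda_i(X)^2\geq0$, where $\lambda_i(X)$ are the real diagonal entries.
\item Therefore $\bar g(B_\mu X,X)\geq0$ for all $X$, that is, $B_\mu\geq0$.
\end{itemize}

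The only step that needs real care is this last one: the real eigenvalues are exactly what make $\sum_i\lambda_i(X)^2$ nonnegative. The rest is formal once Theorem~\ref{thm:criterion-introduction} is available.
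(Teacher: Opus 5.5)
Your proof is correct and follows the paper's argument: $\eta=\geomcod H\in\mathcal C^2_{\mu,\bar g}$ forces $\tr(K_\eta^*K_\eta B_\mu)=0$ and the equality clause of Theorem~\ref{thm:criterion-introduction} gives harmonicity; $B_\mu\geq0$ is handled with a square root; and complete solvability gives $\bar g(B_\mu X,X)=\tr((\ad_\mu X)^2)=\sum_j r_j^2\geq0$. Some remarks are cosmetic: nothing is actually argued by contradiction, and Lie's theorem is unnecessary because the real eigenvalues of the single operator $\ad_\mu X$ already suffice. Also, vanishing of the right-hand terms alone yields only $d_\mu\eta=0$ and $\im K_\eta\subset\ker H$, not $\eta=0$ directly; $\eta=0$ comes from the extra pairing step inside the theorem's proof, which you are entitled to cite through its equality clause.
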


An example (Example~\ref{ex:restricted-positivity-rotating-product}) in Section~\ref{sec:ordinary-harmonicity} shows that \eqref{eq:restricted-positivity-introduction} can hold even when the Killing form has a negative direction. Thus, this condition is strictly weaker than requiring the Killing form to be positive semidefinite.

We next consider whether a given nilpotent Lie algebra admits a generalized nilsoliton. A soliton obtained as a rescaled flow limit may lie on a different Lie group \cite{La2011,FuLaSt2024,FuLaSt2026}. Fusi, Lafuente and Stanfield \cite[Section~7]{FuLaSt2024} discussed the possibility of constructing generalized nilsolitons on nilpotent Lie algebras admitting no classical nilsoliton. We construct such examples in dimension seven and in infinite families of indecomposable nilpotent Lie algebras.

We also give a criterion for the existence of a soliton metric diagonal in a prescribed nice basis $E_1,\ldots,E_n$, with the bracket $\mu$ and the three-form $H_0=\sum_{I\in\mathcal I}h_IE^I$ fixed. Here $\mathcal I$ consists of the triples with $h_I\neq0$. Assume that the subspaces spanned by $\{E_i:i\in I\}$ and $\{E_i:i\notin I\}$ are Lie subalgebras for each $I\in\mathcal I$, and that distinct triples intersect in at most one index. For every diagonal metric, the subalgebra condition makes $H_0$ harmonic, the intersection condition makes $H_0^2$ diagonal, and the nice basis makes the Ricci operator diagonal \cite{LaWi2013}.

Under these assumptions, for $(\mu,H_0)\neq(0,0)$, a diagonal generalized nilsoliton metric with torsion $H_0$ exists if and only if $Gx=\mathbf1$ has a solution with all entries strictly positive. Here $G$ is the Gram matrix of the bracket and three-form weights in \eqref{eq:combined-weight-data}. Adapting the classical convexity arguments \cite{Pa2010,Ni2011,Fe2013}, we realize a soliton metric for the fixed bracket and three-form.

\begin{mainresult}\label{thmC}
For every pair of integers $m>d\geq1$, there exists an indecomposable real three-step nilpotent Lie algebra of dimension $6m+d$ which admits no classical nilsoliton but admits a generalized nilsoliton with nonzero torsion. Consequently, such an algebra exists in every dimension $n\geq43$.
\end{mainresult}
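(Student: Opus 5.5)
The plan is to build, for each pair $m>d\geq1$, one explicit indecomposable three-step nilpotent Lie algebra $\nalg_{m,d}$ of dimension $6m+d$ together with a nice basis and a three-form $H_0$ that satisfy the hypotheses of the diagonal existence criterion stated above. That criterion says a diagonal generalized nilsoliton with torsion $H_0$ exists if and only if $Gx=\one$ has a strictly positive solution. Nonexistence of a classical nilsoliton will then come from the Nikolayevsky-type obstruction, namely the pre-Einstein derivation or the analogous convexity criterion $U_\mu x=\one$ without torsion.

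Concretely, I would glue $m$ copies of a six-dimensional building block, each carrying a "triple" $E^{ijk}$ of $H_0$ spanning a subalgebra with complementary subalgebra. I would add a $d$-dimensional part of central or second-layer vectors that ties the blocks together, giving indecomposability and three-step nilpotency. The bracket should be chosen so that:
\begin{itemize}
\item the basis is nice, so that $\Ric_\mu$ is diagonal by \cite{LaWi2013};
\item distinct torsion triples meet in at most one index;
\item each triple $I$ and its complement span subalgebras, which makes $H_0$ harmonic for all diagonal metrics.
\end{itemize}
The inequality $m>d$ is where the counting enters. There are more blocks than connecting vectors, so the bracket weight vectors alone are forced into a configuration whose classical system $U_\mu x=\one$ (equivalently, positivity of the pre-Einstein derivation) fails. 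The failure shows up as a pre-Einstein eigenvalue sign or ordering incompatible with nilsoliton existence \cite{Ni2011}. Adding the three-form weights of $H_0$ then enlarges the Gram matrix $G$ so that $\one$ lands in the interior of the cone needed for a positive solution.

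The order of steps is as follows. First, write the structure constants and verify the Jacobi identity, that the algebra is three-step nilpotent, and that it is indecomposable (for instance the center equals the $d$-dimensional part, and the derived algebra is not split by any ideal decomposition). Second, check the three hypotheses of the criterion for $H_0$, including $d_\mu H_0=0$. Third, compute the combined weight Gram matrix $G$ in \eqref{eq:combined-weight-data} and exhibit an explicit positive solution of $Gx=\one$. By symmetry among the $m$ blocks, I would look for a solution constant on each orbit, which reduces the system to a few unknowns depending on $m,d$. Fourth, prove the classical obstruction by computing the pre-Einstein derivation. Using the same symmetry, it is diagonal in the nice basis, so its eigenvalues can be found from $\tr(D\psi)=\tr\psi$ over diagonal derivations $\psi$. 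Then show that an eigenvalue is nonpositive, or that the weights violate the convexity condition, precisely when $m>d$. The final sentence of the statement follows because every $n\geq43$ can be written as $6m+d$ with $m\geq7$ and $1\leq d\leq6<m$.

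I expect the main obstacle to be the balancing act in the third and fourth steps. The same weight data must both defeat the torsion-free system and admit a positive solution once the torsion weights are added. I would first solve the reduced symmetric system symbolically in $m,d$, and choose the building block so that the torsion triple contributes a weight vector that repairs exactly the negative coordinate appearing in the classical solution.
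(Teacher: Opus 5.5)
\textbf{There is a genuine gap.} Your proposal never produces the object the theorem asserts exists: there is no Lie algebra, no $H_0$, and no positive solution of $Gx=\one_N$. Beyond that, the mechanism you sketch for the classical obstruction puts the hypothesis $m>d$ in the wrong place.

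\emph{Where $m>d$ is not used.} In the paper's family, the six-dimensional blocks satisfy $[X_i,Y_i]=P_i$, $[Y_i,Z_i]=Q_i$, $[X_i,Q_i]=T_i$, $[Z_i,P_i]=-T_i$, and they are glued by $[X_i,Z_i]=v_i\in W$ with $\dim W=d$. These algebras admit no classical nilsoliton for every spanning configuration, including $m=d$; the seven-dimensional case $m=d=1$ is an example. The reason is not a count of blocks against connecting vectors:
\begin{itemize}
\item The grading derivation $\phi=\frac r2\Id$ on the $r$-th layer is pre-Einstein and positive.
\item It has eigenvalue exactly $1$ on $W\subset Z(\nalg)\cap[\nalg,\nalg]$.
\item A nilsoliton $\Ric=-c\Id+D$ has $D/c$ conjugate to $\phi$, so $\Ric$ would vanish on some nonzero central vector $w$ of the derived algebra.
\item This contradicts $g(\Ric w,w)=\frac14\sum_{a,b}g([e_a,e_b],w)^2>0$.
\end{itemize}
The sign test on pre-Einstein eigenvalues that you propose would not detect this obstruction. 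Your expectation that the classical system fails ``precisely when $m>d$'' is therefore aimed at the wrong target.

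\emph{Where $m>d$ is used, and why your framework stalls.} The hypothesis $m>d$ enters only through indecomposability, and this is where the nice-basis framework breaks down. In the natural gluing through the connecting space, niceness forces each bracket $[X_i,Z_i]$ onto a single basis line $\R w_k$. For $d\geq2$ the configuration then splits as $\R w_1\oplus\operatorname{span}\{w_2,\dots,w_d\}$, and the algebra becomes a direct sum of ideals. Your scheme can therefore reach only $d=1$ (all $v_i$ equal), which gives dimensions $6m+1$ and not every $n\geq43$.

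\emph{How the paper proceeds instead.} It abandons niceness:
\begin{itemize}
\item It takes the $v_i$ to be a unit-norm tight frame, $\sum_i v_i\otimes v_i=\frac md\Id_W$, whose graph ($i\sim j$ iff $\langle v_i,v_j\rangle_0\neq0$) is connected. Fourier frames provide these for all $m>d$. For $m=d\geq2$, every unit-norm tight frame is an orthonormal basis, whose graph has no edges.
\item It computes $\Ric$ directly. Tightness makes $\Ric|_W=m\Id_W$ scalar.
\item The explicit metric, with $|P_i|^2=|Q_i|^2=5m+4d$, $|T_i|^2=(5m+4d)(4m+3d)$ and $g|_W=2d\langle\cdot,\cdot\rangle_0$, together with $H=2\sqrt{m+d}\sum_i y_i^*\wedge p_i^*\wedge q_i^*$, gives $\lambda=-(10m+9d)$ and an explicit symmetric derivation.
\item Indecomposability is proved through idempotents in the centroid and linear indecomposability of the configuration. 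It is not read off from the center, which is $W\oplus\bigoplus_i\R T_i$ rather than the $d$-dimensional part.
\end{itemize}

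Your choices of one harmonic torsion triple per block and the final count $n=6m+d$ with $m=\lfloor(n-1)/6\rfloor$ are sound. What is missing is a gluing that is compatible with indecomposability for $d\geq2$, together with actual verification of both the soliton equations and the classical obstruction.
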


The seven-dimensional example uses the Lie algebra in \cite[Example~4]{Fe2012}. For the general family, unit-norm tight frames determine the Ricci operator. Derivation and centroid calculations exclude all classical nilsoliton metrics and prove indecomposability, respectively.

For a nontrivial generalized nilsoliton, the expanding property and derivation identity in \cite[Proposition~7.7]{FuLaSt2024} imply that the soliton derivation has positive trace. This excludes nonabelian characteristically nilpotent Lie algebras, whose derivations are all nilpotent \cite{Bu2002}. We obtain two further obstructions that hold for every choice of metric and closed three-form.

\begin{mainresult}\label{thmD}
Let $\nalg$ be a nonabelian real nilpotent Lie algebra.
\begin{enumerate}
\item Suppose that $W=[\nalg,\nalg]$ is abelian, and set $r=\dim W$. If $(g,H)$ is a generalized nilsoliton with $c=-\lambda>0$ and soliton derivation $D$, then
\begin{equation}
-3\tr(D^2)+c\bigl(3\tr D+2\tr(D|_W)\bigr)-2rc^2>0.
\end{equation}
Consequently, $\bigl(3\tr D+2\tr(D|_W)\bigr)^2>24r\tr(D^2)$.
\item If every derivation of $\nalg$ that is diagonalizable over $\mathbb R$ induces the zero map on $\nalg/Z(\nalg)$, then $\nalg$ admits no generalized nilsoliton, including the case $H=0$.
\end{enumerate}
\end{mainresult}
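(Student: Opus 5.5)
The plan is to work entirely with trace identities for the soliton equations, using harmonicity of $H$ from the nilpotent case (\cite[Theorem~1.1]{ChChZh2026}, or Corollary~\ref{coroB}, since $B_\mu=0$ on a nilpotent algebra). With $\Delta_\mu H=0$ the system becomes
\begin{equation*}
R:=\Ric_\mu-\tfrac14H^2=\lambda\Id+D,\qquad \rho(D)H=2\lambda H .
\end{equation*}
Three standard facts on a nilpotent metric Lie algebra will be used:
\begin{itemize}
\item $\Ric_\mu=M_\mu$ and $\tr\Ric_\mu=-\tfrac14|\mu|^2$;
\item $\tr(\Ric_\mu E)=0$ for every derivation $E$, since $\Ric_\mu$ is the moment map and $\delta_\mu(E)=0$;
\item $H^2\geq0$ with $\tr H^2=3|H|^2$.
\end{itemize}
Pairing $\rho(D)H=2\lambda H$ with $H$ gives $\tfrac12\tr(DH^2)=-2\lambda|H|^2$, up to the normalization of $\rho$ and of $H^2$, which I will fix from the definitions. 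Taking the trace of $R=\lambda\Id+D$ against $\Id$ and against $D$ then yields two scalar identities:
\begin{itemize}
\item one expressing $\tr D^2+\lambda\tr D$ through $|H|^2$;
\item one expressing $n\lambda+\tr D$ through $|\mu|^2$ and $|H|^2$.
\end{itemize}
These two identities should be exactly the content of the expanding property of \cite[Proposition~7.7]{FuLaSt2024}, and they are the backbone of both parts.

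For part (1), write $c=-\lambda>0$ and split $\nalg=V\oplus W$ with $V=W^\perp$. Since $W$ is abelian and $D(W)\subset W$, the $W$-block of $\Ric_\mu$ has a standard form:
\begin{itemize}
\item it equals $\tfrac14\sum\ad$-type positive terms, with trace $\tfrac14|\mu|^2$;
\item the $V$-block has trace $-\tfrac12|\mu|^2$.
\end{itemize}
I will take the trace of the soliton equation over $W$ alone, giving $\tr(R|_W)=-rc+\tr(D|_W)$, and eliminate $|\mu|^2$ and $|H|^2$ using the global identities. The remaining quantities are then bounded by Cauchy--Schwarz on the $r$-dimensional block: $\tr\bigl((D|_W)^2\bigr)\geq(\tr D|_W)^2/r$ together with $\tr(D|_W)^2\leq\tr D^2$. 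I also use positivity of $H^2$ and the fact that its trace on $W$ is controlled by $|H|^2$.

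The target is to combine everything into the strict inequality $-3\tr D^2+c(3\tr D+2\tr D|_W)-2rc^2>0$, where strictness comes from $\mu\neq0$. Getting the exact coefficients $3,2,2r$ is the main obstacle. It requires a careful choice of which identities to weight and by how much, and I expect the $W$-block trace of $H^2$ to need a separate estimate, using that $d_\mu H=0$ forces components of $H$ with two legs in $W$ to be constrained.

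The consequence then follows by an elementary argument. The left side is a quadratic in $c$ with value $-3\tr D^2<0$ at $c=0$ and negative leading coefficient. If it is positive at some $c>0$, its discriminant is positive, which gives $(3\tr D+2\tr D|_W)^2>24r\tr D^2$.

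For part (2), note that $D=R-\lambda\Id$ is symmetric, hence real-diagonalizable, so by hypothesis it induces zero on $\nalg/Z(\nalg)$, i.e.\ $D(\nalg)\subset Z(\nalg)$. Since derivations preserve the center and $D$ is symmetric, $D$ preserves $Z^\perp$, so $D|_{Z^\perp}=0$. The derivation property then gives $D[X,Y]=[DX,Y]+[X,DY]=0$ for $X,Y\in Z^\perp$, so $D$ vanishes on $Z^\perp+[\nalg,\nalg]$.

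Next I will show $D=0$. The nonzero part of $D$ lives on $Z\cap[\nalg,\nalg]^\perp$, which consists of central directions orthogonal to all brackets. On such directions $\Ric_\mu$ vanishes, so there $R=-\tfrac14H^2\leq0$ and $D=-\lambda-\tfrac14H^2$ on that block. I plan to feed this block description into the identity $\tr D^2+\lambda\tr D=(\text{multiple of})\,|H|^2$ and into $\rho(D)H=2\lambda H$ to force $D=0$; this is the step I am least sure of.

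With $D=0$ the argument closes by cases:
\begin{itemize}
\item if $H\neq0$, then $2\lambda H=0$ gives $\lambda=0$, so $\Ric_\mu=\tfrac14H^2\geq0$, contradicting $\tr\Ric_\mu=-\tfrac14|\mu|^2<0$;
\item if $H=0$, then $\Ric_\mu=\lambda\Id$, and nonabelian nilpotent algebras are never Einstein, a contradiction.
\end{itemize}
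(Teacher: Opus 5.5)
Both parts of your proposal have a genuine gap.

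\textbf{Part (1).} Your block traces of $\Ric_\mu$ hold only in the two-step case.
\begin{itemize}
\item For $w\in W$, the negative term $-\frac12\sum_i|[w,e_i]|^2$ of $M_\mu$ is nonzero whenever $[W^\perp,W]\neq0$. For example, $[E_1,E_3]=E_4$ in $\mathfrak m_2(n)$.
\item So $\tr(P_W\Ric)\neq\frac14|\mu|^2$ in general. Eliminating $|\mu|^2$ through the global trace identity with your formula would produce a false relation.
\item The correct statement: for an orthonormal basis $u_1,\dots,u_s$ of $W^\perp$, the mixed terms $|[u_i,w_a]|^2$ cancel between the two parts of $M_\mu$, leaving $\tr(P_W\Ric)=\frac14\sum_{i,j}|[u_i,u_j]|^2$.
\item Strict positivity does not follow from $\mu\neq0$; you need nilpotency. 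If $[W^\perp,W^\perp]=0$, then $W=[\nalg,W]$, so $W$ lies in every term of the lower central series and must vanish.
\end{itemize}
With this in hand, the weighting you could not find is simple. You need neither Cauchy--Schwarz nor any constraint from $d_\mu H=0$. Cauchy--Schwarz in fact points the wrong way: it bounds $\tr(D^2)$ from below and $\tr(D|_W)$ from above, the opposite of what the target needs. With full contractions, $\tr(H^2)=|H|^2$ and $-3\tr(H^2D)=2\lambda|H|^2$. Pairing the metric equation with $D$ gives $c\tr D-\tr(D^2)=\frac c6\tr(H^2)$. The trace over $W$ gives $\tr(D|_W)-rc=\tr(P_W\Ric)-\frac14\tr(P_WH^2)$. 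Three times the first plus $2c$ times the second is the exact identity
\[
-3\tr(D^2)+c\bigl(3\tr D+2\tr(D|_W)\bigr)-2rc^2=2c\tr(P_W\Ric)+\frac c2\sum_{i=1}^s|\iota_{u_i}H|^2 .
\]
Here the $W$-block of $H^2$ has cancelled, so no separate estimate for it is needed. Your discriminant argument for the consequence is fine.

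\textbf{Part (2).} Your reduction is correct up to three facts: $\im D\subset Z(\nalg)\cap[\nalg,\nalg]^\perp$, $\Ric_\mu=0$ on $\im D$, and $D|_{\im D}=c\Id-\frac14H^2|_{\im D}$.

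Your plan to force $D=0$ from the $D$-paired trace identity and $\rho(D)H=2\lambda H$ cannot succeed. These never see the metric equation on $K=\ker D$. All of them, and your block description, hold for the abelian soliton placed beside an arbitrary $K$: take $D=0\oplus\Id_{\R^3}$, $H=e^{123}$ on the $\R^3$ factor, and $c=\frac32$. Adding the $\Id$-paired identity only fixes $|\mu|^2$, which you can arrange by scaling the bracket of $K$.

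The missing argument uses the $K$-block.
\begin{itemize}
\item All eigenvalues of $D$ are at most $c$: they are zero on $K$, and at most $c$ on $\im D$ since $H^2\geq0$.
\item If $Dv=cv$, then $|\iota_vH|^2=0$.
\item Take an orthonormal eigenbasis $De_i=\delta_ie_i$ and suppose $H(e_i,e_j,e_k)\neq0$ with $e_i\in K$. The torsion equation forces $\delta_j+\delta_k=2c$, hence $\delta_j=\delta_k=c$ and $\iota_{e_j}H=0$, a contradiction.
\item So $H$ has no legs in $K$. Then $\nalg=K\oplus\im D$ is an orthogonal direct sum of ideals with $[K,K]=[\nalg,\nalg]\neq0$.
\item The metric equation on $K$ now reads $\Ric_K=-c\Id_K$. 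This is impossible, because $g(\Ric_Kz,z)=\frac14\sum_{a,b}g([f_a,f_b],z)^2\geq0$ for a nonzero central $z\in K$.
\end{itemize}
The contradiction comes directly from the nonabelian block, not from first proving $D=0$.
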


The first assertion uses the positive partial Ricci trace over $W$. The second follows from the metric and torsion equations under the decomposition $\nalg=\ker D\oplus\im D$. It also excludes generalized nilsolitons on $\mathfrak c\oplus\mathbb R^k$ for every nonabelian characteristically nilpotent Lie algebra $\mathfrak c$ and every $k\geq0$. For $k\geq1$, the derivation $0\oplus\Id_{\mathbb R^k}$ has positive trace, so these algebras are not excluded by the positive-trace requirement alone.

We apply these methods to determine which members of the positively graded filiform family $\mathfrak m_2(n)$ admit generalized nilsolitons. For $n\geq5$, the nonzero brackets are
\begin{equation}\label{eq:m2-introduction}
[E_1,E_i]=E_{i+1}\quad(2\leq i\leq n-1), \quad [E_2,E_j]=E_{j+2}\quad(3\leq j\leq n-2).
\end{equation}
These algebras admit no classical nilsoliton for $n\geq7$ \cite{Pa2010,Ni2008}. In the generalized nilsoliton setting, we obtain existence in two additional dimensions, namely seven and eight.

\begin{mainresult}\label{thm:filiform-threshold-introduction}
For $n\geq5$, the Lie algebra $\mathfrak m_2(n)$ admits a generalized nilsoliton if and only if $5\leq n\leq8$. In each of these dimensions, there exists a generalized nilsoliton with nonzero torsion.
\end{mainresult}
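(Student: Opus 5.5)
The proof has two halves: nonexistence for $n\geq9$, obtained from Theorem~\ref{thmD}(1), and existence for $5\leq n\leq8$, obtained from the diagonal Gram-matrix criterion.

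\emph{Nonexistence for $n\geq9$.} The derived algebra $W=[\mathfrak m_2(n),\mathfrak m_2(n)]=\mathrm{span}\{E_3,\dots,E_n\}$ is abelian, because every bracket involves $E_1$ or $E_2$. Hence Theorem~\ref{thmD}(1) applies with $r=n-2$.

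First I would compute $\Der(\mathfrak m_2(n))$. For $n\geq5$ every derivation that is diagonalizable over $\mathbb R$ is conjugate to one of the form $D=a\,\diag(1,2,3,\dots,n)$. This follows because the maximal torus is one-dimensional, with grading $E_i\mapsto i$.

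The soliton derivation $D$ need not be diagonalizable. I would take its semisimple part, or argue that $\tr D$, $\tr(D|_W)$ and $\tr(D^2)$ depend only on the eigenvalues of $D$. All eigenvalues are real, since on this algebra they are forced to be $a\cdot i$ with $a$ real. So we may take $D$ with eigenvalues $a,2a,\dots,na$. Then
\[
\tr D=a\,\frac{n(n+1)}2,\qquad \tr(D|_W)=a\Bigl(\frac{n(n+1)}2-3\Bigr),\qquad \tr D^2=a^2\,\frac{n(n+1)(2n+1)}6 .
\]
Substituting into the consequence $\bigl(3\tr D+2\tr(D|_W)\bigr)^2>24r\tr(D^2)$ gives
\[
\bigl(\tfrac52 n(n+1)-6\bigr)^2>4(n-2)\,n(n+1)(2n+1).
\]
The left side has leading term $\tfrac{25}4n^4$ and the right side $8n^4$, so the inequality fails for large $n$. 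I would check by direct evaluation that it already fails at $n=9$. If it does not fail at $n=9$, I would fall back on the sharper inequality
\[
-3\tr(D^2)+c\bigl(3\tr D+2\tr(D|_W)\bigr)-2rc^2>0,
\]
maximized over $c$, possibly combined with the normalization linking $c$ and $a$ from \cite[Proposition~7.7]{FuLaSt2024}, such as $\tr D^2=c\tr D$ or its torsion-corrected version. This threshold calculation, namely matching the sign change exactly between $8$ and $9$, is the step I expect to be the main obstacle. I would also check whether a finer partial Ricci trace estimate is needed to close the gap.

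\emph{Existence for $5\leq n\leq8$.} For $n=5,6$ the algebra admits a classical nilsoliton. I would add a small nonzero torsion via the diagonal criterion, or simply take the classical one; but the theorem claims nonzero torsion in every dimension, so the criterion is needed for all $n$. In each of $n=5,\dots,8$, I would choose a closed three-form $H_0=\sum h_IE^I$ supported on triples $I$ satisfying the hypotheses of the criterion: $\mathrm{span}\{E_i:i\in I\}$ and its complement are subalgebras, and distinct triples meet in at most one index. A natural candidate is a triple built from the top of the grading, for example $I=\{n-2,n-1,n\}$, whose span is an abelian ideal. Its complement $\{1,\dots,n-3\}$ is not closed under the bracket, though, so I would search among triples such as $\{2,k,k+2\}$ or use several weighted triples. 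For each candidate I would form the Gram matrix $G$ of the bracket weights and three-form weights from \eqref{eq:combined-weight-data}, solve $Gx=\mathbf1$, and verify that $x>0$ by explicit computation. For $n\geq7$, adding the three-form weights is exactly what rescues positivity, which fails for the bracket weights alone.

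Finally, I would confirm that $dH_0=0$ and that the resulting data define a generalized nilsoliton with $H\neq0$.
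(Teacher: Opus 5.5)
Your plan has two genuine gaps, one in each half.

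\emph{Dimension nine is not excluded.} Put $L=3\tr D+2\tr(D|_W)$. The consequence $L^2>24r\tr(D^2)$ of Theorem~\ref{thmD}(1) still holds at $n=9$. With eigenvalues $t,2t,\dots,9t$ you get $L=219t$ and $24r\tr(D^2)=168\cdot285\,t^2=47880\,t^2<47961\,t^2=L^2$. So this test only rules out $n\geq10$.

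Your fallbacks do not close the case $n=9$. Maximizing $-3\tr(D^2)+cL-2rc^2$ over $c$ reproduces exactly the same discriminant. The trace identity with torsion is $\tr(D^2)=c(\tr D-|H|^2/6)$. It fixes $c=19t/3$ only when $H=0$, and that case is indeed excluded, since the sharper inequality requires $c/t\in(15/2,57/7)$. For $H\neq0$, the unknown $|H|^2$ lets $c/t$ be any value in $(19/3,\infty)$, which meets that window.

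The missing idea is that the torsion equation quantizes $c$. By Proposition~\ref{prop:CE-spectral-prerequisite}, $[H]\neq0$ in $H^3_{\mathrm{CE}}$ and $\rho(D)[H]=-2c[H]$. By Lemma~\ref{lem:m2-arbitrary-derivations}, the eigenvalues of $\rho(D)$ on $H^3_{\mathrm{CE}}$ are $-tq$ for the weights $q$ with $H^3_{\mathrm{CE},q}\neq0$. Hence $q=2c/t$ is an integer. Substituting $c=tq/2$ into the sharper inequality gives $15<q<114/7$, so $q=16$. You then still need the computation of Lemma~\ref{lem:m2-nine-cohomology}: every closed weight-sixteen three-form on $\mathfrak m_2(9)$ is a multiple of $d_\mu E^{79}$, hence exact.

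\emph{The existence strategy cannot produce torsion.} The diagonal Gram criterion requires both $V_I$ and $V_{I^c}$ to be subalgebras for each triple $I$, and no triple in $\mathfrak m_2(n)$ satisfies this.
\begin{enumerate}
\item If $1,2\notin I$, then $V_{I^c}$ contains the generators $E_1,E_2$, so it is the whole algebra.
\item If $1,2\in I$, then $I=\{1,2,3\}$, but $[E_1,E_3]=E_4\notin V_I$.
\item If only $1\in I$, closure under $\ad_{E_1}$ forces $I=\{1,n-1,n\}$, and then $[E_2,E_{n-2}]=E_n\notin V_{I^c}$.
\item If only $2\in I$, closure of $V_{I^c}$ under $\ad_{E_1}$ forces $I=\{2,3,4\}$, and then $[E_2,E_3]=E_5\notin V_I$.
\end{enumerate}
More generally, one can check that $\mathfrak m_2(n)$ has no three-dimensional subalgebra with a complementary subalgebra. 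By Lemma~\ref{lem:harmonic-subalgebra-support}, no nonzero decomposable three-form is then harmonic for any metric, so switching to another nice basis does not help.

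The criterion therefore collapses to the classical case $H_0=0$. That gives zero torsion for $n=5,6$ and fails for $n=7,8$, so nothing supports your claim that the three-form weights rescue positivity.

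The paper instead writes down explicit diagonal metrics together with sums of several decomposable terms, none harmonic on its own. Their closedness and coclosedness come from metric-dependent cancellations:
\begin{enumerate}
\item for $n=5$, the vanishing pairing with $d_\mu e^{35}$;
\item for $n=6$, cancellations in $d_\mu H$;
\item for $n=7$, the relation $BP=CQ$;
\item for $n=8$, the conditions $g_1=g_2$ and $g_7=g_8$, together with compatibility relations evaluated at a root of a degree-seven polynomial.
\end{enumerate}
It then checks $\Ric-\frac14H^2=\lambda\Id+t\phi$ entry by entry. Because harmonicity now depends on the metric, the convexity argument is unavailable, and the coupled system has to be solved directly.
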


We give explicit solutions in dimensions five, six and seven. In dimension eight, an explicit polynomial determines the parameters. We prove the existence of the required root and verify positivity and the soliton equations. For $n\geq10$, the inequality for Lie algebras with abelian derived algebra excludes all generalized nilsolitons. In dimension nine, this inequality and the torsion equation leave only one possible weight in the third Chevalley--Eilenberg cohomology, and we show that the corresponding weight space is zero. Thus, nonexistence holds for every metric and closed three-form. 

This paper is organized as follows. In Section~\ref{sec:algebraic-setup}, we fix the conventions. In Section~\ref{sec:ordinary-harmonicity}, we prove the identity involving the Killing form and the harmonicity criterion. Section~\ref{sec:weights} gives criteria for harmonic decomposable forms and diagonal soliton metrics. Section~\ref{sec:tight-frame-existence} presents the seven-dimensional example and the general constructions. In Section~\ref{sec:intrinsic-obstructions}, we establish the nonexistence criteria. Finally, in Section~\ref{sec:filiform-threshold}, we combine these criteria with explicit constructions to determine the existence range for $\mathfrak m_2(n)$.

\subsection*{Declaration of AI use}
During the preparation of this manuscript, the authors used GPT-5.6 Sol and GPT-6 Astra through OpenAI Codex to assist with manuscript organization, language editing, LaTeX preparation, and mathematical verification. These models also contributed to the development of proofs of some propositions. All AI-generated proofs have been checked in detail and verified by the authors. The authors take full responsibility for all mathematical claims and the final content of the manuscript.

\section{Definitions, algebraic framework and conventions}
\label{sec:algebraic-setup}

In this section, we fix the tensor and codifferential conventions, recall the algebraic soliton equations, and record the variation and Ricci identities used in the proofs.

Let $G$ be a simply connected Lie group with Lie algebra $(\glyg,\mu)$ and a left-invariant metric $g$. Write $\bar g=g_e$ and identify left-invariant tensors with their values at the identity.

For $\omega,\tau\in\Lambda^k\glyg^*$, set $\bar g(\omega,\tau)=\sum_{i_1,\ldots,i_k}\omega_{i_1\cdots i_k}\tau_{i_1\cdots i_k}$ and $|\omega|^2=\bar g(\omega,\omega)$, where the components are taken in a $\bar g$-orthonormal basis. This full contraction satisfies $\bar g(\omega,\tau)=k!\langle\omega,\tau\rangle_{\wedge}$, where $\langle\cdot,\cdot\rangle_{\wedge}$ is the usual exterior-form inner product. For the dual basis, write $e^{i_1\cdots i_k}=e^{i_1}\wedge\cdots\wedge e^{i_k}$. For endomorphisms, $A^*=A^t$ denotes the $\bar g$-adjoint.

For a two-form $\alpha$, define $K_\alpha$ by $\alpha(X,Y)=\bar g(K_\alpha X,Y)$. Then $K_\alpha^*=-K_\alpha$ and $K_\alpha^*K_\alpha=-K_\alpha^2$. For a three-form $H$, define $H^2$ by $\bar g(H^2X,Y)=\bar g(\iota_XH,\iota_YH)$. In an orthonormal basis,
\begin{equation}\label{eq:tensor-square-components}
(K_\alpha^*K_\alpha)_{ij}=\sum_p\alpha_{ip}\alpha_{jp},
\qquad
(H^2)_{ij}=\sum_{p,q}H_{ipq}H_{jpq}.
\end{equation}
In particular, $\tr(K_\alpha^*K_\alpha)=|\alpha|^2$ and $\tr(H^2)=|H|^2$. We also write $H^2$ for the corresponding symmetric two-tensor and set $\ker H=\{X\in\glyg:\iota_XH=0\}$.

The actions of $h\in\mathrm{GL}(\glyg)$ on forms and brackets are $h\cdot\omega=(h^{-1})^*\omega$ and $(h\cdot\mu)(X,Y)=h\mu(h^{-1}X,h^{-1}Y)$. Their differentials are
\begin{equation}\label{eq:representations}
\begin{aligned}
(\rho(A)\omega)(X_1,\ldots,X_k)
&=-\sum_{r=1}^k\omega(X_1,\ldots,AX_r,\ldots,X_k),\\
(\theta(A)\mu)(X,Y)
&=A\mu(X,Y)-\mu(AX,Y)-\mu(X,AY).
\end{aligned}
\end{equation}
In particular, $\rho(\Id)\omega=-k\omega$ for $\omega\in\Lambda^k\glyg^*$, and $\theta(\Id)\mu=-\mu$.

\begin{lemma}\label{lem:representation-contractions}
Let $A\in\Sym(\glyg)$. For $\alpha\in\Lambda^2\glyg^*$, we have $\bar g(\rho(A)\alpha,\alpha)=-2\tr(K_\alpha^*K_\alpha A)$. For $H\in\Lambda^3\glyg^*$, we have $\bar g(\rho(A)H,H)=-3\tr(H^2A)$.
\end{lemma}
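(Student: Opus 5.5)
The plan is a direct computation in a $\bar g$-orthonormal basis $e_1,\ldots,e_n$. Because $\bar g(\rho(A)\omega,\omega)$ is linear in $A$ and invariant under orthogonal changes of basis, one could diagonalize $A$. However, the component computation works directly for any $A$, and symmetry of $A$ is used only once, at the end.

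For $\alpha\in\Lambda^2\glyg^*$, the definition \eqref{eq:representations} gives the components
\[
(\rho(A)\alpha)_{ij}=-\alpha(Ae_i,e_j)-\alpha(e_i,Ae_j)=-\sum_p\bigl(A_{pi}\alpha_{pj}+A_{pj}\alpha_{ip}\bigr),
\]
where $Ae_i=\sum_pA_{pi}e_p$. Contracting with $\alpha_{ij}$ and summing over $i,j$ yields two sums. After relabeling indices and using the antisymmetry $\alpha_{pj}\alpha_{ij}=\alpha_{jp}\alpha_{ji}$, both sums equal $\sum_{i,p,j}A_{pi}\alpha_{ip}'$-type contractions of the form $\sum_{i,p}A_{pi}\sum_j\alpha_{pj}\alpha_{ij}$. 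By \eqref{eq:tensor-square-components}, the inner sum is $(K_\alpha^*K_\alpha)_{pi}$. Each sum is therefore $\sum_{p,i}A_{pi}(K_\alpha^*K_\alpha)_{pi}=\tr(A^tK_\alpha^*K_\alpha)=\tr(K_\alpha^*K_\alpha A)$, using $A^t=A$ and the symmetry of $K_\alpha^*K_\alpha$. Together with the overall minus sign, this gives $-2\tr(K_\alpha^*K_\alpha A)$.

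The three-form case is the same computation with three slots. We have
\[
(\rho(A)H)_{ijk}=-\sum_p\bigl(A_{pi}H_{pjk}+A_{pj}H_{ipk}+A_{pk}H_{ijp}\bigr).
\]
Contracting with $H_{ijk}$, each of the three terms becomes $\sum_{p,i}A_{pi}\sum_{j,k}H_{pjk}H_{ijk}$ once indices are relabeled and total antisymmetry is used to move the distinguished slot to the front. The inner sum is $(H^2)_{pi}$, so each term contributes $\tr(H^2A)$, and the total is $-3\tr(H^2A)$.

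The only point requiring care is bookkeeping, not a genuine obstacle. One must check that moving the distinguished slot to the first position produces an even number of sign changes in each product. This holds because the same permutation is applied to both factors, so the signs cancel. One must also keep track of the fact that $\sum_pA_{pi}e_p$ contracts as the transpose, which is harmless since $A$ is symmetric.
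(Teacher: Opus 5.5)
Your argument is correct, but it takes a slightly different route from the paper. The paper chooses an orthonormal eigenbasis $Ae_i=a_ie_i$, which uses the spectral theorem and hence symmetry of $A$. In that basis $\rho(A)$ acts diagonally on components, $(\rho(A)\alpha)_{ij}=-(a_i+a_j)\alpha_{ij}$ and $(\rho(A)H)_{ijk}=-(a_i+a_j+a_k)H_{ijk}$. The full contraction then gives $-2\sum_ia_i(K_\alpha^*K_\alpha)_{ii}$ and $-3\sum_ia_i(H^2)_{ii}$ at once.

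You instead work in an arbitrary orthonormal basis and reduce every term to $\sum_{p,i}A_{pi}M_{pi}$, with $M=K_\alpha^*K_\alpha$ or $M=H^2$. The paper's version is shorter, but yours proves more than you claim. Since $\sum_{p,i}A_{pi}M_{pi}=\tr(A^{\mathsf T}M)=\tr(M^{\mathsf T}A)$, only the symmetry of $M$ is needed to reach $\tr(MA)$. The hypothesis $A^{\mathsf T}=A$ is never actually used, so your computation establishes both identities for every $A\in\mathfrak{gl}(\glyg)$.

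That extra generality has a concrete payoff later in the paper. In the proof of the energy decomposition, the paper first discards the skew-adjoint part of $\ad_\mu U_\mu$ before applying the lemma to $S_\mu$. With your version, the lemma could be applied to $\ad_\mu U_\mu$ directly.

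One cosmetic point: the phrase ``$\alpha_{ip}'$-type contractions'' in your two-form paragraph is garbled, although the formula that follows it is the right one.
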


\begin{proof}
Choose an orthonormal eigenbasis $Ae_i=a_i e_i$. The components of $\rho(A)\alpha$ and $\rho(A)H$ are $-(a_i+a_j)\alpha_{ij}$ and $-(a_i+a_j+a_k)H_{ijk}$. Full contraction gives the two identities.
\end{proof}

On invariant forms, the exterior differential is the Chevalley--Eilenberg differential $d_\mu$, determined by $(d_\mu\xi)(X,Y)=-\xi(\mu(X,Y))$ for $\xi\in\glyg^*$ and the graded Leibniz rule. The Jacobi identity gives $d_\mu^2=0$. Let $\CEadj$ be the adjoint of $d_\mu$ with respect to $\langle\cdot,\cdot\rangle_{\wedge}$. For $\alpha\in\Lambda^p\glyg^*$ and $\beta\in\Lambda^{p+1}\glyg^*$, the full-contraction normalization gives
\begin{equation}\label{eq:CE-adjoint-pairing}
\bar g(\CEadj\beta,\alpha)
=\frac1{p+1}\bar g(\beta,d_\mu\alpha).
\end{equation}

Define $U_\mu$ by $\bar g(U_\mu,X)=\tr(\ad_\mu X)$, where $\ad_\mu X(Y)=\mu(X,Y)$. Write $\geomcod$ for the Riemannian codifferential restricted to left-invariant forms. Then \cite[equation~(4)]{FuLaSt2026} gives
\begin{equation}\label{eq:geometric-codifferential-pairing}
\bar g(\geomcod\beta,\alpha)
=\frac1{p+1}\bar g(\beta,d_\mu\alpha)
+\bar g(\iota_{U_\mu}\beta,\alpha).
\end{equation}
Hence $\geomcod=\CEadj+\iota_{U_\mu}$. The two codifferentials agree on all invariant forms if and only if $U_\mu=0$, equivalently if the Lie algebra is unimodular. Both satisfy $(\CEadj)^2=(\geomcod)^2=0$.

We use the geometric Laplacian $\Delta_\mu=-(d_\mu\geomcod+\geomcod d_\mu)$. An invariant form $\omega$ is harmonic if $d_\mu\omega=0$ and $\geomcod\omega=0$. On a non-unimodular Lie algebra, $\Delta_\mu\omega=0$ alone need not imply harmonicity.

To compute the variation of the geometric codifferential, keep $\bar g$ fixed and let $\nu$ be any skew-symmetric bilinear map. Define $d_\nu$ by the same formula on one-forms and the graded Leibniz rule, and set $\bar g(U_\nu,X)=\tr(Y\mapsto\nu(X,Y))$. With $d_\nu^\dagger$ denoting the exterior-form adjoint, put $d_{g,\nu}^*=d_\nu^\dagger+\iota_{U_\nu}$. These operators depend linearly on $\nu$, so they are defined even when $\nu$ does not satisfy the Jacobi identity.

For $A\in\mathfrak{gl}(\glyg)$, differentiating the equivariance of $d_\mu$ gives $d_{\theta(A)\mu}=[\rho(A),d_\mu]$, where $[P,Q]=P\circ Q-Q\circ P$. Since $\rho(A)^*=\rho(A^*)$ on each exterior power, taking adjoints gives $d_{\theta(A)\mu}^\dagger=[\CEadj,\rho(A^*)]$. Moreover, $\ad_{\theta(A)\mu}X=[A,\ad_\mu X]-\ad_\mu(AX)$ implies $U_{\theta(A)\mu}=-A^*U_\mu$. The slot action of $\rho$ also gives $[\iota_{U_\mu},\rho(A^*)]=-\iota_{A^*U_\mu}$. Combining these identities, we obtain
\begin{equation}\label{eq:geometric-codifferential-variation}
d_{g,\theta(A)\mu}^{*}
=[\CEadj,\rho(A^*)]-\iota_{A^*U_\mu}
=[\geomcod,\rho(A^*)].
\end{equation}
This is the identity in \cite[equation~(36)]{FuLaSt2024}, with the contribution of $U_\mu$ included explicitly.

The soliton equations use the geometric Laplacian $\Delta_\mu$. We follow the convention of \cite[Section~3]{FuLaSt2024}, which permits simultaneous scaling of the metric and torsion. For a closed three-form $H$, the symmetric Ricci tensor of the metric connection with torsion $H$ is $\Ric_g-\frac14H^2$ (see \cite[Section~2.3]{FuLaSt2024}). Raising an index, set $\RicB_{\mu,H}=\Ric_\mu-\frac14H^2$.

\begin{definition}[{\cite[Proposition~7.4]{FuLaSt2024}}]
Let $H\in\Lambda^3\glyg^*$ satisfy $d_\mu H=0$. With $\bar g$ fixed, $(\mu,H)$ is an algebraic generalized Ricci soliton if there exist $\lambda\in\R$ and $D\in\Der(\mu)$ such that
\begin{equation}\label{eq:soliton-system}
\left\{
\begin{aligned}
\RicB_{\mu,H}&=\lambda\Id+D,\\
\Delta_\mu H&=-2\lambda H+\rho(D)H.
\end{aligned}
\right.
\end{equation}
\end{definition}

The first equation makes $D$ self-adjoint. Since $\rho(\Id)H=-3H$, the second is equivalent to $\Delta_\mu H=\lambda H+\rho(\RicB_{\mu,H})H$.

Since $G$ is simply connected, $D$ generates automorphisms $\varphi_s$ with $(d\varphi_s)_e=e^{sD}$. Their generator $X_D$ satisfies $\mathcal L_{X_D}g=2g(D\cdot,\cdot)$ and $\mathcal L_{X_D}H=-\rho(D)H$. Thus \eqref{eq:soliton-system} defines a geometric generalized Ricci soliton (see \cite[Proposition~3.4 and Lemma~7.5]{FuLaSt2024}).

The soliton is expanding, steady or shrinking according as $\lambda<0$, $\lambda=0$ or $\lambda>0$. We include the trivial pair $(\mu,H)=(0,0)$ with $\lambda=0$ and $D=0$.

To express the Ricci operator in terms of the bracket, define $M_\mu\in\Sym(\glyg)$ for any skew-symmetric bilinear map $\mu$ by
\begin{equation}\label{eq:moment-map-operator}
\bar g(M_\mu X,Y)
=-\frac12\sum_i\bar g\bigl(\mu(X,e_i),\mu(Y,e_i)\bigr)
+\frac14\sum_{i,j}\bar g\bigl(\mu(e_i,e_j),X\bigr)\bar g\bigl(\mu(e_i,e_j),Y\bigr),
\end{equation}
where $\{e_i\}$ is orthonormal. On bracket tensors, set $\bar g(\mu,\nu)=\sum_{i,j}\bar g(\mu(e_i,e_j),\nu(e_i,e_j))$ and $|\mu|^2=\bar g(\mu,\mu)$. Pairing the three terms of $\theta(A)\mu$ with $\mu$ gives
\begin{equation}\label{eq:moment-map-identity}
4\tr(M_\mu A)=\bar g(\theta(A)\mu,\mu),
\quad A\in\Sym(\glyg),
\end{equation}
with the normalization in \cite[Section~4.3]{FuLaSt2024}. Taking $A=\Id$ gives $\tr(M_\mu)=-|\mu|^2/4$. These identities do not require the Jacobi identity.

For a Lie bracket $\mu$, define the Killing operator by $\bar g(B_\mu X,Y)=\tr(\ad_\mu X\circ\ad_\mu Y)$ and set $S_\mu=\Sym(\ad_\mu U_\mu)$, where $\Sym(A)=(A+A^*)/2$. By \cite[Section~2.3]{La2013},
\begin{equation}\label{eq:general-Ricci-decomposition}
\Ric_\mu=M_\mu-\frac12B_\mu-S_\mu.
\end{equation}
For unimodular Lie algebras, $U_\mu=S_\mu=0$. For nilpotent Lie algebras, $B_\mu=0$ as well, so $\Ric_\mu=M_\mu$.

\section{The Killing-form identity and harmonicity criteria}
\label{sec:ordinary-harmonicity}

In this section, we express $\mathcal Q_\mu$ as a sum of squares and derive an identity involving the Killing form. We then prove the harmonicity criterion and its consequences for nilpotent and completely solvable Lie groups.

Let $(V,\bar g)$ be a Euclidean vector space and let $\mu\in\Lambda^2V^*\otimes V$. For $\alpha\in\Lambda^2V^*$, set 
\begin{equation*}
\mathcal Q_\mu(\alpha)=\frac16|d_\mu\alpha|^2-2\tr(K_\alpha^*K_\alpha M_\mu).
\end{equation*}
The estimate $\mathcal Q_\mu\geq0$ and its coordinate sum-of-squares proof are due to \cite[Lemma~3.1]{ChChZh2026}. The proof does not require the Jacobi identity, as noted in \cite[Remark~3.2]{ChChZh2026}. We give an invariant formula and describe its equality condition.

Write $T_\mu(X,Y,Z)=\bar g(\mu(X,Y),Z)$ and $K=K_\alpha$. Let $\mathcal K_r$ insert $K$ into the $r$-th slot of a covariant three-tensor, and set $(\mathsf S_{13}F)(X,Y,Z)=F(Z,Y,X)$. All tensor norms are full contractions. For endomorphisms, write $\Skew(A)=(A-A^*)/2$ and $\|A\|_{\mathrm{HS}}^2=\tr(A^*A)$.

\begin{lemma}
For every $\mu\in\Lambda^2V^*\otimes V$ and $\alpha\in\Lambda^2V^*$,
\begin{equation}\label{eq:moment-map-estimate}
\begin{aligned}
\mathcal Q_\mu(\alpha)
=\frac12\left|\mathcal K_1T_\mu-\mathsf S_{13}\mathcal K_1T_\mu\right|^2=2\sum_i\left\|\Skew\bigl(\ad_\mu(e_i)\circ K_\alpha\bigr)\right\|_{\mathrm{HS}}^2\geq0,
\end{aligned}
\end{equation}
where $\{e_i\}$ is any orthonormal basis and $\ad_\mu(Y)X=\mu(Y,X)$. Equality holds if and only if $\ad_\mu(Y)\circ K_\alpha$ is self-adjoint for every $Y\in V$.
\end{lemma}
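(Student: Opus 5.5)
I will verify the identity by a direct computation in a $\bar g$-orthonormal basis, writing $\mu_{ij}^k=T_\mu(e_i,e_j,e_k)$, $K=K_\alpha$ with $Ke_i=\sum_p K_{pi}e_p$, so that $\alpha_{ij}=K_{ji}$. The strategy is to expand the right-hand side $\frac12|\mathcal K_1T_\mu-\mathsf S_{13}\mathcal K_1T_\mu|^2=|\mathcal K_1T_\mu|^2-\bar g(\mathcal K_1T_\mu,\mathsf S_{13}\mathcal K_1T_\mu)$, where I use that $\mathsf S_{13}$ is an isometry and an involution. Then I identify each of the two resulting scalars with one of the two terms of $\mathcal Q_\mu(\alpha)$.

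\emph{Step 1 (the moment-map term).} Using \eqref{eq:moment-map-operator} and $\tr(K^*KM_\mu)=\sum_{a,b}(K^*K)_{ab}(M_\mu)_{ab}$, I compute $-2\tr(K^*KM_\mu)=\sum_i|\mu(Ke_\cdot,e_i)|^2$-type term minus $\frac12\sum_{i,j}|K^*(\mu(e_i,e_j))|^2$-type term. Precisely, the first part gives $\sum_{a,i}|\mu(Ka\text{-column},e_i)|^2=|\mathcal K_1T_\mu|^2$ (up to noting $|KX|$ summed over a basis is what appears), and the second gives $-\frac12\sum_{i,j}|K\mu(e_i,e_j)|^2$.

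\emph{Step 2 (the differential term).} For $\alpha\in\Lambda^2$, $d_\mu\alpha(X,Y,Z)=-\alpha(\mu(X,Y),Z)+\alpha(\mu(X,Z),Y)-\alpha(\mu(Y,Z),X)$, a cyclic sum of three terms, each of which is (up to slot permutation and sign) the tensor $F(X,Y,Z)=\alpha(\mu(X,Y),Z)$. Since $\frac16|d_\mu\alpha|^2$ is the squared norm of a sum of three permuted copies of $F$, expanding gives $\frac12|F|^2$ plus cross terms $\bar g(F,\sigma F)$. I expect $\frac12|F|^2$ to cancel the $-\frac12\sum|K\mu(e_i,e_j)|^2$ from Step 1 (note $|F|^2=\sum_{i,j}|K\mu(e_i,e_j)|^2$), and the cross terms, after using skew-symmetry of $\mu$ and of $K$, to collapse to $-\bar g(\mathcal K_1T_\mu,\mathsf S_{13}\mathcal K_1T_\mu)$. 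This index bookkeeping (tracking signs from $K^*=-K$ and the antisymmetry of $\mu$ under transposing slots) is the main obstacle; I will handle it by writing every term as a sum $\sum \mu_{ab}^c K_{\cdot\cdot}\mu_{de}^f K_{\cdot\cdot}$ and matching index patterns, and I would sanity-check on a two-dimensional example with one nonzero bracket.

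\emph{Step 3 (second form and equality).} Writing $(\mathcal K_1T_\mu)(X,Y,Z)=\bar g(\mu(KX,Y),Z)=-\bar g(\ad_\mu(Y)KX,Z)$, fixing the middle slot $Y=e_i$ turns $\mathcal K_1T_\mu-\mathsf S_{13}\mathcal K_1T_\mu$ into the matrix $-(A_i-A_i^*)=-2\Skew(A_i)$ with $A_i=\ad_\mu(e_i)\circ K$. Hence $\frac12|\cdot|^2=\frac12\sum_i\|2\Skew A_i\|_{\mathrm{HS}}^2=2\sum_i\|\Skew A_i\|_{\mathrm{HS}}^2\ge0$, independent of the orthonormal basis. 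Equality holds iff $\Skew(\ad_\mu(e_i)K)=0$ for all $i$, which by linearity in $Y$ is equivalent to $\ad_\mu(Y)\circ K_\alpha$ being self-adjoint for every $Y$. No Jacobi identity is used anywhere, consistent with the statement.
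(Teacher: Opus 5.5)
Your plan follows the paper's proof step for step. The moment-map term gives $|\mathcal K_1T_\mu|^2-\frac12|\mathcal K_3T_\mu|^2$: you read this off the definition of $M_\mu$, while the paper uses $4\tr(M_\mu A)=\bar g(\theta(A)\mu,\mu)$ with $A=K_\alpha^*K_\alpha$. The differential term gives $\frac12|F|^2+\bar g(F,\mathsf PF)$, where $(\mathsf PF)(X,Y,Z)=F(Y,Z,X)$ and $F=-\mathcal K_3T_\mu$; the three cross terms coincide because $\mathsf P$ is orthogonal and $\mathsf P^3$ is the identity. Your Step~3, including the equality case via linearity of $Y\mapsto\ad_\mu(Y)\circ K_\alpha$, is correct as written.

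What is missing is the one nontrivial step, which you announce (``I expect'') and yourself flag as the main obstacle. That step is the identity $\bar g(F,\mathsf PF)=-\bar g(\mathcal K_1T_\mu,\mathsf S_{13}\mathcal K_1T_\mu)$. Everything else is bookkeeping, so as written the proposal reduces the lemma to this identity but does not prove it.

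The identity does hold, and it follows quickly once you fix the middle slot instead of matching index patterns wholesale. Put $A_b=\ad_\mu(e_b)$ and $K=K_\alpha$. Then $F(e_a,e_b,e_c)=-\bar g(KA_be_a,e_c)$ and $F(e_b,e_c,e_a)=\bar g(KA_be_c,e_a)$, so $\bar g(F,\mathsf PF)=-\sum_b\tr\bigl((KA_b)^2\bigr)$. Similarly, $(\mathcal K_1T_\mu)(e_a,e_b,e_c)=-\bar g(A_bKe_a,e_c)$ gives $\bar g(\mathcal K_1T_\mu,\mathsf S_{13}\mathcal K_1T_\mu)=\sum_b\tr\bigl((A_bK)^2\bigr)$. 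Cyclicity of the trace, $\tr(KA_bKA_b)=\tr(A_bKA_bK)$, then gives the identity. This is exactly the content of the paper's slot identity $\mathcal K_3\mathsf P\mathcal K_3T=-\mathcal K_1\mathsf S_{13}\mathcal K_1T$. With this inserted, your argument is complete and uses no Jacobi identity.
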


\begin{proof}
Set $T=T_\mu$, $\mathcal U=\mathcal K_1T$ and $\mathcal W=\mathcal K_3T$. Since $K^*=-K$, we have $\mathcal K_r^*=-\mathcal K_r$. Apply \eqref{eq:moment-map-identity} with $A=K_\alpha^*K_\alpha=-K^2$. Lowering the output index in \eqref{eq:representations} gives $T_{\theta(A)\mu}=\mathcal K_3^*\mathcal K_3T-\mathcal K_1^*\mathcal K_1T-\mathcal K_2^*\mathcal K_2T$. As $T$ is skew-symmetric in its first two slots, $|\mathcal K_2T|=|\mathcal U|$. Therefore
\begin{equation}\label{eq:moment-map-trace-identity}
-2\tr(K_\alpha^*K_\alpha M_\mu)=|\mathcal U|^2-\frac12|\mathcal W|^2.
\end{equation}

Define $(\mathsf PF)(X,Y,Z)=F(Y,Z,X)$. Then $\mathsf P$ is orthogonal, $\mathsf P^3=\Id$ and $\mathsf P^*=\mathsf P^2$. The definition of $d_\mu$ and the identity $\alpha(v,Z)=-\bar g(v,KZ)$ give $d_\mu\alpha=\mathcal W+\mathsf P\mathcal W+\mathsf P^2\mathcal W$. Expanding the squared norm, we obtain
\begin{equation}\label{eq:two-form-differential-identity}
\frac16|d_\mu\alpha|^2=\frac12|\mathcal W|^2+\bar g(\mathcal W,\mathsf P\mathcal W).
\end{equation}

The skew-symmetry of $T$ gives $\mathcal K_3\mathsf P\mathcal K_3T=-\mathcal K_1\mathsf S_{13}\mathcal K_1T$. Evaluating at $(X,Y,Z)$, this is $T(Y,KZ,KX)=-T(KZ,Y,KX)$. Taking adjoints, we have
\begin{equation}\label{eq:mixed-term}
\bar g(\mathcal W,\mathsf P\mathcal W)=-\bar g(T,\mathcal K_3\mathsf P\mathcal K_3T)=\bar g(T,\mathcal K_1\mathsf S_{13}\mathcal K_1T)=-\bar g(\mathcal U,\mathsf S_{13}\mathcal U).
\end{equation}
Combining \eqref{eq:moment-map-trace-identity}, \eqref{eq:two-form-differential-identity} and \eqref{eq:mixed-term} gives $\mathcal Q_\mu(\alpha)=|\mathcal U|^2-\bar g(\mathcal U,\mathsf S_{13}\mathcal U)$. Since $\mathsf S_{13}$ is an orthogonal involution, we have $\mathcal Q_\mu(\alpha)=\frac12|\mathcal U-\mathsf S_{13}\mathcal U|^2$.

For $Y\in V$, set $A_Y=\ad_\mu(Y)\circ K_\alpha$. Then $\mathcal U(X,Y,Z)=-\bar g(A_YX,Z)$ and $(\mathsf S_{13}\mathcal U)(X,Y,Z)=-\bar g(A_Y^*X,Z)$. Contracting in the first and third slots gives $|\mathcal U-\mathsf S_{13}\mathcal U|^2=4\sum_i\|\Skew(A_{e_i})\|_{\mathrm{HS}}^2$. This proves \eqref{eq:moment-map-estimate} and the equality condition.
\end{proof}

We now combine the square formula with the geometric codifferential and the Ricci decomposition.

\begin{proposition}
Let $(\glyg,\mu,\bar g,H)$ be an algebraic generalized Ricci soliton and set $\eta=\geomcod H$. Then
\begin{equation}\label{eq:exact-energy-decomposition}
0=\frac1{12}|d_\mu\eta|^2+\frac12\mathcal Q_\mu(\eta)+\frac12\tr(K_\eta^*K_\eta B_\mu)+\frac14\tr(K_\eta^*K_\eta H^2)+|\iota_{U_\mu}\eta|^2.
\end{equation}
Moreover, for every orthonormal basis $\{e_j\}$,
\begin{equation}\label{eq:H-square-nonnegative}
\tr(K_\eta^*K_\eta H^2)=\sum_j|\iota_{K_\eta^*e_j}H|^2\geq0.
\end{equation}
\end{proposition}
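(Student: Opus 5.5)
Starting from the identity $\rho(R)\eta+\geomcod d_\mu\eta=0$ with $R=\RicB_{\mu,H}=\Ric_\mu-\frac14H^2$ (\cite[equation~(53)]{FuLaSt2024}), pair it with $\eta$ in the full contraction $\bar g$. For the second term, apply \eqref{eq:geometric-codifferential-pairing} with $p=2$, $\beta=d_\mu\eta$, $\alpha=\eta$:
\begin{equation*}
\bar g(\geomcod d_\mu\eta,\eta)=\frac13|d_\mu\eta|^2+\bar g(\iota_{U_\mu}d_\mu\eta,\eta).
\end{equation*}
For the first term, Lemma~\ref{lem:representation-contractions} gives $\bar g(\rho(R)\eta,\eta)=-2\tr(K_\eta^*K_\eta R)$. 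Expanding $R$ by \eqref{eq:general-Ricci-decomposition} yields
\begin{equation*}
-2\tr(K_\eta^*K_\eta M_\mu)+\tr(K_\eta^*K_\eta B_\mu)+2\tr(K_\eta^*K_\eta S_\mu)+\frac12\tr(K_\eta^*K_\eta H^2).
\end{equation*}
The $M_\mu$ term is rewritten through $\mathcal Q_\mu(\eta)-\frac16|d_\mu\eta|^2$. After halving, the $|d_\mu\eta|^2$ coefficients combine to $\frac12(\frac13-\frac16)=\frac1{12}$, and the $\mathcal Q$, $B$, $H^2$ terms acquire the coefficients $\frac12,\frac12,\frac14$ of \eqref{eq:exact-energy-decomposition}.

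The main obstacle is the leftover non-unimodular contribution
\begin{equation*}
\bar g(\iota_{U_\mu}d_\mu\eta,\eta)+2\tr(K_\eta^*K_\eta S_\mu),
\end{equation*}
which should equal $2|\iota_{U_\mu}\eta|^2$ before halving. My plan is to use Cartan's formula on invariant forms, $\iota_U d_\mu+d_\mu\iota_U=-\rho(\ad_\mu U)$ (the Lie derivative along a left-invariant field acting via $\rho(\ad U)$, up to sign conventions to be checked). This gives
\begin{equation*}
\bar g(\iota_{U_\mu}d_\mu\eta,\eta)=-\bar g(\rho(\ad_\mu U_\mu)\eta,\eta)-\bar g(d_\mu\iota_{U_\mu}\eta,\eta).
\end{equation*}
The first piece only sees the symmetric part $S_\mu$, since skew endomorphisms act skew-adjointly, so by Lemma~\ref{lem:representation-contractions} it equals $2\tr(K_\eta^*K_\eta S_\mu)$. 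The sign must come out so that this cancels (or doubles) with the existing $S_\mu$ term; I will fix conventions carefully here. For the second piece, \eqref{eq:geometric-codifferential-pairing} with $p=1$ gives
\begin{equation*}
\bar g(d_\mu\iota_U\eta,\eta)=2\bar g(\iota_U\eta,\geomcod\eta)-2\bar g(\iota_U\eta,\iota_U\eta).
\end{equation*}
Here $\geomcod\eta=(\geomcod)^2H=0$, so this piece contributes $+2|\iota_{U_\mu}\eta|^2$ (with the factor conventions for $\iota$ on full contractions to be checked). If the $S_\mu$ terms do not cancel exactly, I would instead derive the identity directly from $\geomcod\eta=0$ combined with \eqref{eq:geometric-codifferential-variation} applied to $A=\ad_\mu U_\mu$, which is a derivation with $\theta(A)\mu=0$, giving $[\geomcod,\rho(\ad U^*)]=0$.

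For \eqref{eq:H-square-nonnegative}, work in an orthonormal basis. Using \eqref{eq:tensor-square-components},
\begin{equation*}
\tr(K^*KH^2)=\sum_{i,j,p,a,b}\eta_{ip}\eta_{jp}H_{iab}H_{jab}=\sum_{p,a,b}\Bigl(\sum_i\eta_{ip}H_{iab}\Bigr)^2.
\end{equation*}
The inner sum is $(\iota_{K^*e_p}H)_{ab}$ up to sign, since $K^*e_p=\sum_i\eta_{pi}e_i$. This proves the sum-of-squares formula and its nonnegativity.
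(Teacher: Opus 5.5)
Your plan is the paper's proof. You pair $\rho(R)\eta+\geomcod d_\mu\eta=0$ with $\eta$, expand $R=M_\mu-\frac12B_\mu-S_\mu-\frac14H^2$, and absorb the $M_\mu$ term into $\mathcal Q_\mu(\eta)$. You then handle the non-unimodular terms with Cartan's formula and the pairing of $\geomcod\eta=0$ against $\iota_{U_\mu}\eta$. Your sum-of-squares computation for $\tr(K_\eta^*K_\eta H^2)$ is the coordinate form of the paper's cyclicity argument. There, $K_\eta^*e_p=\sum_i\eta_{ip}e_i$ exactly, so no sign ambiguity arises.

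The one point you left open has to be resolved the other way from how you wrote it. With the paper's convention $(\rho(A)\omega)(X,Y)=-\omega(AX,Y)-\omega(X,AY)$, an invariant form satisfies $(\mathcal L_{U}\eta)(X,Y)=-\eta([U,X],Y)-\eta(X,[U,Y])$, that is, $\mathcal L_U\eta=\rho(\ad_\mu U)\eta$. So Cartan's formula reads $\iota_{U_\mu}d_\mu+d_\mu\iota_{U_\mu}=+\rho(\ad_\mu U_\mu)$. Hence $\bar g(\iota_{U_\mu}d_\mu\eta,\eta)=\bar g(\rho(\ad_\mu U_\mu)\eta,\eta)-\bar g(d_\mu\iota_{U_\mu}\eta,\eta)=-2\tr(K_\eta^*K_\eta S_\mu)+2|\iota_{U_\mu}\eta|^2$. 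This cancels the $+2\tr(K_\eta^*K_\eta S_\mu)$ coming from the Ricci decomposition exactly. With the minus sign you wrote, the $S_\mu$ terms would add to $4\tr(K_\eta^*K_\eta S_\mu)$ and the identity would fail. Your fallback via $[\geomcod,\rho((\ad_\mu U_\mu)^*)]=0$ is therefore unnecessary.

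One further difference is that the paper does not just quote the defect identity. It rederives it from the soliton system via $[\geomcod,\rho(R)]=d^{*}_{g,\theta(R)\mu}=-\lambda\geomcod$, using $\theta(R)\mu=-\lambda\mu$. This ensures that the geometric codifferential, including its $\iota_{U_\mu}$ part, is the operator appearing there.
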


\begin{proof}
Set $R=\Ric_\mu-\frac14H^2=\lambda\Id+D$ and recall that $\eta=\geomcod H$. Since $D$ is a derivation, $\theta(D)\mu=0$. Together with $\theta(\Id)\mu=-\mu$, this gives $\theta(R)\mu=-\lambda\mu$. By \eqref{eq:geometric-codifferential-variation} and $R^*=R$, we have $[\geomcod,\rho(R)]=d_{g,\theta(R)\mu}^*=-\lambda\geomcod$. The last equality uses the linear dependence of $d_{g,\nu}^*$ on $\nu$.

Since $\rho(\Id)H=-3H$, we have $\rho(R)H=-3\lambda H+\rho(D)H$. Thus the second soliton equation becomes $\Delta_\mu H=\lambda H+\rho(R)H$. On the other hand, $d_\mu H=0$ gives $\Delta_\mu H=-d_\mu\eta$. Therefore $\rho(R)H+d_\mu\eta=-\lambda H$.

Apply $\geomcod$ to this equation. The commutator identity gives $\geomcod\rho(R)H=\rho(R)\eta-\lambda\eta$. Cancelling $-\lambda\eta$ from both sides, we obtain
\begin{equation}\label{eq:geometric-defect-operator}
\rho(R)\eta+\geomcod d_\mu\eta=0.
\end{equation}
This is the operator identity in \cite[equation~(53)]{FuLaSt2024}, with the geometric codifferential used throughout.

We next compute the contribution of $U_\mu$ to the pairing with $\eta$. Since $(\geomcod)^2=0$, we have $\geomcod\eta=0$. Apply \eqref{eq:geometric-codifferential-pairing} with $\beta=\eta$ and $\alpha=\iota_{U_\mu}\eta$, of degrees two and one. We obtain $0=\frac12\bar g(\eta,d_\mu\iota_{U_\mu}\eta)+|\iota_{U_\mu}\eta|^2$.

On invariant forms, our convention for $\rho$ gives $\mathcal L_{U_\mu}\eta=\rho(\ad_\mu U_\mu)\eta$. Cartan's formula therefore reads $\iota_{U_\mu}d_\mu\eta=\rho(\ad_\mu U_\mu)\eta-d_\mu\iota_{U_\mu}\eta$. The skew-adjoint part of $\ad_\mu U_\mu$ acts skew-adjointly on two-forms, so its pairing with $\eta$ itself vanishes. Hence Lemma~\ref{lem:representation-contractions}, applied to $S_\mu=\Sym(\ad_\mu U_\mu)$, gives $\bar g(\rho(\ad_\mu U_\mu)\eta,\eta)=-2\tr(K_\eta^*K_\eta S_\mu)$. Taking the inner product of Cartan's formula with $\eta$ now gives
\begin{equation}\label{eq:modular-pairing}
\bar g(\eta,\iota_{U_\mu}d_\mu\eta)
=-2\tr(K_\eta^*K_\eta S_\mu)+2|\iota_{U_\mu}\eta|^2.
\end{equation}

We now pair \eqref{eq:geometric-defect-operator} with $\eta$. Since $R$ is self-adjoint, Lemma~\ref{lem:representation-contractions} gives $\bar g(\rho(R)\eta,\eta)=-2\tr(K_\eta^*K_\eta R)$. Apply \eqref{eq:geometric-codifferential-pairing} again, now with $\beta=d_\mu\eta$ and $\alpha=\eta$, of degrees three and two. It gives $\bar g(\geomcod d_\mu\eta,\eta)=\frac13|d_\mu\eta|^2+\bar g(\eta,\iota_{U_\mu}d_\mu\eta)$. Substituting \eqref{eq:modular-pairing}, we obtain
\begin{equation}\label{eq:pre-energy}
0=-2\tr\bigl(K_\eta^*K_\eta(R+S_\mu)\bigr)
+\frac13|d_\mu\eta|^2+2|\iota_{U_\mu}\eta|^2.
\end{equation}

The Ricci decomposition \eqref{eq:general-Ricci-decomposition} gives $R=M_\mu-\frac12B_\mu-S_\mu-\frac14H^2$. Thus the term $-S_\mu$ in $R$ cancels the additional $S_\mu$ in \eqref{eq:pre-energy}. Substitute this decomposition and divide by two. The terms involving $d_\mu\eta$ and $M_\mu$ satisfy $\frac16|d_\mu\eta|^2-\tr(K_\eta^*K_\eta M_\mu)=\frac1{12}|d_\mu\eta|^2+\frac12\mathcal Q_\mu(\eta)$ by the definition of $\mathcal Q_\mu$. This proves \eqref{eq:exact-energy-decomposition}.

Finally, cyclicity gives $\tr(K_\eta^*K_\eta H^2)=\tr(K_\eta H^2K_\eta^*)$. In an orthonormal basis $\{e_j\}$, the latter trace is $\sum_j\bar g(H^2K_\eta^*e_j,K_\eta^*e_j)$. By the definition of $H^2$, each summand equals $|\iota_{K_\eta^*e_j}H|^2$. This proves \eqref{eq:H-square-nonnegative}.
\end{proof}

In the identity \eqref{eq:exact-energy-decomposition}, only the term $\frac12\tr(K_\eta^*K_\eta B_\mu)$ is possibly nonnegative. The equality case gives the harmonicity criterion.

\begin{theorem}[Theorem~\ref{thm:criterion-introduction}]\label{thm:killing-harmonicity}
Let $(\glyg,\mu,\bar g,H)$ be an algebraic generalized Ricci soliton and set $\eta=\geomcod H$. Then
\begin{equation}\label{eq:killing-harmonicity-criterion}
\tr(K_\eta^*K_\eta B_\mu)=-\frac16|d_\mu\eta|^2-\mathcal Q_\mu(\eta)-\frac12\tr(K_\eta^*K_\eta H^2)-2|\iota_{U_\mu}\eta|^2\leq0.
\end{equation}
Equality holds if and only if $H$ is harmonic.
\end{theorem}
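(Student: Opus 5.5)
The identity follows by multiplying \eqref{eq:exact-energy-decomposition} by $2$ and isolating the Killing-form term:
\begin{equation*}
\tr(K_\eta^*K_\eta B_\mu)=-\frac16|d_\mu\eta|^2-\mathcal Q_\mu(\eta)-\frac12\tr(K_\eta^*K_\eta H^2)-2|\iota_{U_\mu}\eta|^2 .
\end{equation*}
The inequality is then a term-by-term sign check. The first and last terms are minus squared norms. The term $-\mathcal Q_\mu(\eta)$ is nonpositive by the sum-of-squares lemma \eqref{eq:moment-map-estimate}, and $-\frac12\tr(K_\eta^*K_\eta H^2)$ is nonpositive by \eqref{eq:H-square-nonnegative}.

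For the equality case, one direction is immediate. If $H$ is harmonic, then $\eta=\geomcod H=0$, so $K_\eta=0$ and both sides vanish.

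Conversely, suppose equality holds in the inequality, i.e.\ $\tr(K_\eta^*K_\eta B_\mu)=0$. Since the right-hand side is a sum of four nonpositive terms and it equals $0$, each term vanishes. In particular $d_\mu\eta=0$, $\iota_{U_\mu}\eta=0$, $\mathcal Q_\mu(\eta)=0$, and $\iota_{K_\eta^*e_j}H=0$ for all $j$. The goal is to deduce $\eta=0$, since $d_\mu H=0$ holds by hypothesis and harmonicity then follows.

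For this I use $\geomcod H=\eta$ itself rather than the vanishing terms. By \eqref{eq:geometric-codifferential-pairing} with $\beta=H$ and $\alpha=\eta$,
\begin{equation*}
|\eta|^2=\bar g(\geomcod H,\eta)=\frac13\bar g(H,d_\mu\eta)+\bar g(\iota_{U_\mu}H,\eta).
\end{equation*}
The first term vanishes because $d_\mu\eta=0$. For the second, note that $\bar g(\iota_{U_\mu}H,\eta)$ is, up to a constant factor, $\bar g(H,U_\mu^\flat\wedge\eta)$. Its vanishing needs more than $\iota_{U_\mu}\eta=0$, so I bring in $\iota_{K_\eta^*e_j}H=0$. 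This says $\im K_\eta\subset\ker H$. Expanding $\eta=\sum\eta_{ij}e^i\wedge e^j$ in an orthonormal basis adapted to $\im K_\eta\oplus(\im K_\eta)^\perp$, the form $\eta$ is supported in $\Lambda^2(\im K_\eta)^*$, since $\eta(X,Y)=\bar g(K_\eta X,Y)$ vanishes once $Y\perp\im K_\eta$. Each component of $\eta\wedge U_\mu^\flat$ therefore has at least two indices in $\im K_\eta\subset\ker H$, so its full contraction with $H$ vanishes. Hence $\bar g(\iota_{U_\mu}H,\eta)=0$, so $|\eta|^2=0$ and $H$ is harmonic.

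This last support argument is the only step that is not bookkeeping. If the constant factor in the pairing identity proves delicate, a fallback is to argue directly: in coordinates $\bar g(\iota_{U_\mu}H,\eta)=\sum U_a H_{aij}\eta_{ij}$, and every nonzero $\eta_{ij}$ has $i\in\im K_\eta\subset\ker H$, so each summand vanishes.
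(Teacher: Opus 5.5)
Your proposal is correct and follows essentially the same route as the paper. Both arguments solve the energy identity for the Killing term and check signs term by term. For the converse, both use the vanishing of $d_\mu\eta$ and of $\tr(K_\eta^*K_\eta H^2)$ to get $\im K_\eta\subset\ker H$, then apply the pairing identity with $\beta=H$, $\alpha=\eta$ and kill $\bar g(\iota_{U_\mu}H,\eta)$. The only difference is cosmetic: you phrase that last step via the support of $\eta$ in $\Lambda^2(\im K_\eta)^*$, while the paper writes it as $\sum_i H(U_\mu,e_i,K_\eta e_i)=0$.
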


\begin{proof}
Solving \eqref{eq:exact-energy-decomposition} for $\tr(K_\eta^*K_\eta B_\mu)$ gives \eqref{eq:killing-harmonicity-criterion}. By \eqref{eq:moment-map-estimate} and \eqref{eq:H-square-nonnegative}, every term on the right-hand side is nonpositive. If $H$ is harmonic, then $\eta=0$ and equality holds.

Conversely, suppose $\tr(K_\eta^*K_\eta B_\mu)\geq0$. Every term in \eqref{eq:exact-energy-decomposition} is then nonnegative, so each term vanishes. In particular, $d_\mu\eta=0$ and $\tr(K_\eta^*K_\eta H^2)=0$. By \eqref{eq:H-square-nonnegative}, the latter equality gives $\iota_{K_\eta^*e_j}H=0$ for every vector in an orthonormal basis $\{e_j\}$. Since $K_\eta^*=-K_\eta$, it follows that $\im K_\eta\subset\ker H$, where $\ker H=\{X\in\glyg:\iota_XH=0\}$.

It remains to show that $\eta=0$. Apply \eqref{eq:geometric-codifferential-pairing} with $\beta=H$ and $\alpha=\eta$. Since $\geomcod H=\eta$, we obtain $|\eta|^2=\frac13\bar g(H,d_\mu\eta)+\bar g(\eta,\iota_{U_\mu}H)$. The first term vanishes because $d_\mu\eta=0$. For the second term, write $\eta_{ij}=\eta(e_i,e_j)$. Full contraction gives $\bar g(\eta,\iota_{U_\mu}H)=\sum_{i,j}\eta_{ij}H(U_\mu,e_i,e_j)$. Since $K_\eta e_i=\sum_j\eta_{ij}e_j$, this equals $\sum_iH(U_\mu,e_i,K_\eta e_i)$. Each summand vanishes because $K_\eta e_i\in\ker H$ and $H$ is alternating. Therefore $|\eta|^2=0$. Together with $d_\mu H=0$, this proves that $H$ is harmonic.
\end{proof}

\begin{remark}
Fusi, Lafuente and Stanfield proved that algebraic generalized Ricci solitons on simply connected unimodular Lie groups with positive-semidefinite Killing form have harmonic torsion \cite[Corollary~B]{FuLaSt2026}. Since $B_\mu\geq0$ implies $\tr(K_\eta^*K_\eta B_\mu)\geq0$, Theorem~\ref{thm:killing-harmonicity} gives the same conclusion without the unimodularity assumption.
\end{remark}

\begin{corollary}[Corollary~\ref{coroB}]
Let $(\glyg,\mu,\bar g)$ be a metric Lie algebra and set $\mathcal C^2_{\mu,\bar g}=\im(\geomcod:\Lambda^3\glyg^*\longrightarrow\Lambda^2\glyg^*)$. Suppose that for every $\alpha\in\mathcal C^2_{\mu,\bar g}$
\begin{equation}\label{eq:restricted-killing-positivity}
\tr(K_\alpha^*K_\alpha B_\mu)\geq0.
\end{equation}
Then every algebraic generalized Ricci soliton $(\glyg,\mu,\bar g,H)$ has harmonic torsion. In particular, this conclusion holds when $B_\mu\geq0$, and hence on every completely solvable Lie group.
\end{corollary}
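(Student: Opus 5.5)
The corollary follows from Theorem~\ref{thm:killing-harmonicity} once we observe that $\eta$ always lies in the subspace $\mathcal C^2_{\mu,\bar g}$. We then deal separately with the two special cases, $B_\mu\geq0$ and completely solvable Lie groups.

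First, let $(\glyg,\mu,\bar g,H)$ be an algebraic generalized Ricci soliton and set $\eta=\geomcod H$. Since $H\in\Lambda^3\glyg^*$, we have $\eta\in\mathcal C^2_{\mu,\bar g}$ by definition, so the hypothesis \eqref{eq:restricted-killing-positivity} gives $\tr(K_\eta^*K_\eta B_\mu)\geq0$. Theorem~\ref{thm:killing-harmonicity} gives the reverse inequality $\tr(K_\eta^*K_\eta B_\mu)\leq0$. Hence equality holds, and by the equality statement of that theorem $H$ is harmonic.

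Second, suppose $B_\mu\geq0$. For any two-form $\alpha$, cyclicity of the trace gives
\[
\tr(K_\alpha^*K_\alpha B_\mu)=\tr(K_\alpha B_\mu K_\alpha^*)=\sum_j\bar g\bigl(B_\mu K_\alpha^*e_j,K_\alpha^*e_j\bigr)\geq0,
\]
where $\{e_j\}$ is an orthonormal basis. So \eqref{eq:restricted-killing-positivity} holds for every $\alpha$, and in particular on $\mathcal C^2_{\mu,\bar g}$. The first step then applies.

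Third, suppose $\glyg$ is completely solvable, meaning every $\ad_\mu X$ has only real eigenvalues. By Lie's theorem in its real form, which applies because all eigenvalues are real, there is a basis in which every $\ad_\mu X$ is upper triangular. Let $\chi_i(X)$ denote the $i$-th diagonal entry. Then $\tr(\ad_\mu X\circ\ad_\mu X)=\sum_i\chi_i(X)^2\geq0$, so $B_\mu$ is positive semidefinite and the second step applies.

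The only point needing care is the triangularization used in the third step; the rest is formal. If one prefers to avoid it, it suffices to argue for each fixed $X$: the eigenvalues of $\ad_\mu X$ are real, so the eigenvalues of $(\ad_\mu X)^2$ are their squares, and hence $\tr((\ad_\mu X)^2)\geq0$. This already shows $B_\mu\geq0$, since the quadratic form $X\mapsto\bar g(B_\mu X,X)=\tr((\ad_\mu X)^2)$ is then nonnegative.
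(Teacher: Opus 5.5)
Your proposal is correct and follows the paper's proof. You place $\eta=\geomcod H$ in $\mathcal C^2_{\mu,\bar g}$ and use the equality case of Theorem~\ref{thm:killing-harmonicity}, then verify positivity when $B_\mu\geq0$, then use the real spectra of $\ad_\mu X$; the only differences are cosmetic. You write $\tr(K_\alpha^*K_\alpha B_\mu)=\sum_j\bar g(B_\mu K_\alpha^*e_j,K_\alpha^*e_j)$ instead of $\|K_\alpha B_\mu^{1/2}\|_{\mathrm{HS}}^2$, and the Lie-theorem triangularization is unnecessary, since your per-$X$ eigenvalue argument (which is exactly the paper's) already suffices.
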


\begin{proof}
For an algebraic generalized Ricci soliton, $\eta=\geomcod H$ belongs to $\mathcal C^2_{\mu,\bar g}$ by definition. Thus \eqref{eq:restricted-killing-positivity} gives $\tr(K_\eta^*K_\eta B_\mu)\geq0$. By \eqref{eq:killing-harmonicity-criterion}, this trace is also nonpositive, so it is zero. The equality assertion in Theorem~\ref{thm:killing-harmonicity} shows that $H$ is harmonic.

If $B_\mu\geq0$, its positive-semidefinite square root is defined. By cyclicity of the trace, $\tr(K_\alpha^*K_\alpha B_\mu)=\|K_\alpha B_\mu^{1/2}\|_{\mathrm{HS}}^2\geq0$ for every two-form $\alpha$. Hence \eqref{eq:restricted-killing-positivity} holds.

Finally, suppose $\glyg$ is completely solvable. For every $X\in\glyg$, the eigenvalues $r_1,\ldots,r_n$ of $\ad_\mu X$ are real. Counting them with algebraic multiplicity, we have $\bar g(B_\mu X,X)=\tr((\ad_\mu X)^2)=\sum_jr_j^2\geq0$. Thus $B_\mu\geq0$, and the preceding argument applies.
\end{proof}

\begin{corollary}[{\cite[Theorem~1.1]{ChChZh2026}}]\label{cor:nilpotent-harmonicity}
Every algebraic generalized Ricci soliton on a nilpotent Lie group has harmonic torsion.
\end{corollary}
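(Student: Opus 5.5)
For a nilpotent Lie algebra the Killing operator vanishes identically, so the plan is to deduce this directly from Theorem~\ref{thm:killing-harmonicity} or Corollary~\ref{coroB}. The first step is to check that $B_\mu=0$. For every $X,Y\in\glyg$, Engel's theorem makes $\ad_\mu X\circ\ad_\mu Y$ nilpotent: in a basis adapted to the lower central series both operators are strictly upper triangular. Hence $\bar g(B_\mu X,Y)=\tr(\ad_\mu X\circ\ad_\mu Y)=0$, so $B_\mu=0$. Alternatively, a nilpotent algebra is completely solvable, since every $\ad_\mu X$ has only the eigenvalue $0$. The last sentence of Corollary~\ref{coroB} then applies directly.

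With $B_\mu=0$ we have $\tr(K_\alpha^*K_\alpha B_\mu)=0\geq0$ for every two-form $\alpha$. In particular this holds for every $\alpha\in\mathcal C^2_{\mu,\bar g}$, so condition \eqref{eq:restricted-killing-positivity} is satisfied. Corollary~\ref{coroB} therefore shows that every algebraic generalized Ricci soliton on $(\glyg,\mu,\bar g)$ has harmonic torsion. Equivalently, in \eqref{eq:killing-harmonicity-criterion} the left-hand side is zero, so equality holds and $H$ is harmonic.

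It remains to pass from the Lie group to the Lie algebra. A left-invariant algebraic generalized Ricci soliton on a nilpotent Lie group is the same thing as the data $(\glyg,\mu,\bar g,H)$ on its Lie algebra. Harmonicity of the left-invariant form is the condition $d_\mu H=0$, $\geomcod H=0$ used throughout. I do not see any real obstacle here. Unimodularity of nilpotent algebras ($U_\mu=0$) is not even needed, because Theorem~\ref{thm:killing-harmonicity} already accounts for the $U_\mu$ term.
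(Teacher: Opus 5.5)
Your proposal is correct and follows the same route as the paper: nilpotency gives $B_\mu=0$, so $\tr(K_\eta^*K_\eta B_\mu)=0$ and the equality case of Theorem~\ref{thm:killing-harmonicity} (equivalently Corollary~\ref{coroB}) yields harmonicity. Your extra justification of $B_\mu=0$, via strict triangularity of $\ad_\mu X$ in a basis adapted to the lower central series, simply fills in the one-line step the paper leaves implicit.
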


\begin{proof}
Nilpotency gives $B_\mu=0$, so Theorem~\ref{thm:criterion-introduction} applies.
\end{proof}

The positivity condition on $\mathcal C^2_{\mu,\bar g}$ is strictly weaker than $B_\mu\geq0$, as the following example shows.

\begin{example}\label{ex:restricted-positivity-rotating-product}
Let $\{e_1,\ldots,e_6\}$ be orthonormal, with nonzero brackets $[e_1,e_2]=e_2+2e_3$ and $[e_1,e_3]=-2e_2+e_3$. Set $H=\frac2{\sqrt3}e^{456}$, $\lambda=-2$ and $D=\diag(0,0,0,\frac43,\frac43,\frac43)$. These data define an expanding algebraic generalized Ricci soliton. Its Killing form has a negative direction, whereas $\tr(K_\alpha^*K_\alpha B_\mu)=0$ for every $\alpha\in\mathcal C^2_{\mu,\bar g}$.
\end{example}

\begin{proof}
Write $\glyg=\mathfrak s\oplus V$ as an orthogonal direct sum of ideals, where $\mathfrak s=\operatorname{span}\{e_1,e_2,e_3\}$ and $V=\operatorname{span}\{e_4,e_5,e_6\}$. The definition of $M_\mu$ gives $M_\mu|_{\mathfrak s}=\diag(-5,0,0)$. We also have $U_\mu=2e_1$, $B_\mu|_{\mathfrak s}=\diag(-6,0,0)$ and $S_\mu|_{\mathfrak s}=\diag(0,2,2)$. The endomorphisms $M_\mu,B_\mu,S_\mu$ vanish on $V$. The Ricci decomposition gives $\Ric_\mu=-2\Id_{\mathfrak s}\oplus0_V$, while $H^2=0_{\mathfrak s}\oplus\frac83\Id_V$. Hence $\Ric_\mu-\frac14H^2=-2\Id+D$. The product form $H$ is closed and coclosed, $D$ is a symmetric derivation, and $\rho(D)H=-4H=2\lambda H$. Thus the soliton equations hold.

To compute $\mathcal C^2_{\mu,\bar g}$, set $\delta=\geomcod=d_\mu^\dagger+\iota_{2e_1}$. From $d_\mu e^2=-e^{12}+2e^{13}$ and $d_\mu e^3=-2e^{12}-e^{13}$, we obtain $\delta e^1=2$ and $\delta e^2=\delta e^3=0$. On two-forms, $\delta e^{12}=e^2-2e^3$, $\delta e^{13}=2e^2+e^3$ and $\delta e^{23}=0$. Moreover, $d_\mu^\dagger e^{123}=-2e^{23}$ and $\iota_{2e_1}e^{123}=2e^{23}$, so $\delta e^{123}=0$.

Since $\delta$ acts only on the $\mathfrak s$ factor, these formulas give $\mathcal C^2_{\mu,\bar g}=(\operatorname{span}\{e^2,e^3\}\wedge V^*)\oplus\Lambda^2V^*$. Every two-form in this space annihilates $e_1$. Therefore $\tr(K_\alpha^*K_\alpha B_\mu)=-6|K_\alpha e_1|^2=0$ on $\mathcal C^2_{\mu,\bar g}$, although $\bar g(B_\mu e_1,e_1)=-6$.
\end{proof}

\section{Harmonic forms and diagonal soliton metrics}\label{sec:weights}

In this section, we record the harmonic reduction and trace identities for generalized nilsolitons. We then characterize harmonic decomposable forms and give a criterion for diagonal soliton metrics with the Lie bracket and three-form fixed.

Throughout this section, the Lie algebra is nilpotent, so $\Ric_\mu=M_\mu$ and $\geomcod=\CEadj$. An algebraic generalized Ricci soliton on a nilpotent Lie group is called a \emph{generalized nilsoliton}. We call $(\mu,H)$ \emph{nontrivial} if $(\mu,H)\neq(0,0)$. A nilpotent Lie algebra admits a generalized nilsoliton if some inner product and closed three-form satisfy \eqref{eq:soliton-system}.

\begin{corollary}\label{cor:reduced-soliton-system}
Let $(\nalg,\mu,\bar g)$ be a metric nilpotent Lie algebra and let $H\in\Lambda^3\nalg^*$ be closed. Then $(\mu,H)$ is a generalized nilsoliton if and only if $\CEadj H=0$ and there exist $\lambda\in\R$ and $D\in\Der(\mu)\cap\Sym(\nalg)$ such that
\begin{equation}\label{eq:reduced-soliton-system}
\left\{
\begin{aligned}
\Ric_\mu-\frac14H^2&=\lambda\Id+D,\\
\rho(D)H&=2\lambda H.
\end{aligned}
\right.
\end{equation}
\end{corollary}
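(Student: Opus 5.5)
The argument is a direct reduction of the soliton system \eqref{eq:soliton-system} using Corollary~\ref{cor:nilpotent-harmonicity} together with the nilpotent simplifications $\Ric_\mu=M_\mu$ and $\geomcod=\CEadj$ (since $U_\mu=0$). Both systems share the first equation $\RicB_{\mu,H}=\lambda\Id+D$ with $D\in\Der(\mu)$. As already noted after the definition, this equation forces $D$ to be self-adjoint, because $\RicB_{\mu,H}$ is symmetric. Hence the derivation appearing in \eqref{eq:soliton-system} automatically lies in $\Der(\mu)\cap\Sym(\nalg)$. It therefore suffices to compare the second equations under the hypothesis $d_\mu H=0$.

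For the forward direction, suppose $(\mu,H)$ is a generalized nilsoliton. Corollary~\ref{cor:nilpotent-harmonicity} shows that $H$ is harmonic, so $\CEadj H=\geomcod H=0$. Together with $d_\mu H=0$, this gives
\[
\Delta_\mu H=-(d_\mu\geomcod H+\geomcod d_\mu H)=0.
\]
The second soliton equation then reads $0=-2\lambda H+\rho(D)H$, which is exactly $\rho(D)H=2\lambda H$.

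For the converse, assume $\CEadj H=0$ and that \eqref{eq:reduced-soliton-system} holds. On a nilpotent algebra $\geomcod=\CEadj$, so $H$ is closed and coclosed and $\Delta_\mu H=0$. Rewriting $\rho(D)H=2\lambda H$ as $0=-2\lambda H+\rho(D)H$ recovers the second equation of \eqref{eq:soliton-system}. The first equation is shared, and $D$ is a derivation, so $(\mu,H)$ is an algebraic generalized Ricci soliton.

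There is no genuine obstacle. The only substantive input is the harmonicity statement of Corollary~\ref{cor:nilpotent-harmonicity}, which is used in the forward direction. Beyond that, one only has to check the sign conventions in $\Delta_\mu=-(d_\mu\geomcod+\geomcod d_\mu)$ and confirm that unimodularity makes the two codifferentials agree.
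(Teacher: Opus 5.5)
Your proposal is correct and follows essentially the same route as the paper. Both arguments use Corollary~\ref{cor:nilpotent-harmonicity} to get $\Delta_\mu H=0$ in the forward direction, and closedness plus coclosedness (with $\geomcod=\CEadj$ for nilpotent algebras) in the converse; your remark that the first equation forces $D$ to be symmetric just fills in a detail the paper leaves implicit.
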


\begin{proof}
By \cite[Theorem~1.1]{ChChZh2026}, recalled in Corollary~\ref{cor:nilpotent-harmonicity}, every generalized nilsoliton has harmonic torsion. Hence $\Delta_\mu H=0$ and \eqref{eq:soliton-system} reduces to \eqref{eq:reduced-soliton-system}. Conversely, closedness and coclosedness give $\Delta_\mu H=0$, so the reduced equations imply the soliton system.
\end{proof}

In an orthonormal eigenbasis $De_i=d_i e_i$, the torsion equation in \eqref{eq:reduced-soliton-system} says that $H_{ijk}\neq0$ implies $d_i+d_j+d_k=-2\lambda$.

\begin{lemma}\label{lem:ricci-derivation-orthogonality}
For a metric nilpotent Lie algebra, $\tr(\Ric_\mu E)=0$ for every $E\in\Der(\mu)$.
\end{lemma}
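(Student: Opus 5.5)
The plan is to use the moment-map identity \eqref{eq:moment-map-identity} together with $\Ric_\mu=M_\mu$, which holds for nilpotent Lie algebras by \eqref{eq:general-Ricci-decomposition}. The statement involves an arbitrary derivation $E$, not necessarily symmetric. So I will first record the general version of the identity: for any $A\in\mathfrak{gl}(\nalg)$,
\begin{equation*}
4\tr(M_\mu A)=\bar g(\theta(A)\mu,\mu).
\end{equation*}
This follows by pairing the three terms of $\theta(A)\mu$ with $\mu$ exactly as in the symmetric case. Both sides are linear in $A$, and since $M_\mu$ is symmetric, each side depends only on the symmetric part of $A$.

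The left side clearly sees only $\Sym(A)$, because $\tr(M_\mu \Skew(A))=0$. For the right side I must check that $\bar g(\theta(A)\mu,\mu)=0$ when $A$ is skew-adjoint. This holds because $\theta$ is the differential of the $\mathrm{GL}$-action, the orthogonal group preserves the norm on brackets, and hence $\theta(\Skew A)$ is skew-adjoint for $\bar g$ on $\Lambda^2\nalg^*\otimes\nalg$. Alternatively, one can verify it directly from \eqref{eq:representations}: with $A^*=-A$, the term $\bar g(A\mu,\mu)$ vanishes, and $\bar g(\mu(A\cdot,\cdot),\mu)$ is $\tr$ of a product of $A$ with a symmetric matrix, which also vanishes.

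With the identity in hand, take $A=E\in\Der(\mu)$. Then $\theta(E)\mu=0$ by the definition of a derivation, since $E\mu(X,Y)=\mu(EX,Y)+\mu(X,EY)$. Hence
\begin{equation*}
4\tr(\Ric_\mu E)=4\tr(M_\mu E)=\bar g(\theta(E)\mu,\mu)=0.
\end{equation*}

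The only step that needs care is extending \eqref{eq:moment-map-identity} from $\Sym(\nalg)$ to all of $\mathfrak{gl}(\nalg)$, that is, the skew-part vanishing check. This is routine orthogonal invariance. Nilpotency enters only through $\Ric_\mu=M_\mu$, since in that case $B_\mu=0$ and $S_\mu=0$.
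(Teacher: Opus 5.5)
Your proof is correct and takes the paper's approach: the paper likewise applies the moment-map identity \eqref{eq:moment-map-identity} to an arbitrary derivation $E$, and uses $\Ric_\mu=M_\mu$ and $\theta(E)\mu=0$. The paper only asserts that the identity holds for arbitrary endomorphisms, whereas you also check (correctly) that the skew-adjoint part of $A$ contributes zero to both sides.
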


\begin{proof}
The moment-map identity \eqref{eq:moment-map-identity} holds for arbitrary endomorphisms. Since $\Ric_\mu=M_\mu$ and $\theta(E)\mu=0$ for a derivation, it follows that $4\tr(\Ric_\mu E)=\bar g(\theta(E)\mu,\mu)=0$.
\end{proof}

The expanding property is known \cite[Proposition~7.7]{FuLaSt2024}. We reprove it together with the derivation trace identity.

\begin{proposition}\label{prop:nilsoliton-trace}
Let $(\nalg,\mu,\bar g,H)$ be a nontrivial generalized nilsoliton with constant $\lambda$ and symmetric derivation $D$. Then $\lambda<0$, $D\neq0$, and
\begin{equation}\label{eq:nilsoliton-trace}
\|D\|_{\mathrm{HS}}^2=-\lambda\left(\tr D-\frac16|H|^2\right),
\qquad
\tr D>\frac16|H|^2.
\end{equation}
\end{proposition}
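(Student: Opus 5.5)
The plan is to extract scalar identities from the reduced system in Corollary~\ref{cor:reduced-soliton-system} by pairing the metric equation with $D$ and with $\Id$.

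\textbf{The $D$-trace identity.} Pair $\Ric_\mu-\frac14H^2=\lambda\Id+D$ with $D$ in the trace inner product. This gives
\[
\tr(\Ric_\mu D)-\tfrac14\tr(H^2D)=\lambda\tr D+\|D\|_{\mathrm{HS}}^2 .
\]
\begin{itemize}
\item By Lemma~\ref{lem:ricci-derivation-orthogonality}, $\tr(\Ric_\mu D)=0$, since $D$ is a derivation.
\item For the torsion term, combine Lemma~\ref{lem:representation-contractions} with the torsion equation $\rho(D)H=2\lambda H$. This gives $-3\tr(H^2D)=\bar g(\rho(D)H,H)=2\lambda|H|^2$, so $\tr(H^2D)=-\frac23\lambda|H|^2$.
\item Substituting, $\|D\|^2_{\mathrm{HS}}=-\lambda\tr D+\frac16\lambda|H|^2$, which is exactly the identity in \eqref{eq:nilsoliton-trace}.
\end{itemize}
This identity holds for any generalized nilsolitons and uses nothing beyond harmonicity, which is already built into the reduced system.

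\textbf{The sign of $\lambda$.} Taking the trace of the metric equation and using $\tr\Ric_\mu=\tr M_\mu=-|\mu|^2/4$ gives
\[
-\tfrac14\bigl(|\mu|^2+|H|^2\bigr)=n\lambda+\tr D .
\]
\begin{itemize}
\item \emph{Case $\lambda=0$.} The $D$-trace identity forces $D=0$. Then the trace identity gives $|\mu|^2+|H|^2=0$, so the soliton is trivial, which is excluded.
\item \emph{Case $\lambda>0$.} This is the main obstacle: the two scalar identities alone do not exclude it. Eliminating $\tr D$ yields $\|D\|^2=\lambda\bigl(c-n\lambda\bigr)$ with $c=\frac14|\mu|^2+\frac5{12}|H|^2$, which is compatible with small positive $\lambda$.
\item \emph{First attempt for $\lambda>0$.} I would use the structure of $D$ as a symmetric derivation. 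It preserves $[\nalg,\nalg]$ and its orthogonal complement $V$. For unit $X\in V$, the moment-map formula gives $\bar g(\Ric_\mu X,X)=-\frac12\sum_i|\mu(X,e_i)|^2\le0$. Hence every $D$-eigenvalue on $V$ is at most $-\lambda<0$. Since $[\nalg,\nalg]$ is generated by brackets of $V$, all eigenvalues of $D$ are sums of at least one $V$-eigenvalue, so $D$ would be negative definite and $\tr D<0$. The remaining step is to play this against the torsion constraint: $H_{ijk}\ne0$ forces $d_i+d_j+d_k=-2\lambda$, together with the $D$-trace identity. I expect to need a finer estimate here, for instance that each eigenvalue is at most $-\lambda$ times its word length.
\item \emph{Fallback.} If this direct route does not close, I would invoke the expanding property \cite[Proposition~7.7]{FuLaSt2024} for $\lambda<0$ and reprove only the remaining claims.
\end{itemize}

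\textbf{$D\neq0$ and the strict trace inequality.} Once $\lambda<0$ is known, suppose $D=0$.
\begin{itemize}
\item The $D$-trace identity gives $|H|^2=0$, so $\Ric_\mu=\lambda\Id$ with $\lambda<0$.
\item If $\mu\ne0$, this is impossible: on a nonabelian nilpotent Lie algebra $\Ric_\mu$ is positive in some direction of the center. Indeed, for unit $Z$ in $Z(\nalg)\cap[\nalg,\nalg]$, the moment-map formula gives $\bar g(\Ric_\mu Z,Z)\ge0$, with a strictly positive contribution along the derived algebra.
\item If $\mu=0$, then $\Ric_\mu=0$, forcing $\lambda=0$, a contradiction.
\end{itemize}
Hence $D\neq0$. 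Then $\|D\|_{\mathrm{HS}}^2>0$ and $-\lambda>0$ in \eqref{eq:nilsoliton-trace} give $\tr D>\frac16|H|^2$.
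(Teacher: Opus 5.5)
The $D$-trace identity, the proof that $D\neq0$ once $\lambda<0$ is known, and the final strict inequality are correct and essentially match the paper. The gap is the sign of $\lambda$. For $\lambda>0$ your direct argument is left unfinished, and the fallback cites the expanding property (Fusi--Lafuente--Stanfield, Proposition~7.7). So within your proposal, one of the three conclusions is imported rather than proved.

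Your claim that the two scalar identities cannot rule out $\lambda>0$ is also wrong. (Eliminating $\tr D$ actually gives $\|D\|_{\mathrm{HS}}^2=\lambda(c+n\lambda)$, not $\lambda(c-n\lambda)$.) You only used $\|D\|_{\mathrm{HS}}^2\geq0$. The missing idea is to use positivity of the norm of $R=\lambda\Id+D=\Ric_\mu-\frac14H^2$ itself. Substituting your two identities gives
\[
\|R\|_{\mathrm{HS}}^2=n\lambda^2+2\lambda\tr D+\|D\|_{\mathrm{HS}}^2=-\lambda\Bigl(\frac14|\mu|^2+\frac1{12}|H|^2\Bigr).
\]
Your trace computation gives $\tr R=-\frac14(|\mu|^2+|H|^2)<0$, so $R\neq0$ and the left side is strictly positive. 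Hence $\lambda<0$, which handles $\lambda=0$ and $\lambda>0$ at once. This is the paper's proof. It obtains the same formula directly by applying \eqref{eq:moment-map-identity} and Lemma~\ref{lem:representation-contractions} to $R$, using $\theta(R)\mu=-\lambda\mu$ and $\rho(R)H=-\lambda H$.

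Your first attempt can also be finished without any finer estimate. You showed every eigenvalue of $D$ on $V=[\nalg,\nalg]^\perp$ is at most $-\lambda<0$, and every eigenvalue of $D$ is a sum of at least one of these. So all eigenvalues are at most $-\lambda$, giving $d_i+d_j+d_k\leq-3\lambda<-2\lambda$, and the torsion equation forces $H=0$. Then $\mu\neq0$, and the eigenvalues of $D$ on $[\nalg,\nalg]$ are sums of at least two $V$-eigenvalues, hence at most $-2\lambda$. For $0\neq Z\in Z(\nalg)\cap[\nalg,\nalg]$ this gives $\bar g(\Ric_\mu Z,Z)\leq-\lambda|Z|^2<0$. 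That contradicts $\bar g(\Ric_\mu Z,Z)\geq0$ for central $Z$. This route works but is longer than the norm identity above.
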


\begin{proof}
Set $R=\Ric_\mu-\frac14H^2=\lambda\Id+D$. Since $D$ is a derivation, $\theta(D)\mu=0$. Together with $\theta(\Id)\mu=-\mu$, this gives $\theta(R)\mu=-\lambda\mu$. The torsion equation in \eqref{eq:reduced-soliton-system} gives $\rho(D)H=2\lambda H$. Since $\rho(\Id)H=-3H$, we also obtain $\rho(R)H=-\lambda H$.

Applying \eqref{eq:moment-map-identity} with the self-adjoint endomorphism $R$, we obtain $4\tr(M_\mu R)=\bar g(\theta(R)\mu,\mu)=-\lambda|\mu|^2$. Similarly, Lemma~\ref{lem:representation-contractions} gives $-3\tr(H^2R)=\bar g(\rho(R)H,H)=-\lambda|H|^2$. Since $\Ric_\mu=M_\mu$, we have $\|R\|_{\mathrm{HS}}^2=\tr(R^2)=\tr(M_\mu R)-\frac14\tr(H^2R)$. Substituting the preceding two identities yields $\|R\|_{\mathrm{HS}}^2=-\lambda(|\mu|^2/4+|H|^2/12)$.

To determine the sign of $\lambda$, use $\tr(M_\mu)=-|\mu|^2/4$ and $\tr(H^2)=|H|^2$. These give $\tr R=-(|\mu|^2+|H|^2)/4<0$ because $(\mu,H)\neq(0,0)$. Hence $R\neq0$. The factor $|\mu|^2/4+|H|^2/12$ is also positive, so the identity for $\|R\|_{\mathrm{HS}}^2$ implies $\lambda<0$.

Next, take the trace pairing of the metric equation with $D$. By Lemma~\ref{lem:ricci-derivation-orthogonality}, $\tr(\Ric_\mu D)=0$. Since $D$ is self-adjoint, $\tr(D^2)=\|D\|_{\mathrm{HS}}^2$, and therefore $\lambda\tr D+\|D\|_{\mathrm{HS}}^2=-\frac14\tr(H^2D)$. Pairing $\rho(D)H=2\lambda H$ with $H$ and applying Lemma~\ref{lem:representation-contractions} gives $-3\tr(H^2D)=2\lambda|H|^2$. Thus $-\frac14\tr(H^2D)=\lambda|H|^2/6$. Substitution proves the equality in \eqref{eq:nilsoliton-trace}.

It remains to prove that $D\neq0$. Suppose $D=0$. The torsion equation becomes $2\lambda H=0$, so $H=0$ because $\lambda<0$. Nontriviality then gives $\mu\neq0$, and the metric equation reduces to $\Ric_\mu=\lambda\Id$. Since a nonzero nilpotent Lie algebra has nontrivial center, choose a nonzero central vector $Z$ and an orthonormal basis $\{e_i\}$. The terms involving $\mu(Z,e_i)$ in \eqref{eq:moment-map-operator} vanish, so $\bar g(\Ric_\mu Z,Z)=\frac14\sum_{i,j}\bar g(\mu(e_i,e_j),Z)^2\geq0$. This contradicts $\bar g(\Ric_\mu Z,Z)=\lambda|Z|^2<0$. Hence $D\neq0$.

Finally, the equality in \eqref{eq:nilsoliton-trace} gives $\tr D-\frac16|H|^2=\|D\|_{\mathrm{HS}}^2/(-\lambda)>0$, which proves the strict inequality.
\end{proof}

In particular, a nilpotent Lie algebra whose derivations are all traceless admits no nontrivial generalized nilsoliton.

We next characterize harmonic decomposable forms. Neither subspace in the following decomposition is required to be an ideal.

\begin{lemma}\label{lem:harmonic-subalgebra-support}
Let $\nalg=V\oplus W$ be an orthogonal decomposition of a metric nilpotent Lie algebra, and let $\omega$ be a nonzero volume form of $V$, extended by zero on arguments in $W$. Then $\omega$ is harmonic if and only if $V$ and $W$ are Lie subalgebras.
\end{lemma}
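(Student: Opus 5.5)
The plan is to treat closedness and coclosedness separately, with orthonormal bases adapted to the splitting: $e_1,\dots,e_k$ spanning $V$ and $f_1,\dots,f_m$ spanning $W$. Up to a nonzero scalar, $\omega=e^{1}\wedge\cdots\wedge e^{k}$. Since $\nalg$ is nilpotent, $\geomcod=\CEadj$. So harmonicity means $d_\mu\omega=0$ and $\CEadj\omega=0$.

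\emph{Closedness.} By the Leibniz rule,
\[
d_\mu\omega=\sum_r (-1)^{r-1}e^1\wedge\cdots\wedge d_\mu e^r\wedge\cdots\wedge e^k .
\]
Each $d_\mu e^r$ is $-e^r\circ\mu$. In the wedge product only those components of $d_\mu e^r$ survive that contain no other $e^s$ with $s\neq r$. The surviving pieces are therefore of three types:
\begin{itemize}
\item a $V\wedge V$ part with $e_r$ involved, which sums to a multiple of $\tr(\ad_\mu X|_V)$;
\item a $V\wedge W$ part with coefficient $\bar g(\mu(e_r,f_a),e_r)$;
\item a $W\wedge W$ part with coefficient $\bar g(\mu(f_a,f_b),e_r)$.
\end{itemize}
These land in linearly independent form types. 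I expect $d_\mu\omega=0$ to be equivalent to two conditions: the $V$-components of $\mu(W,W)$ vanish, which says $W$ is a subalgebra, together with the trace condition $\sum_r\bar g(\mu(X,e_r),e_r)=0$ for all $X$. The $V\wedge V\wedge V$-type pieces vanish automatically, since a $(k+1)$-form supported on $V$ is zero.

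\emph{Coclosedness.} I will use the pairing \eqref{eq:CE-adjoint-pairing}. The condition $\CEadj\omega=0$ means $\bar g(\omega,d_\mu\alpha)=0$ for all $(k-1)$-forms $\alpha$. Since $\omega$ pairs only with the $e^{1\cdots k}$ component of $d_\mu\alpha$, only $\alpha=e^{1\cdots\hat r\cdots k}$ (times $V$-covectors) and mixed terms matter. Computing $\langle d_\mu(\text{such }\alpha),\omega\rangle$ via the Leibniz rule, the $W$-components enter through $d_\mu f^a$ restricted to $V\wedge V$, that is, through $\bar g(\mu(e_i,e_j),f_a)$. I therefore expect $\CEadj\omega=0$ to be equivalent to $\mu(V,V)\subset V$, i.e. $V$ is a subalgebra. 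There is also a residual term $\sum_s\bar g(\mu(e_s,e_r),e_s)$, which is a trace over $V$.

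\emph{Assembling, and the main obstacle.} The hard step is disposing of the trace terms using nilpotency. If $V$ is a subalgebra, then $\ad_\mu X|_V$ is nilpotent for $X\in V$ (Engel), so its trace vanishes. If $W$ is a subalgebra, then for $X\in W$ the map $\pi_V\circ\ad_\mu X|_V$ is the action of $X$ on $\nalg/W$ via the $W$-module structure. By Engel's theorem applied to $\ad(W)$ acting nilpotently on $\nalg$, the induced action on the quotient module $\nalg/W\cong V$ is nilpotent and hence traceless. With these trace identities the two conditions combine into: $V$ and $W$ are subalgebras.

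For the converse direction one must check that the trace terms I expect do not secretly carry independent information in the ``only if'' direction. They appear as separate coefficients, so the subalgebra conditions are still forced by the non-trace coefficients. I would carefully verify the form-type separation, in particular that the $W\wedge W$ coefficients in $d_\mu\omega$ and the $W$-coefficients in $\CEadj\omega$ are not mixed with the trace terms.
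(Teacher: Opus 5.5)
Your argument is correct in substance, but it handles coclosedness by a different route from the paper. The paper never computes $\CEadj\omega$. It uses the Hodge formula $\geomcod=\pm{*}\,d_\mu{*}$ on invariant forms and notes that $*\omega$ is a multiple of the volume form of $W$, extended by zero on $V$. Harmonicity then reduces to closedness of two complementary volume forms, so one computation is applied twice with $V$ and $W$ swapped. The paper also evaluates $d_\mu\omega$ on argument tuples sorted by how many entries lie in $W$. This makes the ``only if'' direction a single evaluation on $(w_1,w_2,v_1,\ldots,v_{p-1})$, and the form-type separation you flag as delicate becomes automatic.

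Your direct computation of $\CEadj\omega$ through \eqref{eq:CE-adjoint-pairing} avoids orientations and the Hodge star. The price is that you need $\geomcod=\CEadj$ (unimodularity), and you must track which $(k-1)$-forms have differentials with an $e^{1\cdots k}$ component. Only two kinds do: $e^{1\cdots\hat r\cdots k}$, which gives the trace term, and $e^{I}\wedge f^a$ with $|I|=k-2$, which gives $\operatorname{pr}_W\mu(V,V)$. Your coclosedness trace $\tr(\operatorname{pr}_V\circ\ad_\mu X|_V)$, $X\in V$, is the negative of the paper's $\tr(\operatorname{pr}_W\circ\ad_\mu X|_W)$ precisely because $\tr\ad_\mu X=0$. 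Your Engel and quotient-module argument for the trace terms amounts to the paper's identity $\tr(A_w)=\tr(\ad_w)-\tr(\ad_w|_W)=0$.

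One claim in your closedness step is wrong, though it does not affect the final result. No $V\wedge V$ component of $d_\mu e^r$ survives the wedge with the other $k-1$ covectors of $V$; this is the same observation as your remark about $(k+1)$-forms supported on $V$. So your first bullet is empty. The trace term of $d_\mu\omega$ comes from your second bullet: after summing over $r$, the coefficient of $e^{1\cdots k}\wedge f^a$ is a multiple of $\tr(\operatorname{pr}_V\circ\ad_\mu f_a|_V)$. Hence $d_\mu\omega=0$ if and only if $\operatorname{pr}_V\mu(W,W)=0$ and this trace vanishes for $X\in W$.

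Your version with the trace condition ``for all $X$'' is not equivalent to closedness. Take $\mathfrak h_3$ with orthonormal $x,y,z$, $[x,y]=z$, and $V=\operatorname{span}\{x,(y+z)/\sqrt2\}$. Then $\omega$ is closed, but $\tr(\operatorname{pr}_V\circ\ad_\mu x|_V)=\tfrac12$. The trace condition for $X\in V$ belongs to coclosedness, where you do find it. Your assembly only uses three implications: closed $\Rightarrow$ $W$ is a subalgebra, coclosed $\Rightarrow$ $V$ is a subalgebra, and $V,W$ subalgebras $\Rightarrow$ all trace terms vanish. So the conclusion stands once this attribution is corrected.
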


\begin{proof}
Set $p=\dim V$ and $n=\dim\nalg$. Fix an orientation of $\nalg$ and let $*$ be the associated metric Hodge star. By the Hodge formula for the geometric codifferential, $\omega$ is harmonic if and only if $d_\mu\omega=0$ and $d_\mu(*\omega)=0$. If $p=0$ or $p=n$, the forms $\omega$ and $*\omega$ are constants or volume forms, so both are closed. The subalgebra conditions also hold automatically. We may therefore assume $0<p<n$.

Suppose first that $\omega$ is harmonic. Choose $w_1,w_2\in W$ and $v_1,\ldots,v_{p-1}\in V$. In the Chevalley--Eilenberg formula for $(d_\mu\omega)(w_1,w_2,v_1,\ldots,v_{p-1})$, every term except the one involving $[w_1,w_2]$ retains at least one argument in $W$ and hence vanishes. Thus $0=-\omega(\operatorname{pr}_V[w_1,w_2],v_1,\ldots,v_{p-1})$. Since $\omega|_V$ is a nonzero volume form and the vectors $v_1,\ldots,v_{p-1}$ are arbitrary, we obtain $\operatorname{pr}_V[w_1,w_2]=0$. Hence $[W,W]\subset W$.

By orthogonality, $*\omega$ is a nonzero multiple of a volume form of $W$, extended by zero on arguments in $V$. Applying the preceding argument to $*\omega$ and using $d_\mu(*\omega)=0$, we obtain $[V,V]\subset V$.

Conversely, suppose that $V$ and $W$ are subalgebras. We first prove $d_\mu\omega=0$. Components with all arguments in $V$ vanish because $\dim V=p$. For a component with at least two arguments in $W$, every term in the differential also vanishes. Indeed, the bracket of two $W$ vectors lies in $W$, while every other term retains an argument in $W$.

It remains to consider one argument in $W$. For $w\in W$, set $A_w=\operatorname{pr}_V\circ\ad_w|_V$. Since $\omega$ vanishes on arguments in $W$, the differential formula gives $(d_\mu\omega)(w,v_1,\ldots,v_p)=-\sum_{r=1}^p\omega(v_1,\ldots,A_wv_r,\ldots,v_p)$ for $v_1,\ldots,v_p\in V$. Evaluating on a basis of $V$, only the diagonal coefficients of $A_w$ contribute. Their sum is $\tr(A_w)$, so the expression equals $-\tr(A_w)\omega(v_1,\ldots,v_p)$.

Because $w\in W$ and $W$ is a subalgebra, $W$ is invariant under $\ad_w$. The diagonal blocks of $\ad_w$ relative to $V\oplus W$ are therefore $A_w$ and $\ad_w|_W$. Consequently, $\tr(A_w)=\tr(\ad_w)-\tr(\ad_w|_W)$. Nilpotency of $\nalg$ implies that $\ad_w$ and its restriction to $W$ are nilpotent, so both traces vanish. Thus $\tr(A_w)=0$, and $d_\mu\omega=0$ follows.

Interchanging $V$ and $W$ in the same argument proves $d_\mu(*\omega)=0$. Hence $\omega$ is also coclosed and is therefore harmonic.
\end{proof}
Fix a basis $E_1,\ldots,E_n$ with dual basis $E^1,\ldots,E^n$. For an increasing triple $I=\{i_1,i_2,i_3\}$, set $E^I=E^{i_1}\wedge E^{i_2}\wedge E^{i_3}$ and $V_I=\operatorname{span}\{E_i:i\in I\}$. Write $I^c=\{1,\ldots,n\}\setminus I$. Every metric diagonal in this basis makes $V_I$ and $V_{I^c}$ orthogonal, and $E^I$ is a nonzero multiple of the metric volume form of $V_I$, extended by zero on $V_{I^c}$. Lemma~\ref{lem:harmonic-subalgebra-support} therefore shows that $E^I$ is harmonic if and only if both subspaces are subalgebras. This condition is independent of the diagonal metric.

If each nonzero summand of a linear combination $\sum_I h_IE^I$ satisfies these subalgebra conditions, then the sum is harmonic for every diagonal metric. For a fixed metric, however, a sum may be harmonic even when its individual summands are not, because their differentials or codifferentials can cancel.

For the Gram criterion below, we require the Ricci operator and $H^2$ to be diagonal. We first recall the nice basis condition, which ensures diagonality of the Ricci operator. Following \cite[Definition~3]{Ni2011}, a basis $E_1,\ldots,E_n$ is called \emph{nice} if every nonzero bracket $[E_i,E_j]$ is a multiple of one basis vector and distinct nonzero brackets with the same output have disjoint input pairs. Here bracket pairs are counted only with $i<j$. Choose a metric $\bar g$ diagonal in this nice basis and set $e_i=E_i/\sqrt{\bar g(E_i,E_i)}$. Rescaling individual basis vectors preserves niceness, so $e_1,\ldots,e_n$ is an orthonormal nice basis.

Since $\Ric_\mu=M_\mu$, we may use \eqref{eq:moment-map-operator} to compute its off-diagonal entries. For $r\neq s$, the second sum vanishes because a bracket has at most one nonzero output component. A nonzero term $\bar g([e_r,e_i],[e_s,e_i])$ in the first sum would give two nonzero brackets with the same output and a common input index $i$, contrary to niceness. Hence the Ricci operator is diagonal.

Enumerate the nonzero bracket components by $a=1,\ldots,m$. For the $a$-th component $[e_i,e_j]=c_{ij}^k e_k$, with $i<j$, put $c_a=c_{ij}^k$ and $\alpha_a=E_{kk}-E_{ii}-E_{jj}$, where $E_{rr}$ is the diagonal projection onto $\R e_r$. For a diagonal endomorphism $A=\diag(t_1,\ldots,t_n)$, the $a$-th component of $\theta(A)\mu$ is $(t_k-t_i-t_j)c_a=\tr(A\alpha_a)c_a$.

Nilpotency gives $k\notin\{i,j\}$. Indeed, $k=j$ would give a nonzero eigenvalue of $\ad_{e_i}$, while $k=i$ would give one of $\ad_{e_j}$. In \eqref{eq:moment-map-operator}, the first sum contributes $-c_a^2/2$ in each input direction. The two ordered pairs $(i,j)$ and $(j,i)$ in the second sum together contribute $c_a^2/2$ in the output direction. Thus
\begin{equation}\label{eq:nice-Ricci}
\Ric_\mu=\frac12\sum_{a=1}^m c_a^2\alpha_a.
\end{equation}

Let $e^1,\ldots,e^n$ be the dual coframe and write a three-form as $H=\sum_{r=1}^s h_r e^{I_r}$, where the triples $I_r$ are distinct and increasing and every $h_r\neq0$. The sum is empty when $H=0$. Set $\beta_r=-\sum_{i\in I_r}E_{ii}$. For a diagonal endomorphism $A$, the form action satisfies $\rho(A)e^{I_r}=\tr(A\beta_r)e^{I_r}$. By \eqref{eq:tensor-square-components}, $(H^2)_{ii}=\sum_{p,q}H_{ipq}^2=2\sum_{r:\,i\in I_r}h_r^2$. The factor two comes from the two possible orders of the other two indices in each triple.

For $i\neq j$, a nonzero product $H_{ipq}H_{jpq}$ in $(H^2)_{ij}$ requires two distinct triples sharing the indices $p,q$. Hence $|I_r\cap I_t|\leq1$ for $r\neq t$ ensures that $H^2$ is diagonal. Whenever $H^2$ is diagonal, its diagonal entries determine the full operator and give $H^2=-2\sum_{r=1}^s h_r^2\beta_r$.

We now collect the bracket and three-form weights and their actual coefficient squares. Define
\begin{equation}\label{eq:combined-weight-data}
\begin{aligned}
(\Gamma_1,\ldots,\Gamma_N)&=(\alpha_1,\ldots,\alpha_m,\beta_1,\ldots,\beta_s),
\qquad N=m+s,\\
z&=(c_1^2,\ldots,c_m^2,h_1^2,\ldots,h_s^2)^{\mathsf T},\\
G_{\mu,H}&=(\tr(\Gamma_p\Gamma_q))_{p,q=1}^N.
\end{aligned}
\end{equation}
The weights are self-adjoint diagonal endomorphisms, so $\tr(\Gamma_p\Gamma_q)$ is their Hilbert--Schmidt inner product. Thus $G_{\mu,H}$ is their Gram matrix. It depends only on the indices of the nonzero bracket and three-form components, while $z$ records their actual squared coefficients in the chosen orthonormal basis.

If $H^2$ is diagonal, its expression above and \eqref{eq:nice-Ricci} give
\begin{equation}\label{eq:combined-moment-map}
\Ric_\mu-\frac14H^2=\frac12\sum_{p=1}^N z_p\Gamma_p.
\end{equation}
The next criterion tests the vector $z$ determined by the given bracket, metric and three-form in this basis, as in the classical criterion of \cite[Theorem~1]{Pa2010}. Write $\one_N$ for the all-ones vector in $\R^N$.

\begin{proposition}\label{thm:augmented}
Let $(\nalg,\mu,\bar g)$ be a metric nilpotent Lie algebra with an orthonormal nice basis. Suppose that $H$ is closed, $H^2$ is diagonal in this basis, and $(\mu,H)\neq(0,0)$. Let $z$ and $G_{\mu,H}$ be as in \eqref{eq:combined-weight-data}. Then $(\mu,H)$ is a generalized nilsoliton if and only if, for some $\kappa>0$,
\begin{equation}\label{eq:augmented-gram}
\CEadj H=0,
\qquad
G_{\mu,H}z=\kappa\one_N.
\end{equation}
In that case, $\lambda=-\kappa/2$ and $D=\Ric_\mu-\frac14H^2+\frac\kappa2\Id$.
\end{proposition}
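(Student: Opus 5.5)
By Corollary~\ref{cor:reduced-soliton-system}, $(\mu,H)$ is a generalized nilsoliton exactly when $\CEadj H=0$ and there are $\lambda$ and a symmetric derivation $D$ with $R:=\Ric_\mu-\frac14H^2=\lambda\Id+D$ and $\rho(D)H=2\lambda H$. The plan is to translate these two conditions into statements about diagonal endomorphisms tested against the weights $\Gamma_p$. By \eqref{eq:combined-moment-map}, $R=\frac12\sum_p z_p\Gamma_p$ is diagonal, and $\tr(R\Gamma_q)=\frac12(G_{\mu,H}z)_q$. For a diagonal $A$, the formulas recorded before the statement give two facts. First, $\theta(A)\mu=0$ if and only if $\tr(A\alpha_a)=0$ for all $a$, since $c_a\neq0$ and distinct components occupy distinct slots. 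Second, $\rho(A)H=\kappa' H$ if and only if $\tr(A\beta_r)=\kappa'$ for all $r$, since the $e^{I_r}$ are independent and $h_r\neq0$.

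For the direction ($\Leftarrow$), assume $\CEadj H=0$ and $G_{\mu,H}z=\kappa\one_N$ with $\kappa>0$. Set $\lambda=-\kappa/2$ and $D=R+\frac\kappa2\Id$, which is diagonal and hence symmetric.
\begin{itemize}
\item For the bracket weights, $\tr(\Id\,\alpha_a)=1-2=-1$. Hence $\tr(D\alpha_a)=\frac\kappa2-\frac\kappa2=0$, so $\theta(D)\mu=0$ and $D$ is a derivation.
\item For the torsion weights, $\tr(\Id\,\beta_r)=-3$. Hence $\tr(D\beta_r)=\frac\kappa2-\frac{3\kappa}2=-\kappa=2\lambda$, so $\rho(D)H=2\lambda H$.
\end{itemize}
Corollary~\ref{cor:reduced-soliton-system} then gives the soliton, with the asserted $\lambda$ and $D$.

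For the direction ($\Rightarrow$), the corollary supplies $\CEadj H=0$ together with $\lambda$ and $D$. Since $R$ is diagonal, $D=R-\lambda\Id$ is diagonal in the nice basis. The two facts above give $\tr(D\alpha_a)=0$ and $\tr(D\beta_r)=2\lambda$. Therefore $\tr(R\alpha_a)=\lambda\tr(\alpha_a)=-\lambda$ and $\tr(R\beta_r)=-3\lambda+2\lambda=-\lambda$. Thus $G_{\mu,H}z=-2\lambda\one_N$. Proposition~\ref{prop:nilsoliton-trace} gives $\lambda<0$ by nontriviality, so $\kappa=-2\lambda>0$.

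The main care point is the ``only if'' use of the component facts. One must check that $\theta(A)\mu=0$ for diagonal $A$ really forces each $\tr(A\alpha_a)=0$. Nice-basis bookkeeping of components is what guarantees that each coefficient $c_a$ sits in its own component, so no cancellation between components occurs. The analogous point for $\rho(A)H$ is ensured by the linear independence of the distinct $e^{I_r}$. Everything else is linear bookkeeping.
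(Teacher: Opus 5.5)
Your proposal is correct and follows the paper's proof essentially step for step. You reduce via Corollary~\ref{cor:reduced-soliton-system}, translate the derivation and torsion conditions on the diagonal map $D=R-\lambda\Id$ into $\tr(D\alpha_a)=0$ and $\tr(D\beta_r)=2\lambda$, use $\tr\alpha_a=-1$ and $\tr\beta_r=-3$ to obtain $G_{\mu,H}z=-2\lambda\one_N$, and get $\kappa>0$ from Proposition~\ref{prop:nilsoliton-trace}.

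One minor remark: the componentwise independence you attribute to niceness actually holds in any basis, since a diagonal $A$ rescales each structure constant $c_{ij}^k$ separately. Niceness is really needed only to make $\Ric_\mu$, and hence $R$, diagonal.
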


\begin{proof}
Set $R=\Ric_\mu-\frac14H^2$. By \eqref{eq:combined-moment-map}, $R$ is diagonal and $(G_{\mu,H}z)_p=2\tr(R\Gamma_p)$. For any $\lambda\in\R$, the diagonal map $D=R-\lambda\Id$ is a derivation if and only if $\tr(D\alpha_a)=0$ for every bracket component. The torsion equation $\rho(D)H=2\lambda H$ holds if and only if $\tr(D\beta_r)=2\lambda$ for every form component. Since $\tr\alpha_a=-1$ and $\tr\beta_r=-3$, these two conditions are equivalent to $\tr(R\Gamma_p)=-\lambda$ for every $p$, or $G_{\mu,H}z=-2\lambda\one_N$.

For a generalized nilsoliton, Corollary~\ref{cor:reduced-soliton-system} gives $\CEadj H=0$, and Proposition~\ref{prop:nilsoliton-trace} gives $\lambda<0$. Thus \eqref{eq:augmented-gram} holds with $\kappa=-2\lambda$. Conversely, if \eqref{eq:augmented-gram} holds, set $\lambda=-\kappa/2$ and $D=R-\lambda\Id$. The preceding equivalences give both equations in \eqref{eq:reduced-soliton-system}, and Corollary~\ref{cor:reduced-soliton-system} applies.
\end{proof}

Coclosedness cannot be omitted from Proposition~\ref{thm:augmented}. On $\mathfrak h_3\oplus\R$, take an orthonormal coframe with $d_\mu e^3=-e^{12}$ and all other $d_\mu e^i=0$, and set $H=e^{124}$ \cite[Example~5.5]{ChChZh2026}. This form is closed and $H^2$ is diagonal. The Gram matrix is $G=\left(\begin{smallmatrix}3&2\\2&3\end{smallmatrix}\right)$ and $z=(1,1)^{\mathsf T}$, so $Gz=5\one_2$. However, $d_\mu e^{34}=-e^{124}$ gives $\CEadj H=-e^{34}\neq0$.

We now fix the Lie bracket and three-form and allow the diagonal metric to vary. Under the following hypotheses, an abstract positive Gram solution determines a metric for which the actual coefficient squares satisfy \eqref{eq:augmented-gram}. The proof uses the convexity arguments underlying the classical criteria \cite{Pa2010,Ni2011,Fe2013}. We identify diagonal endomorphisms with Euclidean $\R^n$, with inner product $\langle A,B\rangle=\tr(AB)$.

\begin{theorem}
Fix a nilpotent Lie bracket $\mu$ in a nice basis $E_1,\ldots,E_n$, with dual basis $E^1,\ldots,E^n$. Let $\mathcal I$ be a set of triples such that $V_I$ and $V_{I^c}$ are subalgebras for every $I\in\mathcal I$, and $|I\cap J|\leq1$ for distinct $I,J\in\mathcal I$. Fix $H_0=\sum_{I\in\mathcal I}h_IE^I$ with all listed coefficients nonzero, and assume $(\mu,H_0)\neq(0,0)$. The set $\mathcal I$ may be empty. Let $\Gamma_1,\ldots,\Gamma_N$ and $G$ be the weights and Gram matrix defined in \eqref{eq:combined-weight-data}. The following conditions are equivalent:
\begin{enumerate}
\item There exists a positive-definite metric $g$, diagonal in the prescribed basis, such that $(g,H_0)$ is a generalized nilsoliton on $(\nalg,\mu)$.
\item The system $Gx=\one_N$ has a solution with all entries strictly positive.
\item The point of $C=\operatorname{conv}\{\Gamma_1,\ldots,\Gamma_N\}$ closest to the origin belongs to $\operatorname{relint}C$.
\end{enumerate}
\end{theorem}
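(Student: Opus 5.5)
The plan is to reduce everything to Proposition~\ref{thm:augmented} and then run the classical convexity argument for the Gram system.

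\textbf{Setup.} For a diagonal metric $g$ with $g(E_i,E_i)=t_i^{-2}$, set $e_i=t_iE_i$. This is an orthonormal nice basis. A bracket coefficient $[E_i,E_j]=C E_k$ becomes $c_a=Ct_it_j/t_k$. A form coefficient $h_IE^I$ becomes $h_I\prod_{i\in I}t_i^{-1}$ times $e^I$. Writing $t=\exp(\diag(s))$ with $s\in\R^n$, the squared coefficients satisfy
\[
z_p(s)=z_p(0)\,e^{-2\langle \Gamma_p,s\rangle}.
\]
This holds uniformly for both kinds of weight, since $\alpha_a=E_{kk}-E_{ii}-E_{jj}$ and $\beta_r=-\sum_{i\in I}E_{ii}$.

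The hypotheses control the other ingredients independently of $s$. By Lemma~\ref{lem:harmonic-subalgebra-support}, $H_0$ is harmonic for every diagonal metric, so $\CEadj H_0=0$ and $H_0$ is closed. The intersection condition makes $H_0^2$ diagonal. Thus Proposition~\ref{thm:augmented} applies: condition (1) holds if and only if some $s$ gives $Gz(s)=\kappa\one_N$ with $\kappa>0$. Since $z(s)>0$ entrywise, (1) implies (2) after dividing by $\kappa$. The positivity of $\kappa$ comes from Proposition~\ref{prop:nilsoliton-trace}; alternatively, pair with $z$ and use $z^{\mathsf T}Gz=|\sum z_p\Gamma_p|^2>0$, which holds because $\tr\Gamma_p<0$.

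\textbf{(2)$\Leftrightarrow$(3).} This is standard. Suppose $Gx=\one$ with $x>0$. Put $P=\sum x_p\Gamma_p/\sum x_p\in\operatorname{relint}C$. Then $\langle P,\Gamma_q\rangle=1/\sum x_p$ is constant in $q$, hence $\langle P,Q-P\rangle=0$ for all $Q\in C$, so $P$ is the minimal-norm point. Conversely, let the nearest point $P$ lie in the relative interior. The first-order condition $\langle P,Q-P\rangle\ge0$ on $C$ becomes an equality along all directions in the affine hull, which gives $\langle P,\Gamma_q\rangle=|P|^2$ for all $q$. Moreover $P\neq0$, since every $\Gamma_q$ has trace $-1$ or $-3$ and so $\tr<0$ on $C$. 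Writing $P=\sum y_p\Gamma_p$ with all $y_p>0$ then gives $G(y/|P|^2)=\one$.

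\textbf{(2)$\Rightarrow$(1), the main obstacle.} The plan is to realize $x$, up to scale, as some $z(s)$ modulo $\ker G$. The relevant requirement is that $\sum_p z_p(s)\Gamma_p=\sum_p x_p\Gamma_p=:\Pi$, because then $Gz(s)=Gx=\one$, and only $\sum z_p\Gamma_p$ enters \eqref{eq:combined-moment-map}.

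To achieve this, consider the convex function
\[
F(s)=\sum_p z_p(0)e^{-2\langle\Gamma_p,s\rangle}+2\langle \Pi,s\rangle
\]
on $\R^n$. Its gradient is $-2\sum z_p(s)\Gamma_p+2\Pi$, so a critical point is exactly what is needed. I would show that $F$ attains its minimum. Decompose $s=s_0+s_1$ with $s_1\perp\operatorname{span}\{\Gamma_p\}$. Then $F$ depends only on $s_0$: the term $\langle\Pi,s\rangle$ does not see $s_1$ because $\Pi$ lies in the span. Restricted to $s_0$, the function is coercive. Indeed, along any ray $s_0=r v$ with $v$ in the span and $v\neq0$, we have $\langle\Pi,v\rangle=\sum x_p\langle\Gamma_p,v\rangle$ with all $x_p>0$. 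So either some $\langle\Gamma_p,v\rangle<0$, and the exponential term blows up, or all $\langle\Gamma_p,v\rangle\ge0$ with at least one strictly positive, since $v$ is not orthogonal to the span. In the second case $\langle\Pi,v\rangle>0$ and the linear term grows.

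Carrying out this coercivity cleanly, including uniformity over the unit sphere by compactness, is the step needing most care. The positivity of $x$ is exactly what makes it work. The minimizer $s$ then gives the metric, and Proposition~\ref{thm:augmented} yields the generalized nilsoliton with $\kappa=1$.
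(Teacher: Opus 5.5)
Your proposal is correct, and its convexity core is the same as the paper's, but you organize it differently.

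The paper proves the cycle (1)$\Rightarrow$(2)$\Rightarrow$(3)$\Rightarrow$(1). In the last step it takes the closest point $b$, uses $b\perp\mathcal U$ and $\Gamma_p-b\in\mathcal U$, and minimizes $f(a)=\sum_pz_p^0e^{2\langle\Gamma_p-b,a\rangle}$ over $\mathcal U$. Coercivity comes from $0\in\operatorname{relint}\operatorname{conv}\{\Gamma_p-b\}$, and the critical point gives $Gz=\kappa\one_N$ with $\kappa=(\sum_pz_p)|b|^2$.

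You go from (2) to (1) directly. You work on the whole span of the weights, the linear term $2\langle\Pi,s\rangle$ fixes the scale, and positivity of $x$ supplies coercivity. The critical point realizes $\sum_pz_p(s)\Gamma_p=\Pi$ exactly, so your metric satisfies $\Ric_\mu-\frac14H_0^2=\frac12\sum_px_p\Gamma_p$ with $\kappa=1$.

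The two variational problems are in fact equivalent. Put $b=\Pi/\sum_px_p$ and $s=-(u+\tau b)$ with $u\in\mathcal U$. Then $F=e^{2\tau|b|^2}f(u)-2\tau$, and minimizing out $\tau$ leaves $|b|^{-2}\bigl(1+\log(|b|^2f(u))\bigr)$.

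What each version buys:
\begin{itemize}
\item Yours realizes any prescribed positive solution $x$ directly.
\item The paper's avoids the extra scale variable. Because it never proves (3)$\Rightarrow$(2) separately, it also does not need the fact your argument uses there: a relative-interior point of a polytope is a strictly positive convex combination of all its generators.
\end{itemize}

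One correction to your setup. Since $e^i=t_i^{-1}E^i$, the form term is $h_IE^I=h_I\bigl(\prod_{i\in I}t_i\bigr)e^I$, not $h_I\prod_{i\in I}t_i^{-1}e^I$. Only with this sign do the three-form weights follow the same law $z_p(s)=z_p(0)e^{-2\langle\Gamma_p,s\rangle}$ as the bracket weights. Your gradient formula for $F$ relies on that.
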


\begin{proof}
For every metric diagonal in the prescribed basis, Lemma~\ref{lem:harmonic-subalgebra-support} makes $H_0$ harmonic. Niceness makes the Ricci operator diagonal, and the intersection condition makes $H_0^2$ diagonal. If the metric is a soliton, Proposition~\ref{thm:augmented} gives $Gz=\kappa\one_N$ for its actual coefficient squares $z>0$ and some $\kappa>0$. Taking $x=z/\kappa$ proves (1)$\Rightarrow$(2).

Set $\mathcal U=\operatorname{span}\{\Gamma_p-\Gamma_q\}$. Suppose $Gx=\one_N$ with $x>0$. Set $s=\sum_p x_p$ and $b=s^{-1}\sum_p x_p\Gamma_p$. Then $b\in\operatorname{relint}C$ and $\langle\Gamma_p,b\rangle=1/s$ for every $p$. Taking the convex combination with coefficients $x_p/s$ gives $|b|^2=1/s$, so $b\perp\mathcal U$. Consequently, $|y|^2=|b|^2+|y-b|^2$ for every $y\in C$. This proves (2)$\Rightarrow$(3).

It remains to prove (3)$\Rightarrow$(1). Let $b$ be the closest point, and suppose $b\in\operatorname{relint}C$. Varying $b$ in both directions along $\mathcal U$ gives $b\perp\mathcal U$, and $\operatorname{aff}C=b+\mathcal U$. Also, $b\neq0$, since every weight has coordinate sum $-1$ or $-3$ and hence $0\notin C$.

Let $g_0$ make $E_1,\ldots,E_n$ orthonormal, and let $z_p^0>0$ be the corresponding coefficient squares. For $a=(a_1,\ldots,a_n)\in\R^n$, define $g_a(E_i,E_j)=e^{2a_i}\delta_{ij}$. Its orthonormal basis is $e_i=e^{-a_i}E_i$. Thus the bracket coefficient $c_{ij}^k$ is multiplied by $e^{a_k-a_i-a_j}$, while the form coefficient $h_I$ is multiplied by $e^{-\sum_{i\in I}a_i}$. The actual coefficient squares of the fixed pair $(\mu,H_0)$ with metric $g_a$ are therefore
\begin{equation}\label{eq:diagonal-coefficient-scaling}
z_p(a)=z_p^0\exp\bigl(2\langle\Gamma_p,a\rangle\bigr),
\qquad 1\leq p\leq N.
\end{equation}
We seek $a$ for which these coefficients satisfy the Gram equation. On $\mathcal U$, define
\begin{equation}\label{eq:diagonal-convex-function}
f(a)=\sum_{p=1}^N z_p^0\exp\bigl(2\langle\Gamma_p-b,a\rangle\bigr).
\end{equation}
Since $b\in\operatorname{relint}C$, the convex hull of the vectors $\Gamma_p-b$ contains zero in its interior relative to $\mathcal U$. If $\mathcal U\neq0$, compactness of its unit sphere gives $\epsilon>0$ such that $\max_p\langle\Gamma_p-b,u\rangle\geq\epsilon$ for every unit $u\in\mathcal U$. Hence $f(a)\geq(\min_p z_p^0)e^{2\epsilon|a|}$, so $f$ attains a minimum on $\mathcal U$. If $\mathcal U=0$, take $a=0$.

Choose a minimizer $a\in\mathcal U$. Since $b\perp\mathcal U$, the summands in \eqref{eq:diagonal-convex-function} are exactly the actual coefficient squares $z_p(a)$ in \eqref{eq:diagonal-coefficient-scaling}. The gradient lies in $\mathcal U$, so minimality gives $\sum_p z_p(a)(\Gamma_p-b)=0$. Pairing this equation with each $\Gamma_q$ and using $\langle\Gamma_q,b\rangle=|b|^2$, we obtain
\begin{equation}\label{eq:diagonal-realized-gram}
Gz(a)=\kappa\one_N,
\qquad
\kappa=\left(\sum_p z_p(a)\right)|b|^2>0.
\end{equation}
As noted at the start of the proof, $H_0$ is harmonic and both the Ricci operator and $H_0^2$ are diagonal for $g_a$. Thus \eqref{eq:diagonal-realized-gram} and Proposition~\ref{thm:augmented} show that $(g_a,H_0)$ is a generalized nilsoliton on the fixed Lie algebra.
\end{proof}

\section{Generalized nilsolitons on Lie algebras admitting no classical nilsoliton}\label{sec:tight-frame-existence}

In this section, we construct generalized nilsolitons with nonzero torsion on three-step nilpotent Lie algebras admitting no classical nilsoliton. We first define the Lie algebras and exclude classical nilsoliton metrics. We then construct the generalized nilsolitons and prove indecomposability, obtaining a seven-dimensional example and families of dimension $6m+d$ for $m>d\geq1$.

Let $m\geq d\geq1$ be integers, and let $W$ be a real vector space of dimension $d$. Choose nonzero vectors $v_1,\ldots,v_m$ that span $W$. Define
\begin{equation}\label{eq:configuration-bracket}
\begin{gathered}
\nalg(v)=W\oplus\bigoplus_{i=1}^m\operatorname{span}\{X_i,Y_i,Z_i,P_i,Q_i,T_i\}, \\ [X_i,Y_i]=P_i, [Y_i,Z_i]=Q_i, [X_i,Z_i]=v_i, [X_i,Q_i]=T_i, [Z_i,P_i]=-T_i,
\end{gathered}
\end{equation}
with all unlisted brackets zero. Distinct blocks commute, and $W$ and the $T_i$ are central. The only potentially nonzero Jacobi identity is that for $X_i,Y_i,Z_i$, whose two nonzero terms are $T_i$ and $-T_i$. Thus \eqref{eq:configuration-bracket} defines a Lie algebra.

Set $V_1=\bigoplus_i\operatorname{span}\{X_i,Y_i,Z_i\}$, $V_2=W\oplus\bigoplus_i\operatorname{span}\{P_i,Q_i\}$, and $V_3=\bigoplus_i\R T_i$. These subspaces give a positive grading with $[V_r,V_s]\subset V_{r+s}$. Writing $\nalg(v)^1=\nalg(v)$ and $\nalg(v)^{r+1}=[\nalg(v),\nalg(v)^r]$, we have $\nalg(v)^2=V_2\oplus V_3$, $\nalg(v)^3=V_3$, and $\nalg(v)^4=0$. Moreover, $Z(\nalg(v))=W\oplus V_3$. Indeed, commuting a central vector with $Y_i$ eliminates its $X_i$- and $Z_i$-components. Commuting it with $X_i$ and $Z_i$ then eliminates its $Y_i$-, $Q_i$-, and $P_i$-components. In particular, $\nalg(v)$ is three-step nilpotent and has dimension $6m+d$.

\begin{proposition}\label{prop:configuration-not-Einstein}
For every nonzero spanning configuration $v_1,\ldots,v_m$, the Lie algebra $\nalg(v)$ admits no classical nilsoliton metric.
\end{proposition}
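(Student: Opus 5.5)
I would use Nikolayevsky's pre-Einstein derivation criterion \cite{Ni2011} together with the convexity reasoning behind the Gram criterion of Section~\ref{sec:weights}, applied with $H=0$.

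\textbf{Step 1: the derivation torus and the pre-Einstein derivation.} I first compute a maximal torus of $\R$-diagonalizable derivations of $\nalg(v)$. On the $i$-th block I take diagonal derivations with eigenvalues $x_i,y_i,z_i$ on $X_i,Y_i,Z_i$. These force $x_i+y_i$ on $P_i$, $y_i+z_i$ on $Q_i$, $x_i+y_i+z_i$ on $T_i$, and require $v_i$ to be an eigenvector of the induced map on $W$ with eigenvalue $x_i+z_i$. The two brackets landing on $T_i$ give the same weight, so the only constraints come from linear relations among the $v_i$ in $W$. I would then solve $\tr(\varphi\psi)=\tr\psi$ for all $\psi$ in the torus. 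For a single block with $W=\R v_1$, the trace coefficients of $x,y,z$ give the system $4a+2b+2c=2a+4b+2c=2a+2b+4c=4$, so $a=b=c=\tfrac12$. I expect the general case to yield a positive $\varphi$, a multiple of the grading $V_1\oplus V_2\oplus V_3$ adjusted on $W$. Positivity of $\varphi$ therefore gives no obstruction, and the obstruction must come from the finer condition.

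\textbf{Step 2: reduction to a metric adapted to the grading.} Suppose $\Ric=\lambda\Id+D$ for some inner product. By uniqueness of the nilsoliton derivation up to conjugation and scaling, $D$ is a positive multiple of a conjugate of $\varphi$. After an automorphism, I may assume the $D$-eigenspaces $V_1,V_2,V_3$, refined inside $W$ if needed, are mutually orthogonal. Then $\Ric$ preserves this decomposition and acts by scalars on each $D$-eigenspace.

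\textbf{Step 3: the obstruction from an affine relation among bracket weights.} In each block the five bracket weights satisfy
\[
\alpha_{X_iQ_i}^{T_i}+\alpha_{Y_iZ_i}^{Q_i}=T_i-X_i-Y_i-Z_i=\alpha_{Z_iP_i}^{T_i}+\alpha_{X_iY_i}^{P_i}.
\]
This two-against-two dependence is what I would exploit. I apply the moment-map identity \eqref{eq:moment-map-identity}, $4\tr(\Ric A)=\bar g(\theta(A)\mu,\mu)$, to a suitable non-derivation symmetric endomorphism $A$. My candidate is the map acting by $+1$ on $P_i$ and $-1$ on $Q_i$, possibly combined with the corresponding rescaling of $Y_i$ so that it becomes a derivation of the $T_i$-part. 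Because $\Ric$ is scalar on the $D$-eigenspaces and $\lambda\Id+D$ pairs to zero with $A$, the left-hand side vanishes. On the right-hand side, I would split the bracket norms into the $P$-, $Q$-, $T$- and $W$-components. The components $\bigl\|[X_i,Z_i]\bigr\|^2$ into $W$, together with the mixing of the $v_i$ when $m>d$, should force a strictly signed expression. The intended contradiction is a strict inequality between $\bigl\|[X_i,Q_i]\bigr\|$ and $\bigl\|[Z_i,P_i]\bigr\|$ that is incompatible with the identities $\tr(\Ric\,\psi)=0$ from Lemma~\ref{lem:ricci-derivation-orthogonality}. In the nice case $m=d$ with the $v_i$ a basis, this step should reduce to checking, via the Gram criterion of \cite{Pa2010}, that $Gx=\one$ has no strictly positive solution; the relation above should force the coefficient of $[X_i,Z_i]=v_i$ to be nonpositive.

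The main obstacle is Step 3 when $m>d$. There the basis is not nice, off-diagonal Ricci terms coming from the non-orthogonal $v_i$ appear, and the choice of test endomorphism $A$ must absorb them. I would first settle the seven-dimensional case $m=d=1$ and verify it by an explicit Gram computation. I would then try to handle general $v$ by projecting onto each block with the block-wise test endomorphisms and summing over $i$.
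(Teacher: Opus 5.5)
\textbf{There is a genuine gap in Step~3.} It contains no actual argument, and the test endomorphism you propose cannot supply one. Let $A$ act by $+1$ on $P_i$ and by $-1$ on $Q_i$. Once $\Ric$ is scalar on $V_2$, $\tr(\Ric A)=0$ automatically. For a metric diagonal in the block basis, \eqref{eq:moment-map-identity} then only yields $|[x_i,y_i]|^2+|[x_i,q_i]|^2=|[y_i,z_i]|^2+|[z_i,p_i]|^2$, which is the statement $\Ric_{p_ip_i}=\Ric_{q_iq_i}$. This is not contradictory. It holds for the metric of Theorem~\ref{thm:tight-frame-construction}, where both sides equal $9m+7d$. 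It also holds for every solution of the Gram system: for $m=d=1$, ordering the weights as $XY\to P$, $YZ\to Q$, $XZ\to W$, $XQ\to T$, $ZP\to T$, the solutions of $Gx=\one$ are $x=(\tfrac14,\tfrac14,0,\tfrac14,\tfrac14)+s(1,-1,0,-1,1)$. What actually fails there is that the coefficient of $[X,Z]$ is forced to be $0$. That is a statement about the $W$-direction, which your $P/Q$ test never sees. For $m>d$, where the basis is not nice, you propose nothing concrete.

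\textbf{The missing idea is already in your Step~1.} Your values $a=b=c=\tfrac12$ give $\varphi$ the eigenvalue $\tfrac12+\tfrac12=1$ on $W=\R v_1$, and you discarded this too quickly.
\begin{enumerate}
\item In general, $\varphi=\tfrac r2\Id$ on $V_r$ is a derivation for every configuration, and it is pre-Einstein. For an arbitrary derivation $A$ with diagonal entries $a_F$, the brackets $[Y_i,Z_i]=Q_i$ and $[X_i,Q_i]=T_i$ give $a_{Q_i}=a_{Y_i}+a_{Z_i}$ and $a_{T_i}=a_{X_i}+a_{Y_i}+a_{Z_i}$. Hence $\tr((\varphi-\Id)A)=0$. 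The configuration-dependent $W$-entries never enter, because $\varphi-\Id$ vanishes on $V_2$, so no torus computation is needed.
\item In Step~2 you must also fix the scale, not just "a positive multiple". With $c=-\lambda$, Lemma~\ref{lem:ricci-derivation-orthogonality} gives $\tr((D/c)A)=\tr A$ for all derivations $A$. So $D/c$ is itself pre-Einstein, and by \cite{Ni2011} $D=cF\varphi F^{-1}$ for some $F\in\operatorname{Aut}(\nalg)$.
\item Hence $\Ric=cF(\varphi-\Id)F^{-1}$ vanishes on $F(W)$, which is a nonzero subspace of $Z(\nalg)\cap[\nalg,\nalg]$. For $0\neq w$ there, centrality kills the negative term in \eqref{eq:moment-map-operator}. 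Since $w\in[\nalg,\nalg]$, this gives $g(\Ric w,w)=\tfrac14\sum_{a,b}g([e_a,e_b],w)^2>0$, a contradiction.
\end{enumerate}
This is the paper's proof. The obstruction is not the sign of $\varphi$ but its eigenvalue $1$ on a central derived direction. The argument is uniform in $m\geq d$, and there is no niceness condition or off-diagonal Ricci term to control.
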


\begin{proof}
Set $\nalg=\nalg(v)$. We first determine a pre-Einstein derivation of $\nalg$. Recall that a derivation $\phi$ is pre-Einstein if it is diagonalizable over $\R$ and satisfies $\tr(\phi A)=\tr A$ for every derivation $A$. Define $\phi|_{V_r}=(r/2)\Id_{V_r}$ for $r=1,2,3$. For $x\in V_r$ and $y\in V_s$, the grading gives $\phi[x,y]=((r+s)/2)[x,y]=[\phi x,y]+[x,\phi y]$. Hence $\phi$ is a derivation and is diagonalizable over $\R$.

Choose a basis of $W$ and append the displayed block vectors. Let $A$ be an arbitrary derivation, and denote by $a_F$ the coefficient of $F$ in $AF$ for each basis vector $F$. We do not assume that $A$ preserves the grading subspaces. Applying $A$ to $[Y_i,Z_i]=Q_i$ gives $AQ_i=[AY_i,Z_i]+[Y_i,AZ_i]$. By \eqref{eq:configuration-bracket}, only the $Y_i$-component of $AY_i$ and the $Z_i$-component of $AZ_i$ contribute to the $Q_i$-component. Thus $a_{Q_i}=a_{Y_i}+a_{Z_i}$.

Similarly, applying $A$ to $[X_i,Q_i]=T_i$ gives $AT_i=[AX_i,Q_i]+[X_i,AQ_i]$. The $T_i$-component of the first term is $a_{X_i}T_i$, and that of the second is $a_{Q_i}T_i$. Although $[Z_i,P_i]=-T_i$, this bracket cannot contribute to either term, since one input is fixed as $Q_i$ or $X_i$. Therefore
\begin{equation}\label{eq:configuration-derivation-traces}
a_{Q_i}=a_{Y_i}+a_{Z_i},\qquad
a_{T_i}=a_{X_i}+a_{Q_i}=a_{X_i}+a_{Y_i}+a_{Z_i}.
\end{equation}
The operator $\phi-\Id$ is diagonal in the chosen basis, with eigenvalues $-1/2$, $0$, and $1/2$ on $V_1,V_2,V_3$, respectively. Hence only the diagonal coefficients of $A$ enter $\tr((\phi-\Id)A)$. By \eqref{eq:configuration-derivation-traces}, we obtain $\tr((\phi-\Id)A)=-\frac12\sum_i(a_{X_i}+a_{Y_i}+a_{Z_i})+\frac12\sum_i a_{T_i}=0$. Thus $\tr(\phi A)=\tr A$ for every derivation $A$, and $\phi$ is pre-Einstein.

Suppose now that $\nalg$ admits a classical nilsoliton metric $g$. Since $\nalg$ is nonabelian, Proposition~\ref{prop:nilsoliton-trace} with $H=0$ gives $\Ric=-c\Id+D$ for some $c>0$ and derivation $D$. For every derivation $A$, Lemma~\ref{lem:ricci-derivation-orthogonality} gives $0=\tr(\Ric A)=-c\tr A+\tr(DA)$. It follows that $\tr((D/c)A)=\tr A$. Moreover, $D=\Ric+c\Id$ is symmetric with respect to $g$, so $D/c$ is diagonalizable over $\R$. Therefore $D/c$ is also pre-Einstein. By \cite[Theorem~1(1)(b)]{Ni2011}, there exists $F\in\operatorname{Aut}(\nalg)$ such that $D/c=F\phi F^{-1}$.

Choose $0\neq w_0\in W$ and set $w=Fw_0$. Since $\phi w_0=w_0$, we have $Dw=cF\phi w_0=cw$, and hence $\Ric w=0$. On the other hand, $W\subset Z(\nalg)\cap[\nalg,\nalg]$, and automorphisms preserve both the center and the derived algebra. Thus $0\neq w\in Z(\nalg)\cap[\nalg,\nalg]$.

Let $\{e_a\}$ be a $g$-orthonormal basis. Since $w$ is central, the negative term in \eqref{eq:moment-map-operator} vanishes, giving $g(\Ric w,w)=\frac14\sum_{a,b}g([e_a,e_b],w)^2$. If this sum were zero, then $w$ would be orthogonal to every $[e_a,e_b]$, and hence to $[\nalg,\nalg]$. Since $w\in[\nalg,\nalg]$, this would imply $g(w,w)=0$, contrary to $w\neq0$. Therefore $g(\Ric w,w)>0$, contradicting $\Ric w=0$.
\end{proof}

We now construct generalized nilsoliton metrics on these Lie algebras. Give $W$ a Euclidean inner product $\langle\cdot,\cdot\rangle_0$. A unit-norm tight frame is a collection satisfying
\begin{equation}\label{eq:tight-frame}
|v_i|_0=1,\qquad \sum_{i=1}^m v_i\otimes v_i=\frac md\Id_W,
\end{equation}
where $(v\otimes v)w=\langle v,w\rangle_0v$. For another metric $g$ on $W$, write $(v\otimes_g v)w=g(v,w)v$. The vectors $v_i$ need not lie in coordinate lines, so we compute the complete Ricci operator directly.

\begin{theorem}\label{thm:tight-frame-construction}
Suppose \eqref{eq:tight-frame} holds. Set $a=5m+4d$, $b=4m+3d$, and $c_0=2d$. On \eqref{eq:configuration-bracket}, make the displayed block vectors mutually orthogonal and orthogonal to $W$. Give $X_i,Y_i,Z_i$ length one, give $P_i,Q_i$ squared length $a$, give $T_i$ squared length $ab$, and set $g|_W=c_0\langle\cdot,\cdot\rangle_0$. Let $y_i^*,p_i^*,q_i^*$ be dual to the corresponding normalized block vectors. Then
\begin{equation}\label{eq:frame-torsion}
H=2\sqrt{m+d}\sum_{i=1}^m y_i^*\wedge p_i^*\wedge q_i^*
\end{equation}
is harmonic and defines a generalized nilsoliton with $\lambda=-(10m+9d)$. The Ricci operator, $H^2$, and the symmetric derivation $D$ act by the scalars in Table~\ref{tab:frame-tensors}.
\end{theorem}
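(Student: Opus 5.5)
The plan is to verify the reduced system of Corollary~\ref{cor:reduced-soliton-system} directly: closedness and coclosedness of $H$, then the metric equation with an explicitly diagonal $D$, then the torsion weight condition. In the normalized frame $x_i,y_i,z_i,p_i,q_i,t_i$ (together with an orthonormal basis of $(W,g|_W)$), the structure constants are rescaled versions of \eqref{eq:configuration-bracket}:
\[
[x_i,y_i]=a^{-1/2}p_i,\qquad [y_i,z_i]=a^{-1/2}q_i,\qquad [x_i,q_i]=b^{-1/2}t_i,\qquad [z_i,p_i]=-b^{-1/2}t_i,
\]
and $[x_i,z_i]=v_i$, where $|v_i|_g^2=c_0$. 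In particular $y_i^*,x_i^*,z_i^*$ are closed, $d_\mu p_i^*\propto x_i^*\wedge y_i^*$, and $d_\mu q_i^*\propto y_i^*\wedge z_i^*$.

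\textbf{Harmonicity.} Each differential $d_\mu p_i^*$ and $d_\mu q_i^*$ contains the factor $y_i^*$, so every term of $d_\mu(y_i^*\wedge p_i^*\wedge q_i^*)$ contains $y_i^*$ twice and vanishes. Hence $d_\mu H=0$.

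For coclosedness I would use \eqref{eq:CE-adjoint-pairing}. It suffices to show that no $d_\mu(e^r\wedge e^s)$ has a $y_i^*\wedge p_i^*\wedge q_i^*$ component. Such a component would require one of $e^r,e^s$ to lie in $\{y_i^*,p_i^*,q_i^*\}$ and the differential of the other to contain the remaining two. The differentials of dual one-forms are spanned by
\[
x\wedge y,\quad y\wedge z,\quad x\wedge z\ (\text{from } W),\quad x\wedge q,\quad z\wedge p,
\]
and none of these is a pair from $\{y_i,p_i,q_i\}$. So $\CEadj H=0$.

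\textbf{Ricci and $H^2$.} Compute $\Ric_\mu=M_\mu$ from \eqref{eq:moment-map-operator}. The basis is nice on the block part, so the only possible off-diagonal entries are:
\begin{itemize}
\item within $W$;
\item between $x_i,z_i$ or $x_i,y_i$ via a common partner;
\item between $W$ and block vectors.
\end{itemize}
Common-partner terms vanish because distinct brackets from a given input land in orthogonal vectors. For instance $[x_i,y_i]=a^{-1/2}p_i\perp [z_i,y_i]=-a^{-1/2}q_i$, and $[x_i,z_i]=v_i\perp[y_i,z_i]=a^{-1/2}q_i$.

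The $W$ block receives $\frac12\sum_i v_i\otimes_g v_i$, expressed in $g$-orthonormal terms. Using \eqref{eq:tight-frame} and $g|_W=c_0\langle\cdot,\cdot\rangle_0$, I expect this to equal $\frac{c_0 m}{2d}\Id_W$, a scalar. Then I read off the diagonal scalars: $x_i$ and $z_i$ lose $|v_i|^2/2=c_0/2$ plus the $a^{-1}$ and $b^{-1}$ contributions; $p_i,q_i$ gain $\frac1{2a}$ and lose $\frac1{2b}$; $t_i$ gains $\frac1b$. By \eqref{eq:tensor-square-components}, $H^2$ equals $2\cdot4(m+d)$ on $y_i,p_i,q_i$ and $0$ elsewhere.

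\textbf{Soliton derivation.} Define $D=\Ric_\mu-\frac14H^2-\lambda\Id$ with $\lambda=-(10m+9d)$, and list its eigenvalues in the table. Since $D$ is diagonal with one scalar on each of $X,Y,Z,P,Q,T,W$, it is a derivation exactly when the additive weight conditions hold:
\[
d_P=d_X+d_Y,\quad d_Q=d_Y+d_Z,\quad d_W=d_X+d_Z,\quad d_T=d_X+d_Q=d_Z+d_P.
\]
Finally, the torsion equation $\rho(D)H=2\lambda H$ reduces to $d_Y+d_P+d_Q=-2\lambda$.

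The main obstacle is the arithmetic: checking that the specific choices $a=5m+4d$, $b=4m+3d$, $c_0=2d$ satisfy these five linear constraints simultaneously. I would solve the constraints symbolically for the block scalars in terms of $a,b,c_0,m,d$, confirm the identities, and record the resulting values in Table~\ref{tab:frame-tensors}. The only genuinely non-diagonal point, the $W$ block, is handled entirely by the tight-frame identity.
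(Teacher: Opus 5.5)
Your overall route is the same as the paper's. You compute directly in the normalized frame, use the tight-frame identity for the $W$ block, and then check linear weight conditions for $D$ and for $\rho(D)H$. Several parts are correct:
\begin{itemize}
\item Your closedness and coclosedness arguments work; they are a direct check in place of the paper's appeal to Lemma~\ref{lem:harmonic-subalgebra-support}.
\item The $W$ block $\frac{c_0m}{2d}\Id_W$ is right.
\item The values of $H^2$ are right.
\item Your list of derivation and torsion constraints is right.
\end{itemize}

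However, the normalized structure constants you start from are inverted, and with them the verification fails. Since $P_i=\sqrt a\,p_i$, $Q_i=\sqrt a\,q_i$ and $T_i=\sqrt{ab}\,t_i$, the correct brackets are
\begin{itemize}
\item $[x_i,y_i]=P_i=\sqrt a\,p_i$ and $[y_i,z_i]=\sqrt a\,q_i$;
\item $[x_i,q_i]=a^{-1/2}T_i=\sqrt b\,t_i$ and $[z_i,p_i]=-\sqrt b\,t_i$.
\end{itemize}
In general, $[E_i,E_j]=E_k$ has orthonormal coefficient $\sqrt{g_k/(g_ig_j)}$. Lengthening the outputs therefore makes the coefficients larger, not $a^{-1/2}$ and $b^{-1/2}$. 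With your constants, $\Ric x_i=-\frac12(a^{-1}+b^{-1}+c_0)x_i$, while $\Ric|_W=m\Id_W$ is unchanged. The constraint $d_W=d_X+d_Z$, with $d_W=m-\lambda$, then reads $9m+7d=a^{-1}+b^{-1}$. This is impossible, because the right side is less than one. So the arithmetic step you defer to would refute Table~\ref{tab:frame-tensors} rather than confirm it.

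With the correct constants, the Ricci eigenvalues are
\begin{itemize}
\item $-\frac12(a+b+c_0)$ on $x_i,z_i$;
\item $-a$ on $y_i$;
\item $\frac12(a-b)$ on $p_i,q_i$;
\item $b$ on $t_i$;
\item $m$ on $W$.
\end{itemize}
Write $\ell=10m+9d=-\lambda$ and use $\frac14H^2=2(m+d)$ on $y_i,p_i,q_i$. Your constraints then reduce to four identities:
\begin{itemize}
\item $\ell=2a+\frac{c_0}{2}$, from $d_P=d_X+d_Y$ and $d_Q=d_Y+d_Z$;
\item $\ell=a+b+c_0+m$, from $d_W=d_X+d_Z$;
\item $2b=\ell-\frac{c_0}{2}-2(m+d)$, from $d_T=d_X+d_Q=d_Z+d_P$;
\item $\ell=b+6(m+d)$, from the torsion condition $d_Y+d_P+d_Q=-2\lambda$.
\end{itemize}
All four hold for $a=5m+4d$, $b=4m+3d$ and $c_0=2d$, which is the check the paper carries out. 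Once you fix the scaling of the brackets, your proposal goes through.
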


\begin{table}[htbp]
\centering
\renewcommand{\arraystretch}{1.25}
\begin{tabular}{cccc}
\hline
Direction & $\Ric$ & $H^2$ & $D$\\
\hline
$x_i,z_i$ & $-9(m+d)/2$ & $0$ & $(11m+9d)/2$\\
$y_i$ & $-(5m+4d)$ & $8(m+d)$ & $3(m+d)$\\
$p_i,q_i$ & $(m+d)/2$ & $8(m+d)$ & $(17m+15d)/2$\\
$t_i$ & $4m+3d$ & $0$ & $14m+12d$\\
$W$ & $m$ & $0$ & $11m+9d$\\
\hline
\end{tabular}
\caption{The operators for the tight-frame metric.}\label{tab:frame-tensors}
\end{table}

\begin{proof}
Set $x_i=X_i$, $y_i=Y_i$, $z_i=Z_i$, $p_i=P_i/\sqrt a$, $q_i=Q_i/\sqrt a$, and $t_i=T_i/\sqrt{ab}$. These vectors are orthonormal and orthogonal to $W$. Complete them to a $g$-orthonormal basis of $\nalg(v)$ by choosing a $g$-orthonormal basis of $W$. Put $\widehat v_i=v_i/\sqrt{c_0}$. Since $g|_W=c_0\langle\cdot,\cdot\rangle_0$ and $|v_i|_0=1$, we have $|\widehat v_i|_g=1$. Rescaling \eqref{eq:configuration-bracket} gives
\begin{equation}\label{eq:frame-orthonormal-brackets}
\begin{aligned}
[x_i,y_i]&=\sqrt a\,p_i,& [y_i,z_i]&=\sqrt a\,q_i,&[x_i,z_i]&=\sqrt{c_0}\,\widehat v_i,\\
[x_i,q_i]&=\sqrt b\,t_i,&[z_i,p_i]&=-\sqrt b\,t_i.
\end{aligned}
\end{equation}
Up to skew-symmetry, these are all the nonzero brackets among the chosen basis vectors.

We first verify harmonicity. For each $i$, the subspace $\operatorname{span}\{y_i,p_i,q_i\}$ is abelian. Its orthogonal complement is also a subalgebra. Indeed, a bracket can have a $p_i$- or $q_i$-component only if one input has a $y_i$-component, and no bracket has a $y_i$-component. The form $y_i^*\wedge p_i^*\wedge q_i^*$ is the volume form of this subspace, extended by zero on its orthogonal complement. Lemma~\ref{lem:harmonic-subalgebra-support} therefore shows that it is harmonic. By linearity, $H$ is harmonic.

Set $h=2\sqrt{m+d}$. The contractions of $H$ with $y_i,p_i,q_i$ are $h\,p_i^*\wedge q_i^*$, $-h\,y_i^*\wedge q_i^*$, and $h\,y_i^*\wedge p_i^*$, respectively. These two-forms are mutually orthogonal, and contractions from different blocks are also orthogonal. All remaining contractions vanish. Thus $H^2$ is diagonal. Under the full-contraction convention, each nonzero contraction has squared norm $2h^2=8(m+d)$, giving the $H^2$ column of Table~\ref{tab:frame-tensors}.

We next compute the complete Ricci operator using \eqref{eq:moment-map-operator}. In its negative term, fix the second input to be one of the chosen basis vectors. The nonzero outputs corresponding to distinct first inputs are orthogonal, as follows directly from \eqref{eq:frame-orthonormal-brackets} and the vanishing of brackets between distinct blocks. Hence the negative term is diagonal. In the positive term, each nonzero bracket has its output in one of $\R p_i$, $\R q_i$, $\R t_i$, or $W$. Therefore mixed entries vanish except possibly within $W$.

For $x_i$ and $z_i$, the sum of squared bracket lengths is $a+b+c_0$, and neither vector occurs as a bracket output. Their Ricci eigenvalue is therefore $-(a+b+c_0)/2$. The corresponding sum for $y_i$ is $2a$, giving eigenvalue $-a$. Each of $p_i,q_i$ contributes $-b/2$ as an input and $a/2$ as an output, giving eigenvalue $(a-b)/2$. Finally, $t_i$ is central and occurs as the output of two unordered input pairs. Each pair contributes $b/2$, so its Ricci eigenvalue is $b$.

Since $W$ is central, only the positive term contributes to $\Ric|_W$. The bracket $[x_i,z_i]=\sqrt{c_0}\,\widehat v_i$ contributes $(c_0/2)\widehat v_i\otimes_g\widehat v_i$. For $w\in W$, the metric rescaling gives $g(\widehat v_i,w)=\sqrt{c_0}\langle v_i,w\rangle_0$, and hence $g(\widehat v_i,w)\widehat v_i=\langle v_i,w\rangle_0v_i$. Thus $\widehat v_i\otimes_g\widehat v_i=v_i\otimes v_i$. Applying \eqref{eq:tight-frame} and $c_0=2d$, we obtain the complete operator
\begin{equation}
\begin{aligned}
\Ric x_i&=-\tfrac12(a+b+c_0)x_i,&\Ric z_i&=-\tfrac12(a+b+c_0)z_i,\\
\Ric y_i&=-ay_i,&\Ric p_i&=\tfrac12(a-b)p_i,\\
\Ric q_i&=\tfrac12(a-b)q_i,&\Ric t_i&=bt_i,\\
\Ric|_W&=\frac{c_0}{2}\sum_i\widehat v_i\otimes_g\widehat v_i
=\frac{c_0m}{2d}\Id_W=m\Id_W.
\end{aligned}
\end{equation}
Substituting $a=5m+4d$, $b=4m+3d$, and $c_0=2d$ gives the Ricci column of Table~\ref{tab:frame-tensors}. For $\lambda=-(10m+9d)$, the symmetric operator $D=\Ric-H^2/4-\lambda\Id$ has the entries in the last column. Thus the metric equation holds.

It remains to verify that $D$ is a derivation and satisfies the torsion equation. Set $u=(11m+9d)/2$ and $s=3(m+d)$. The eigenvalues of $D$ on $x_i,y_i,z_i,p_i,q_i,t_i$ are $u,s,u,u+s,u+s,2u+s$, respectively, and $D|_W=2u\Id_W$. The brackets with outputs $p_i,q_i$ have input eigenvalue sum $u+s$. The bracket with output $\widehat v_i$ has input eigenvalue sum $2u$, and those with output $t_i$ have input eigenvalue sum $u+(u+s)=2u+s$. Thus the derivation identity holds for every nonzero bracket. Since every vector in the chosen basis is an eigenvector of $D$, the identity also holds for every zero bracket. Hence $D$ is a derivation.

The eigenvalue sum on each torsion triple is $s+2(u+s)=20m+18d=-2\lambda$. By the definition of $\rho$, its action on each summand of $H$ is multiplication by the negative of this sum. Therefore $\rho(D)H=2\lambda H$. Together with harmonicity and the metric equation, this proves \eqref{eq:reduced-soliton-system}.
\end{proof}

The case $m=d=1$ gives the following seven-dimensional example. Let $\nalg$ have basis $E_1,\ldots,E_7$ and nonzero brackets
\begin{equation}\label{eq:standard-bracket}
\begin{aligned}
[E_1,E_2]&=E_4,&[E_1,E_3]&=E_5,&[E_2,E_3]&=E_6,\\
[E_1,E_6]&=E_7,&[E_3,E_4]&=-E_7.
\end{aligned}
\end{equation}
This is the Lie algebra $\mathfrak g_{3.1}(i_0)$ in \cite[Example~4, pp.~654--655]{Fe2012}, where it is shown to admit no classical nilsoliton. Proposition~\ref{prop:configuration-not-Einstein} gives an independent proof of this fact.

\begin{proposition}\label{prop:single-torsion-seed}
On \eqref{eq:standard-bracket}, the metric $g=\diag(1,1,1,9,2,9,63)$ in the fixed basis and the three-form $H=18\sqrt2\,E^{246}$ define a generalized nilsoliton with $\lambda=-19$ and $D=\diag(10,6,10,16,20,16,26)$.
\end{proposition}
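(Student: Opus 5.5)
The plan is to obtain Proposition~\ref{prop:single-torsion-seed} as the special case $m=d=1$ of Theorem~\ref{thm:tight-frame-construction}, rather than recomputing the Ricci operator. With $m=d=1$, the space $W$ is one-dimensional and the configuration is a single nonzero vector $v_1$. I will normalize $|v_1|_0=1$. The tight-frame condition \eqref{eq:tight-frame} then reads $v_1\otimes v_1=\Id_W$, which holds automatically on a line.

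The first step is to identify \eqref{eq:standard-bracket} with \eqref{eq:configuration-bracket}. I set $X_1=E_1$, $Y_1=E_2$, $Z_1=E_3$, $P_1=E_4$, $v_1=E_5$, $Q_1=E_6$ and $T_1=E_7$. Then the brackets match term by term:
\begin{itemize}
\item $[X_1,Y_1]=P_1$ is $[E_1,E_2]=E_4$;
\item $[X_1,Z_1]=v_1$ is $[E_1,E_3]=E_5$;
\item $[Y_1,Z_1]=Q_1$ is $[E_2,E_3]=E_6$;
\item $[X_1,Q_1]=T_1$ is $[E_1,E_6]=E_7$;
\item $[Z_1,P_1]=-T_1$ is $[E_3,E_4]=-E_7$.
\end{itemize}

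Next I evaluate the constants of Theorem~\ref{thm:tight-frame-construction}, which are $a=9$, $b=7$ and $c_0=2$. The metric prescribed there gives $E_1,E_2,E_3$ length one, $E_4,E_6$ squared length $a=9$, and $E_7$ squared length $ab=63$. On $W$ it is $g|_W=c_0\langle\cdot,\cdot\rangle_0$, so $g(E_5,E_5)=2$. This is exactly $g=\diag(1,1,1,9,2,9,63)$.

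For the torsion, the normalized duals are $y_1^*=E^2$, $p_1^*=3E^4$ and $q_1^*=3E^6$. Hence \eqref{eq:frame-torsion} becomes
\[
H=2\sqrt2\cdot 9\,E^{246}=18\sqrt2\,E^{246}.
\]
The soliton constant is $\lambda=-(10m+9d)=-19$.

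Finally, I read the derivation off the $D$ column of Table~\ref{tab:frame-tensors} with $m=d=1$. The entries are $10$ on $x,z$, $6$ on $y$, $16$ on $p,q$, $26$ on $t$, and $11+9=20$ on $W$. Ordering by $E_1,\dots,E_7$ gives $D=\diag(10,6,10,16,20,16,26)$.

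No step is a real obstacle; the only care needed is the bookkeeping of orthonormal rescalings in the form coefficient. I will double-check the direct verifications $D$ derivation (e.g. $10+6=16$, $10+16=26$, $10+10=20$) and $6+16+16=38=-2\lambda$ as a sanity check.
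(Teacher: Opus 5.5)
Your proposal is correct and is essentially the paper's own proof. Both specialize Theorem~\ref{thm:tight-frame-construction} to $m=d=1$ with $W=\R E_5$, $v_1=E_5$, $|E_5|_0=1$, and identify $(X_1,Y_1,Z_1,P_1,Q_1,T_1)$ with $(E_1,E_2,E_3,E_4,E_6,E_7)$. Both then read off $a=9$, $b=7$, $c_0=2$, the metric, $H=18\sqrt2\,E^{246}$ via $e^4=3E^4$ and $e^6=3E^6$, $\lambda=-19$, and $D$ from Table~\ref{tab:frame-tensors}.
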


\begin{proof}
Apply Theorem~\ref{thm:tight-frame-construction} with $m=d=1$, $W=\R E_5$, $v_1=E_5$, and $|E_5|_0=1$. Identify $(X_1,Y_1,Z_1,P_1,Q_1,T_1)$ with $(E_1,E_2,E_3,E_4,E_6,E_7)$. Then $a=9$, $b=7$, and $c_0=2$, giving the stated metric and derivation. In the orthonormal coframe, \eqref{eq:frame-torsion} is $H=2\sqrt2\,e^{246}$. Since $e^2=E^2$, $e^4=3E^4$, and $e^6=3E^6$, this equals $18\sqrt2\,E^{246}$.
\end{proof}

We next determine when the underlying Lie algebra is indecomposable. Call the spanning configuration $v_1,\ldots,v_m$ \emph{linearly indecomposable} if no direct sum $W=W_1\oplus W_2$ with both summands nonzero has every $v_i$ in one of the summands.

\begin{proposition}\label{prop:configuration-indecomposable}
The Lie algebra $\nalg(v)$ is indecomposable if and only if the configuration is linearly indecomposable. For a tight frame, this is equivalent to connectedness of the graph with vertices $1,\ldots,m$ and edges $ij$ whenever $\langle v_i,v_j\rangle_0\neq0$.
\end{proposition}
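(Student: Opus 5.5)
The easy direction is the contrapositive: a linear decomposition of the configuration gives a Lie algebra decomposition. Suppose $W=W_1\oplus W_2$ with both summands nonzero and every $v_i$ in $W_1$ or $W_2$. Split the index set into $S_1=\{i:v_i\in W_1\}$ and $S_2=\{i:v_i\in W_2\}$, both nonempty because the $v_i$ span $W$. Then
\[
\nalg(v)=\Bigl(W_1\oplus\bigoplus_{i\in S_1}B_i\Bigr)\oplus\Bigl(W_2\oplus\bigoplus_{i\in S_2}B_i\Bigr),
\]
where $B_i=\operatorname{span}\{X_i,Y_i,Z_i,P_i,Q_i,T_i\}$. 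By \eqref{eq:configuration-bracket} both summands are ideals, they commute, and both are nonzero. So $\nalg(v)$ is decomposable.

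For the converse I would use the centroid. Suppose $\nalg=\nalg(v)=\mathfrak a\oplus\mathfrak b$ is a nontrivial ideal decomposition, and let $\pi$ be the projection onto $\mathfrak a$. Then $\pi$ lies in the centroid, so $\pi[x,y]=[\pi x,y]=[x,\pi y]$. The plan is to first determine the centroid $\Gamma$ of $\nalg$ completely and then read off $\pi$.
\begin{enumerate}
\item From $[x,\nalg]=0$ for central $x$ one gets $[\gamma x,\nalg]=0$, so every $\gamma\in\Gamma$ preserves $Z(\nalg)=W\oplus V_3$. By the identity $\gamma[x,y]=[\gamma x,y]$, it also preserves $\nalg^2$ and $\nalg^3=V_3$.
\item Apply $\gamma$ to the relations of block $i$. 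From $\gamma P_i=[\gamma X_i,Y_i]=[X_i,\gamma Y_i]$, compare components to see that $\gamma X_i$ has no $Y_i$-, $Z_i$-, or other-block $V_1$ components beyond a scalar multiple $t_iX_i$ plus central and $V_2$ terms. The same holds for $\gamma Y_i$ and $\gamma Z_i$, with the same scalar $t_i$: equating $[\gamma X_i,Y_i]$ with $[X_i,\gamma Y_i]$ forces equal coefficients.
\item Consequently $\gamma P_i=t_iP_i$, $\gamma Q_i=t_iQ_i$, $\gamma T_i=t_iT_i$, and $\gamma v_i=\gamma[X_i,Z_i]=t_iv_i$.
\end{enumerate}
Extra $V_2$ or central terms in $\gamma X_i$ would contribute brackets (for example $[P_i\text{-part},Z_i]$) that must vanish or match. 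I expect this bookkeeping to be the main obstacle: one must rule out contributions such as a $Q_i$-component in $\gamma X_i$ pairing with $X_i$ to produce $T_i$. I would handle it by evaluating $\gamma[X_i,Q_i]=[\gamma X_i,Q_i]=[X_i,\gamma Q_i]$ and using that $\gamma Q_i\in\nalg^2$.

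Since $\pi$ is an idempotent in $\Gamma$, each $t_i\in\{0,1\}$ and $\pi v_i=t_iv_i$. Also $\pi$ preserves $W$: indeed $W=\operatorname{span}\{v_i\}$ and each $v_i$ is an eigenvector, so $\pi(W)\subset W$. Setting $W_1=\pi W$ and $W_2=(\Id-\pi)W$ gives $W=W_1\oplus W_2$ with each $v_i$ in one summand. If, say, $W_2=0$, then every $t_i=1$, so $\pi$ is the identity on all blocks and on $W$, hence $\mathfrak b=0$, a contradiction. Thus both summands are nonzero, and the configuration is linearly decomposable.

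For tight frames, the graph statement reduces to an orthogonality argument. If the graph is disconnected with vertex parts $S_1$ and $S_2$, the spans of $\{v_i\}_{i\in S_1}$ and $\{v_i\}_{i\in S_2}$ are orthogonal, hence in direct sum, and together they span $W$; this gives a linear decomposition. Conversely, suppose $W=W_1\oplus W_2$ with every $v_i$ in some $W_k$. Apply \eqref{eq:tight-frame} to $w\in W_1$: this gives $(m/d)w=\sum_{v_i\in W_1}\langle v_i,w\rangle_0v_i+\sum_{v_i\in W_2}\langle v_i,w\rangle_0v_i$. The first sum lies in $W_1$ and the second in $W_2$, and the left side lies in $W_1$, so the $W_2$-sum vanishes. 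Pairing $\sum_{v_i\in W_2}\langle v_i,w\rangle_0v_i=0$ with $w$ gives $\sum_{v_i\in W_2}\langle v_i,w\rangle_0^2=0$, so every $v_i\in W_2$ is orthogonal to $W_1$. Hence no edge joins the two index classes, and the graph is disconnected.
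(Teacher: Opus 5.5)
Your easy direction and your tight-frame argument are correct. Pairing $\sum_{v_i\in W_2}\langle v_i,w\rangle_0v_i=0$ with $w$ is a clean substitute for the paper's self-adjoint projection. The converse follows the paper's centroid strategy, but two steps fail as written.

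First, step (3) is false for general centroid elements. The bookkeeping you plan cannot succeed, because a $P_i$-component of $\gamma X_i$ genuinely occurs. For $m=d=1$, take $\gamma X_1=P_1$, $\gamma Z_1=Q_1$, $\gamma v_1=T_1$, and $\gamma=0$ on $Y_1,P_1,Q_1,T_1$. This map satisfies $\gamma[x,y]=[\gamma x,y]$ on every pair of basis vectors; for instance $[\gamma X_1,Z_1]=[P_1,Z_1]=T_1=\gamma v_1=[X_1,\gamma Z_1]$. So it lies in the centroid with $t_1=0$, yet $\gamma v_1=T_1\neq t_1v_1$ and $\gamma W\not\subset W$. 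In general one only has $\gamma v_i\equiv t_iv_i$ modulo $\nalg^3$, so your derivations of $\pi v_i=t_iv_i$ and of $\pi W\subset W$ are invalid.

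The paper avoids this by passing to the idempotent map induced on $Z(\nalg)/\nalg^3\cong W$. Alternatively, use idempotency directly. You have $\pi v_i=[\pi X_i,Z_i]=t_iv_i+c_iT_i$ with $\pi T_i=t_iT_i$ and $t_i^2=t_i$. Then $\pi^2=\pi$ forces $c_i(1-2t_i)=0$, hence $c_i=0$.

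Second, when $W_2=0$ you conclude that $\pi$ is the identity on all blocks. But step (2) determines $\pi X_i,\pi Y_i,\pi Z_i$ only modulo $\nalg^2$. The centroid really does contain nonzero maps $\nalg\to Z(\nalg)$ vanishing on $\nalg^2$, so more is needed. The paper's argument is that $N=\pi-\Id$ lies in the centroid with $N\nalg^r\subset\nalg^{r+1}$, so $N$ is nilpotent. It is also diagonalizable because $\pi$ is, hence $N=0$. Equally, once $\pi$ is the identity on $\nalg^2$, the nonzero ideal $\ker\pi$ would have to meet $Z(\nalg)=W\oplus V_3\subset\nalg^2$, which is impossible.

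Finally, the single relation $\gamma P_i=[\gamma X_i,Y_i]=[X_i,\gamma Y_i]$ cannot detect a $Y_i$-component or other-block components of $\gamma X_i$. Step (2) also needs the vanishing brackets $[\gamma X_i,X_i]=0$ and $[\gamma X_i,y]=0$ for $y$ in other blocks. This is exactly what the paper's injectivity of $\Lambda^2L_j\to\nalg^2/\nalg^3$ encodes.
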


\begin{proof}
Set $\nalg=\nalg(v)$. Suppose first that the configuration splits over $W=W_1\oplus W_2$, with both summands nonzero. For $r=1,2$, let $\nalg_r$ be the sum of $W_r$ and the six-dimensional blocks whose vectors $v_i$ belong to $W_r$. Each block is assigned to exactly one summand because $v_i\neq0$. By \eqref{eq:configuration-bracket}, $[\nalg_r,\nalg_r]\subset\nalg_r$ and $[\nalg_1,\nalg_2]=0$. Thus $\nalg=\nalg_1\oplus\nalg_2$ is a direct sum of nonzero ideals.

For the converse, suppose that the configuration is linearly indecomposable. Recall that the centroid of $\nalg$ consists of endomorphisms $C$ satisfying $C[x,y]=[Cx,y]=[x,Cy]$ for all $x,y\in\nalg$. A projection onto an ideal direct summand belongs to the centroid and satisfies $C^2=C$. Conversely, for an idempotent $C$ in the centroid, $\nalg=\im C\oplus\ker C$. The centroid identities show that both subspaces are ideals. Moreover, if $u=Cx$ and $v\in\ker C$, then $[u,v]=[Cx,v]=[x,Cv]=0$. Hence it suffices to prove that every idempotent in the centroid is either zero or the identity.

Let $C$ be such an idempotent. The centroid identities imply that $C$ preserves the lower central series. They also give $[Cz,x]=C[z,x]=0$ for $z\in Z(\nalg)$ and $x\in\nalg$, so $C$ preserves the center. Identify $\nalg/\nalg^2$ with $\bigoplus_i L_i$, where $L_i=\operatorname{span}\{X_i,Y_i,Z_i\}$, and let $\overline C$ be the induced map.

For each $j$, the bracket map $\Lambda^2L_j\to\nalg^2/\nalg^3$ is injective. Indeed, the images of $X_j\wedge Y_j$, $Y_j\wedge Z_j$, and $X_j\wedge Z_j$ are the classes of $P_j,Q_j,v_j$, respectively. These classes are linearly independent because $v_j\neq0$ and $\nalg^2/\nalg^3$ is the direct sum of $W$ and the $P_i,Q_i$ directions.

We now show that $\overline C$ preserves each $L_i$. Fix $x\in L_i$ and $j\neq i$, and let $z_j$ be the $L_j$-component of $\overline Cx$. For every $y\in L_j$, the blocks commute, so $[Cx,y]=C[x,y]=0$. Components of $Cx$ in $\nalg^2$ contribute only to $\nalg^3$, since $[\nalg^2,\nalg]\subset\nalg^3$. Components in the other $L_k$ commute with $y$. Thus $[z_j,y]\in\nalg^3$ for every $y\in L_j$. Injectivity of the bracket map implies $z_j\wedge y=0$ for every $y\in L_j$. Since $\dim L_j=3$, this forces $z_j=0$. Therefore $\overline C(L_i)\subset L_i$.

For $x\in L_i$, we also have $[Cx,x]=C[x,x]=0$. Passing to $\nalg^2/\nalg^3$ and using the same injectivity gives $\overline Cx\wedge x=0$. Hence $\overline Cx$ is a multiple of $x$ for every nonzero $x\in L_i$. Applying this to $X_i,Y_i,Z_i$ and their pairwise sums shows that $\overline C|_{L_i}=a_i\Id_{L_i}$ for some scalar $a_i$. Since $\overline C^2=\overline C$, we have $a_i\in\{0,1\}$.

Let $P$ be the map induced by $C$ on $Z(\nalg)/\nalg^3$, which we identify with $W$. From $v_i=[X_i,Z_i]$ and the centroid identity, we obtain $Cv_i=[CX_i,Z_i]$. Since $CX_i-a_iX_i\in\nalg^2$, it follows that $Cv_i-a_iv_i\in\nalg^3$. Thus $Pv_i=a_iv_i$. Also, $P^2=P$, so $W=\ker P\oplus\im P$. If both values zero and one occurred among the $a_i$, then both summands would be nonzero because the corresponding $v_i$ are nonzero. Every $v_i$ would belong to one of these summands, contradicting linear indecomposability. Therefore all $a_i$ have the same value $a\in\{0,1\}$.

Set $N=C-a\Id$. Then $N$ belongs to the centroid and $N\nalg\subset\nalg^2$. The identity $N[\nalg,\nalg^r]=[\nalg,N\nalg^r]$ gives inductively $N\nalg^r\subset\nalg^{r+1}$. Consequently, $N^3\nalg\subset\nalg^4=0$. Since $C^2=C$, the operator $C$ is diagonalizable, and so is $N=C-a\Id$. A diagonalizable nilpotent endomorphism is zero. Hence $C=a\Id$, which proves that $\nalg$ is indecomposable.

Finally, suppose \eqref{eq:tight-frame} holds. If the configuration splits over $W=W_1\oplus W_2$, let $\Pi$ be the projection onto $W_1$ along $W_2$. Since $\Pi v_i=v_i$ for $v_i\in W_1$ and $\Pi v_i=0$ for $v_i\in W_2$, multiplying \eqref{eq:tight-frame} on the left by $\Pi$ gives $\Pi=(d/m)\sum_{v_i\in W_1}v_i\otimes v_i$. Each operator $v_i\otimes v_i$ is self-adjoint with respect to $\langle\cdot,\cdot\rangle_0$, so $\Pi$ is self-adjoint. Its image and kernel are therefore orthogonal, giving $W_1\perp W_2$. Since the vectors $v_i$ span $W$, both summands contain configuration vectors. No edge joins the two groups of vertices, so the graph is disconnected.

Conversely, suppose the graph is disconnected. Let $W_1$ be the span of the vectors belonging to one connected component, and let $W_2$ be the span of all remaining vectors. Vectors from distinct components have zero inner product, so $W_1\perp W_2$. Both subspaces are nonzero, and the spanning assumption gives $W=W_1\oplus W_2$. This is a decomposition of the configuration, proving the graph criterion.
\end{proof}

The following explicit frames give indecomposable examples in a range of dimensions.

\begin{lemma}\label{lem:Fourier-tight-frames}
For every pair of integers $m>d\geq1$, there is a real unit-norm tight frame of $m$ vectors in $\R^d$ whose graph in Proposition~\ref{prop:configuration-indecomposable} is connected.
\end{lemma}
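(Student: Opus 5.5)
I would construct the frame explicitly, using real harmonic (Fourier) frames, in three cases. Throughout I use one fact. If a collection of unit vectors $u_1,\ldots,u_m$ in $\R^d$ has frame operator $\sum_j u_j\otimes u_j=\frac md\Id$, then it is a unit-norm tight frame. Connectedness of the graph also passes to any superset of edges.

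\emph{Case $d$ even, $d=2k$.} Set $\theta_j=2\pi j/m$ for $j=0,\ldots,m-1$ and choose distinct frequencies $1\le \ell_1<\cdots<\ell_k<m/2$. Define
\begin{equation*}
v_j=\frac1{\sqrt k}\bigl(\cos\ell_1\theta_j,\sin\ell_1\theta_j,\ldots,\cos\ell_k\theta_j,\sin\ell_k\theta_j\bigr).
\end{equation*}
Each $v_j$ has unit norm. Tightness follows from the orthogonality relations
\begin{equation*}
\sum_j\cos(\ell\theta_j)\cos(\ell'\theta_j)=\tfrac m2\delta_{\ell\ell'},
\end{equation*}
together with the analogous relations for sine–sine and cosine–sine. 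These hold whenever the sums $\ell+\ell'$ and differences $\ell-\ell'$ are not congruent to $0$ modulo $m$, and the range $1\le\ell<m/2$ guarantees this. That range contains enough frequencies only when $k\le\lfloor (m-1)/2\rfloor$, that is, when $d\le m-1$ for odd $m$ or $d\le m-2$ for even $m$.

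\emph{Case $d$ odd, $d=2k+1$.} Prepend the constant coordinate $1/\sqrt d$ and rescale the trigonometric block by $\sqrt{2k/d}$ instead of $1/\sqrt k$. Then the frame operator becomes $\frac md\Id$, again by the orthogonality relations, provided $k<m/2$.

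\emph{Connectedness.} The inner product is
\begin{equation*}
\langle v_i,v_j\rangle=c_0+c_1\sum_r\cos\bigl(\ell_r(\theta_i-\theta_j)\bigr)
\end{equation*}
for suitable constants $c_0,c_1$. I will choose $\ell_1=1$ and prove connectedness through consecutive vertices, showing $\langle v_j,v_{j+1}\rangle\neq0$. When that fails, an alternative is to perturb the frequency choice slightly, or to use the fact that the inner product is a nonzero trigonometric polynomial in $\theta_i-\theta_j$. Such a polynomial cannot vanish at every nonzero multiple of $2\pi/m$, because the full sum of the Gram entries in a row equals $\langle v_i,\sum_j v_j\rangle$, and that is controlled. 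A cleaner route is the following observation. A tight frame whose graph is disconnected splits $\R^d$ orthogonally, as shown in Proposition~\ref{prop:configuration-indecomposable}. The frame is invariant under the cyclic shift, which acts orthogonally, so the components are permuted transitively. Each component then spans a subspace invariant under the stabilizer subgroup. Since the harmonic frame vectors are distinct and span $\R^d$, I would deduce a contradiction by showing that the frame vectors indexed by a proper subgroup coset cannot span an orthogonal summand. This uses the fact that frequency $1$ separates cosets.

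\emph{Edge case $d=m-1$ with $m$ even.} Here the frequency count fails, and this case needs a separate construction. I would take the $m$ vertices of a regular simplex centered at the origin in $\R^{m-1}$. These are unit vectors forming a tight frame with pairwise inner products $-1/(m-1)\neq0$, so the graph is complete. This construction works for every $m=d+1$, which makes the parity issue disappear for $d=m-1$. The remaining cases $d\le m-2$ are covered by the harmonic frames above.

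I expect the connectedness verification for the harmonic frames to be the main obstacle, and I would try the orthogonal-splitting and cyclic-symmetry argument first.
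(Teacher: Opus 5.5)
Your tightness constructions are essentially right, but the connectedness step has a genuine gap. In one family of cases, the statement you are trying to prove about your frame is false.

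A minor point on the odd case first. The raw trigonometric entries must carry the factor $\sqrt{2/d}$, so that the block has norm $\sqrt{2k/d}$. Literally replacing the prefactor $1/\sqrt k$ by $\sqrt{2k/d}$ would be off by a factor $\sqrt k$.

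The real problem is the even case with $m=d+2$, where $d=2k$. The constraint $1\le\ell_1<\cdots<\ell_k<m/2=k+1$ forces the frequencies to be exactly $1,\dots,k$. For $s=i-j$ you then get
\[
k\langle v_i,v_j\rangle=\sum_{\ell=1}^k\cos\frac{2\pi\ell s}{m}=\frac{\sin(k\pi s/m)\cos(\pi s/2)}{\sin(\pi s/m)},
\]
which vanishes for every odd $s$. So even and odd indices lie in different components. Already for $d=2$, $m=4$ your frame is $\{\pm e_1,\pm e_2\}$.

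None of your fallbacks can rescue this.
\begin{itemize}
\item There is no freedom to perturb the frequencies.
\item Nonvanishing of the Gram entry at some nonzero multiple of $2\pi/m$ does not give connectivity. Here the nonzero entries occur exactly at even $s$, which generate a proper subgroup.
\item The claim that ``frequency $1$ separates cosets'' is false, since the two parity cosets span orthogonal summands.
\end{itemize}

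Your simplex patch addresses $m=d+1$, which is not actually a problem case. For $m$ even, $d=m-1$ is odd, and your odd construction with $k=m/2-1<m/2$ already works. The patch does not reach $m=d+2$. Even where your frames are connected, the proposal never carries out the check.

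The missing idea is the one the paper uses: in the even case, take half-integer frequencies,
\[
v_j=\sqrt{2/d}\,\bigl(\cos((2\ell-1)\pi j/m),\sin((2\ell-1)\pi j/m)\bigr)_{\ell=1}^k .
\]
Products of columns involve the integer frequencies $\ell-r$ and $\ell+r-1$, whose absolute values are at most $d-1<m$. So tightness holds for every even $d<m$.

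For connectedness, write $d\langle v_j,v_{j+1}\rangle=2\sum_{\ell=1}^k\cos((2\ell-1)\pi/m)$. Multiplying by $\sin(\pi/m)$ makes the sum telescope to $\sin(d\pi/m)$. In the odd case, commit to the consecutive frequencies $1,\dots,k$. The same telescoping gives $d\langle v_j,v_{j+1}\rangle=1+2\sum_{\ell=1}^k\cos(2\ell\pi/m)=\sin(d\pi/m)/\sin(\pi/m)$.

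Both values are positive because $0<\pi/m<d\pi/m<\pi$. So consecutive vertices are always adjacent, and no symmetry argument or case split is needed. If you keep integer frequencies in the even case, the consecutive entry equals $\sin(k\pi/m)\cos((k+1)\pi/m)/(k\sin(\pi/m))\neq0$ for $m>d+2$. So only $m=d+2$ genuinely requires the half-integer shift or another construction.
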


\begin{proof}
Index the vectors by $j=0,\ldots,m-1$. For $d=2k+1$, set
\begin{equation}\label{eq:odd-Fourier-frame}
v_j=\frac1{\sqrt d}\left(1,\left(\sqrt2\cos\frac{2\ell\pi j}{m},\sqrt2\sin\frac{2\ell\pi j}{m}\right)_{\ell=1}^{k}\right).
\end{equation}
For $d=2k$, set
\begin{equation}\label{eq:even-Fourier-frame}
v_j=\sqrt{\frac2d}\left(\left(\cos\frac{(2\ell-1)\pi j}{m},\sin\frac{(2\ell-1)\pi j}{m}\right)_{\ell=1}^{k}\right).
\end{equation}
The identity $\cos^2 t+\sin^2 t=1$ gives $|v_j|_0^2=(1+2k)/d=1$ in the odd case and $|v_j|_0^2=2k/d=1$ in the even case. When $d=1$, the first formula has no trigonometric coordinates and gives $v_j=1$ for every $j$.

We next verify the tight-frame identity. Let $A$ be the $m\times d$ matrix whose row indexed by $j$ is $v_j^{\mathsf T}$, and set $\theta_j=2\pi j/m$. For every integer $q$ with $0<|q|<m$, the geometric series gives $\sum_{j=0}^{m-1}e^{\mathrm i q\theta_j}=(1-e^{2\pi\mathrm i q})/(1-e^{2\pi\mathrm i q/m})=0$. Thus the corresponding sums of sines and cosines vanish.

In the odd case, products of trigonometric columns involve the integer frequencies $\ell-r$ and $\ell+r$, where $1\leq\ell,r\leq k$. Their nonzero absolute values are at most $2k=d-1<m$. In the even case, the original frequencies are $\ell-1/2$, but products involve the integer frequencies $\ell-r$ and $\ell+r-1$. Their nonzero absolute values are at most $2k-1=d-1<m$. Therefore the same finite-sum identity applies to these products in both cases.

By the product-to-sum identities, the square of an unscaled sine or cosine column has constant term $1/2$, while the product of two distinct such columns has no nonzero constant term. In particular, the product of a sine and a cosine of the same frequency has no constant term. Hence the unscaled trigonometric columns are mutually orthogonal and have squared norm $m/2$. In the odd case, they are also orthogonal to the constant column, since their individual frequencies are nonzero integers less than $m$. The unscaled constant column has squared norm $m$. After including the factors in \eqref{eq:odd-Fourier-frame} and \eqref{eq:even-Fourier-frame}, every column of $A$ has squared norm $m/d$. Thus $A^{\mathsf T}A=(m/d)\Id_{\R^d}$, or equivalently $\sum_{j=0}^{m-1}v_j\otimes v_j=(m/d)\Id_{\R^d}$. Together with $|v_j|_0=1$, this proves \eqref{eq:tight-frame}. In particular, the vectors span $\R^d$.

It remains to prove that the graph is connected. Set $t=\pi/m$ and let $0\leq j<m-1$. Using $\cos\alpha\cos\beta+\sin\alpha\sin\beta=\cos(\alpha-\beta)$, we obtain $d\langle v_j,v_{j+1}\rangle_0=1+2\sum_{\ell=1}^k\cos(2\ell t)$ in the odd case and $d\langle v_j,v_{j+1}\rangle_0=2\sum_{\ell=1}^k\cos((2\ell-1)t)$ in the even case. Multiplying either expression by $\sin t$ and applying $2\sin t\cos\alpha=\sin(\alpha+t)-\sin(\alpha-t)$ makes the sums telescope. The result is $\sin(dt)$ in both cases. Therefore
\begin{equation}\label{eq:Fourier-adjacent-product}
\langle v_j,v_{j+1}\rangle_0=\frac{\sin(d\pi/m)}{d\sin(\pi/m)}>0\qquad(0\leq j<m-1).
\end{equation}
Since $m>d\geq1$, both $\pi/m$ and $d\pi/m$ lie in $(0,\pi)$, which gives the strict inequality. By \eqref{eq:Fourier-adjacent-product}, every pair of consecutive vertices is joined by an edge. Thus the graph contains the path through $0,1,\ldots,m-1$ and is connected.
\end{proof}

\begin{corollary}[Theorem~\ref{thmC}]
For every pair of integers $m>d\geq1$, there is an indecomposable three-step real nilpotent Lie algebra of dimension $6m+d$ which admits a generalized nilsoliton with nonzero harmonic torsion and admits no classical nilsoliton. Such examples also exist in dimension seven and in every dimension $n\geq43$. The smallest dimension of a real nilpotent Lie algebra admitting a generalized nilsoliton but no classical nilsoliton is seven.
\end{corollary}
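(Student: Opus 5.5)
The plan is to assemble the corollary from results already established in this section. For $m>d\geq1$, Lemma~\ref{lem:Fourier-tight-frames} supplies a unit-norm tight frame $v_1,\ldots,v_m$ in $W=\R^d$ whose graph is connected. Since a tight frame spans $W$, Proposition~\ref{prop:configuration-indecomposable} shows that $\nalg(v)$ is indecomposable. By construction $\nalg(v)$ is three-step nilpotent of dimension $6m+d$. Proposition~\ref{prop:configuration-not-Einstein} rules out classical nilsolitons. Theorem~\ref{thm:tight-frame-construction} gives a generalized nilsoliton whose torsion \eqref{eq:frame-torsion} is nonzero and harmonic. Together these prove the first assertion.

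For dimension seven, I would use Proposition~\ref{prop:single-torsion-seed}. It applies to the algebra \eqref{eq:standard-bracket}, which is $\nalg(v)$ with $m=d=1$. This algebra is indecomposable because a single nonzero vector is trivially linearly indecomposable, and it admits no classical nilsoliton by Proposition~\ref{prop:configuration-not-Einstein}.

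For every $n\geq43$, it suffices to represent $n=6m+d$ with $m>d\geq1$. Write $n=6m+d$ with $d\in\{1,\ldots,6\}$, so that $m=(n-d)/6$. When $n\geq43$ and $d\leq6$, we get $m\geq(43-6)/6>6\geq d$. The residue $d=6$ is used when $6\mid n$, and $m>d$ still holds. Hence every $n\geq43$ is covered.

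For minimality, the inputs are the following.
\begin{enumerate}
\item The classification of real nilpotent Lie algebras of dimension at most six, together with the known fact (Will, and Lauret's survey) that every nilpotent Lie algebra of dimension at most six is an Einstein nilradical, that is, admits a classical nilsoliton. A classical nilsoliton is a generalized nilsoliton with $H=0$.
\item Consequently no algebra of dimension at most six can admit a generalized nilsoliton while admitting no classical one.
\item The abelian case is trivial, since the flat metric is a nilsoliton.
\end{enumerate}
Combined with the seven-dimensional example, this shows that the minimum is exactly seven.

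The step I expect to be the main obstacle is this minimality claim, because it rests on an external classification result not proved in the paper. I would cite the statement that all nilpotent Lie algebras of dimension at most six are Einstein nilradicals, for instance \cite{Wi2003} together with the classical low-dimensional results quoted in the introduction. The rest is bookkeeping.
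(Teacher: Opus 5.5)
Your proposal is correct and follows essentially the same route as the paper. The first assertion comes from combining Lemma~\ref{lem:Fourier-tight-frames}, Proposition~\ref{prop:configuration-indecomposable}, Theorem~\ref{thm:tight-frame-construction} and Proposition~\ref{prop:configuration-not-Einstein}; dimension seven comes from Proposition~\ref{prop:single-torsion-seed} with the linearly indecomposable $m=d=1$ configuration; your residue choice $d\in\{1,\ldots,6\}$ for $n\geq43$ is the paper's $m=\lfloor(n-1)/6\rfloor$; and minimality uses the low-dimensional results \cite{La2002,Wi2003} together with the flat metric for the abelian case.
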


\begin{proof}
Combine Lemma~\ref{lem:Fourier-tight-frames}, Proposition~\ref{prop:configuration-indecomposable}, Theorem~\ref{thm:tight-frame-construction}, and Proposition~\ref{prop:configuration-not-Einstein}. For $n\geq43$, set $m=\lfloor(n-1)/6\rfloor\geq7$ and $d=n-6m\in\{1,\ldots,6\}$. Then $m>d$ and $n=6m+d$.

For $m=d=1$, the configuration is linearly indecomposable, so Proposition~\ref{prop:single-torsion-seed} gives the seven-dimensional example. Every real nilpotent Lie algebra of dimension at most five admits a classical nilsoliton by \cite{La2002}, and the six-dimensional case is proved in \cite{Wi2003}. The abelian case is included by the flat metric. Hence seven is the smallest possible dimension.
\end{proof}

\section{Intrinsic obstructions to generalized nilsolitons}\label{sec:intrinsic-obstructions}

In this section, we obtain two obstructions to the existence of generalized nilsolitons, using the abelian derived ideal and the quotient by the center. Both apply to every metric and closed three-form, including $H=0$. We also give a condition on third Chevalley--Eilenberg cohomology for nonzero torsion, which is used in Section~\ref{sec:filiform-threshold}.

We first compute the partial trace of the Ricci operator over an abelian derived ideal.

\begin{lemma}\label{lem:metabelian-partial-Ricci}
Let $\nalg$ be a nonabelian metric nilpotent Lie algebra with abelian derived ideal $W=[\nalg,\nalg]$. Let $P_W$ be the orthogonal projection onto $W$, and let $u_1,\ldots,u_s$ be an orthonormal basis of $W^\perp$. Then
\begin{equation}\label{eq:metabelian-partial-Ricci}
\tr(P_W\Ric)=\frac14\sum_{i,j}|[u_i,u_j]|^2>0.
\end{equation}
\end{lemma}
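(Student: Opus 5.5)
We compute $\tr(P_W\Ric)=\sum_a\bar g(M_\mu w_a,w_a)$ directly from the moment-map formula \eqref{eq:moment-map-operator}, with $\Ric_\mu=M_\mu$ by nilpotency. Here $w_1,\ldots,w_r$ is an orthonormal basis of $W$, and we use the full orthonormal basis $\{w_a\}\cup\{u_i\}$ of $\nalg$.

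For the negative term, note that $\mu(w_a,\cdot)$ vanishes on $W$ because $W$ is abelian. Hence
\[
-\tfrac12\sum_{a}\sum_{e}|[w_a,e]|^2=-\tfrac12\sum_{a,i}|[w_a,u_i]|^2 .
\]
For the positive term, every bracket lies in $W$, so summing $\bar g(\mu(e,f),w_a)^2$ over $a$ gives $|[e,f]|^2$. Splitting the ordered pairs $(e,f)$ by type, pairs inside $W$ contribute nothing. Mixed pairs contribute $2\sum_{a,i}|[w_a,u_i]|^2$, since both orders occur. Pairs inside $W^\perp$ contribute $\sum_{i,j}|[u_i,u_j]|^2$. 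With the factor $\frac14$, the positive term equals
\[
\tfrac12\sum_{a,i}|[w_a,u_i]|^2+\tfrac14\sum_{i,j}|[u_i,u_j]|^2 .
\]
The mixed contributions from the two terms cancel exactly, which yields the displayed formula \eqref{eq:metabelian-partial-Ricci}.

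For strict positivity, it remains to show that some $[u_i,u_j]\neq0$. Suppose all these brackets vanish. Then $W^\perp$ is abelian and $W$ is abelian. Consequently $[\nalg,\nalg]=[W^\perp,W]\subset[\nalg,W]=\nalg^3$, so $\nalg^2=\nalg^3$. Nilpotency then forces $\nalg^2=0$, contradicting that $\nalg$ is nonabelian.

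The main point requiring care is the bookkeeping of ordered pairs in the positive term, since the factor $2$ is needed to cancel the negative term. Beyond that, the argument is routine.
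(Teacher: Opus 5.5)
Your proposal is correct and follows essentially the same route as the paper. You compute $\tr(P_W\Ric)$ from the moment-map formula in an adapted orthonormal basis, where the mixed bracket terms cancel. For strict positivity you argue that $[W^\perp,W^\perp]=0$ would force $[\nalg,\nalg]=[\nalg,W]$, which nilpotency rules out, exactly as in the paper.
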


\begin{proof}
Complete the $u_i$ by an orthonormal basis $w_1,\ldots,w_r$ of $W$. Since $\Ric=M_\mu$, all brackets take values in $W$, and $[W,W]=0$, formula \eqref{eq:moment-map-operator} gives
\begin{equation}\label{eq:metabelian-trace-cancellation}
\tr(P_W\Ric)=-\frac12\sum_{a,i}|[w_a,u_i]|^2+\frac14\sum_{i,j}|[u_i,u_j]|^2\\+\frac12\sum_{a,i}|[u_i,w_a]|^2.
\end{equation}
The first and last sums in \eqref{eq:metabelian-trace-cancellation} cancel, proving the equality in \eqref{eq:metabelian-partial-Ricci}.

If the remaining sum were zero, then $[W^\perp,W^\perp]=0$. Together with $[W,W]=0$, this would give $W=[\nalg,\nalg]=[W^\perp,W]=[\nalg,W]$. Every term of the lower central series from the second onward would therefore equal $W$. Nilpotency forces $W=0$, contradicting nonabelianity. Hence the partial trace is strictly positive.
\end{proof}

Combining this partial trace with the soliton trace identity gives a restriction on the eigenvalues of the soliton derivation.

\begin{proposition}[Theorem~\ref{thmD}]\label{prop:metabelian-spectral-obstruction}
Let $\nalg$ be a nonabelian nilpotent Lie algebra whose derived ideal $W$ is abelian of dimension $r$. If $(g,H)$ is a generalized nilsoliton with $c=-\lambda>0$ and symmetric derivation $D$, then
\begin{equation}\label{eq:metabelian-spectral-obstruction}
-3\tr(D^2)+c\bigl(3\tr D+2\tr(D|_W)\bigr)-2rc^2>0.
\end{equation}
In particular, $L=3\tr D+2\tr(D|_W)>0$ and $L^2>24r\tr(D^2)$.
\end{proposition}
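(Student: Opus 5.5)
The plan is to pair the metric soliton equation with the orthogonal projection $P_W$ and use Lemma~\ref{lem:metabelian-partial-Ricci} for the sign. The trace identity of Proposition~\ref{prop:nilsoliton-trace} then eliminates the torsion term.

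First, I will check that $\tr(P_WD)=\tr(D|_W)$. Since $D$ is a derivation, it preserves $W=[\nalg,\nalg]$. Since $D$ is $\bar g$-symmetric, it also preserves $W^\perp$. Hence $D$ is block diagonal with respect to $W\oplus W^\perp$, and the identity follows.

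Next, I multiply the metric equation $\Ric_\mu-\frac14H^2=-c\Id+D$ by $P_W$ and take traces. This gives
\begin{equation*}
\tr(P_W\Ric_\mu)=-rc+\tr(D|_W)+\tfrac14\tr(P_WH^2).
\end{equation*}
By Lemma~\ref{lem:metabelian-partial-Ricci}, the left-hand side is strictly positive because $\nalg$ is nonabelian with abelian derived ideal. Multiplying by $2c>0$ yields
\begin{equation*}
0<2c\tr(D|_W)-2rc^2+\tfrac c2\tr(P_WH^2).
\end{equation*}

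The torsion term is the only point that needs an estimate, and it is elementary. Since $H^2\geq0$ and $0\leq P_W\leq\Id$, we have $\tr(P_WH^2)=\sum_a|\iota_{w_a}H|^2\leq\tr(H^2)=|H|^2$, where $\{w_a\}$ is an orthonormal basis of $W$. Proposition~\ref{prop:nilsoliton-trace} states $\tr(D^2)=c(\tr D-\frac16|H|^2)$, which rearranges to $\frac c2|H|^2=3c\tr D-3\tr(D^2)$. Substituting into the previous display gives exactly
\begin{equation*}
-3\tr(D^2)+c\bigl(3\tr D+2\tr(D|_W)\bigr)-2rc^2>0.
\end{equation*}

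For the consequences, view the left-hand side as the quadratic $q(c)=-2rc^2+Lc-3\tr(D^2)$ in $c$. Here $r\geq1$, because $\nalg$ is nonabelian. Since $-2rc^2-3\tr(D^2)\leq0$, the inequality $q(c)>0$ forces $Lc>0$, and therefore $L>0$. Moreover, a downward-opening quadratic takes a positive value only if its discriminant is positive. This gives $L^2-24r\tr(D^2)>0$.

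I do not expect a genuine obstacle. The only steps requiring care are the $D$-invariance of $W$ and $W^\perp$, and the bound $\tr(P_WH^2)\leq|H|^2$ in the full-contraction normalization.
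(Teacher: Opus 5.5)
Your proposal is correct and is essentially the paper's proof. The paper combines the same ingredients (the $P_W$-trace of the metric equation, Lemma~\ref{lem:metabelian-partial-Ricci}, and the trace identity \eqref{eq:nilsoliton-trace}) into the exact identity $F(c)=2c\tr(P_W\Ric)+\frac c2\tr((\Id-P_W)H^2)$, which is your inequality chain with the discarded nonnegative term written out, and it deduces $L>0$ and $L^2>24r\tr(D^2)$ from the downward-opening quadratic just as you do.
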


\begin{proof}
Let $P_W$ be the $g$-orthogonal projection onto $W$, and set $L=3\tr D+2\tr(D|_W)$. Define $F(t)=-3\tr(D^2)+Lt-2rt^2$ for $t\in\R$. We first prove that $F(c)>0$.

Since $D$ is a derivation, $D[x,y]=[Dx,y]+[x,Dy]\in W$ for all $x,y\in\nalg$. Thus $D$ preserves $W$. Choose an orthonormal basis $w_1,\ldots,w_r$ of $W$ and complete it by an orthonormal basis $u_1,\ldots,u_s$ of $W^\perp$. In this basis, $\tr(P_WD)=\sum_{a=1}^r g(Dw_a,w_a)=\tr(D|_W)$ and $\tr P_W=r$.

Since $D$ is symmetric and $c=-\lambda$, the trace identity \eqref{eq:nilsoliton-trace} gives $\tr(D^2)=c\tr D-c|H|^2/6$. Using this identity, $\tr(H^2)=|H|^2$, and the metric equation $D-c\Id=\Ric-H^2/4$, we obtain
\begin{equation}\label{eq:metabelian-spectral-remainder}
\begin{aligned}
F(c)&=3\bigl(c\tr D-\tr(D^2)\bigr)
+2c\bigl(\tr(D|_W)-rc\bigr)\\
&=\frac c2\tr(H^2)+2c\tr\bigl(P_W(D-c\Id)\bigr)\\
&=2c\tr(P_W\Ric)+\frac c2\tr\bigl((\Id-P_W)H^2\bigr).
\end{aligned}
\end{equation}
By Lemma~\ref{lem:metabelian-partial-Ricci}, $\tr(P_W\Ric)>0$. Since $c>0$, the first term in the last line of \eqref{eq:metabelian-spectral-remainder} is strictly positive. For the second term, the definition of $H^2$ gives $\tr((\Id-P_W)H^2)=\sum_{i=1}^s g(H^2u_i,u_i)=\sum_{i=1}^s|\iota_{u_i}H|^2\geq0$. Therefore $F(c)>0$, proving \eqref{eq:metabelian-spectral-obstruction}. These partial-trace formulas remain valid without assuming that $\Ric$ or $H^2$ preserves $W$.

Finally, $r>0$ because $\nalg$ is nonabelian, and $\tr(D^2)=\|D\|_{\mathrm{HS}}^2\geq0$ because $D$ is symmetric. Hence $F(c)>0$ implies $cL>3\tr(D^2)+2rc^2>0$. As $c>0$, we obtain $L>0$. Completing the square gives $F(c)=-2r(c-L/(4r))^2+(L^2-24r\tr(D^2))/(8r)$. The first term is nonpositive, whereas $F(c)>0$. Thus $L^2-24r\tr(D^2)>0$, which proves the remaining assertion.
\end{proof}

We next use the action of derivations on the quotient by the center. 
\begin{theorem}\label{thm:central-quotient-obstruction}
Let $\nalg$ be a nonabelian nilpotent Lie algebra. If every derivation diagonalizable over $\R$ induces the zero map on $\nalg/Z(\nalg)$, then $\nalg$ admits no generalized nilsoliton.
\end{theorem}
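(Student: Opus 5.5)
The plan is to argue by contradiction, combining the decomposition $\nalg=\ker D\oplus\im D$ with the torsion equation. The goal is to show that $\ker D$ contains a nonzero central vector lying in the derived algebra and annihilated by $H$. On such a vector the metric equation forces negative Ricci curvature, while the formula for the Ricci operator forces it to be positive.

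Suppose $(g,H)$ is a generalized nilsoliton on $\nalg$. By Corollary~\ref{cor:reduced-soliton-system} and Proposition~\ref{prop:nilsoliton-trace}, $\lambda=-c<0$ and $D$ is a nonzero symmetric derivation. It is therefore diagonalizable over $\R$, and $\nalg=\ker D\oplus\im D$ orthogonally.

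\textbf{Structural consequences of the hypothesis.} The hypothesis gives $\im D\subset Z(\nalg)$. Then for all $x,y$ we have $D[x,y]=[Dx,y]+[x,Dy]=0$, so $[\nalg,\nalg]\subset\ker D$, which is orthogonal to $\im D$. Hence every $z\in\im D$ is central and orthogonal to all brackets. From \eqref{eq:moment-map-operator}, both terms vanish when paired with such a $z$, so $\Ric z=0$.

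\textbf{Constraining the torsion.} Take an orthonormal eigenbasis of $D$ with eigenvalues $d_i$: $d_i=0$ on $\ker D$ and $d_i\neq0$ on $\im D$. For an eigenvector $z\in\im D$, the metric equation paired with $z$ gives $d_z=c-\tfrac14 h_z$, where $h_z=|\iota_zH|^2\geq0$. Thus $d_z\leq c$. By the remark after Corollary~\ref{cor:reduced-soliton-system}, a nonzero component $H_{ijk}$ requires $d_i+d_j+d_k=2c>0$.
\begin{itemize}
\item At least two of the indices must lie in $\im D$, since the other eigenvalues are $0$ and each $d\le c$.
\item If exactly two lie in $\im D$, both eigenvalues equal $c$. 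Then $h_z=0$ for those $z$, i.e.\ $\iota_zH=0$, contradicting $H_{ijk}\neq0$.
\end{itemize}
So every nonzero component of $H$ has all three indices in $\im D$. Consequently $\iota_xH=0$, and hence $H^2x=0$, for every $x\in\ker D$. This case analysis is the step I expect to be most delicate: one must check that the bound $d_z\le c$ really holds for every eigenvector in $\im D$, including those with negative eigenvalue, and that it excludes the mixed index patterns.

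\textbf{The contradiction.} Since $\nalg$ is nonabelian and nilpotent, the last nonzero term of its lower central series lies in $Z(\nalg)\cap[\nalg,\nalg]$; choose a nonzero $w$ there. By the first step, $w\in\ker D$. The metric equation gives $g(\Ric w,w)=-c|w|^2+\tfrac14 g(H^2w,w)+g(Dw,w)=-c|w|^2<0$. On the other hand, $w$ is central, so only the positive term of \eqref{eq:moment-map-operator} survives. Exactly as at the end of the proof of Proposition~\ref{prop:configuration-not-Einstein}, this gives $g(\Ric w,w)=\tfrac14\sum_{a,b}g([e_a,e_b],w)^2>0$, because $w\in[\nalg,\nalg]$ is nonzero and hence not orthogonal to every bracket. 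This contradiction also covers $H=0$.
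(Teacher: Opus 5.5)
Your proof is correct and follows the paper's argument. It uses the same splitting $\nalg=\ker D\oplus\im D$ with $\im D$ central, the same bound $D|_{\im D}\leq c$ obtained from $\Ric|_{\im D}=0$, and the same torsion case analysis forcing $\iota_xH=0$ for every $x\in\ker D$. The only difference is cosmetic and comes at the last step: you pair the metric equation directly with a nonzero $w\in Z(\nalg)\cap[\nalg,\nalg]\subset\ker D$. The paper instead first observes that $\ker D\oplus\im D$ is an orthogonal Lie direct sum, reduces the metric equation to $\Ric_K=-c\Id_K$ on $K=\ker D$, and then tests this against a central vector of $K$.
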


\begin{proof}
Suppose that $(g,H)$ is a generalized nilsoliton with soliton derivation $D$. By Proposition~\ref{prop:nilsoliton-trace}, we have $c=-\lambda>0$. Harmonic reduction gives $\rho(D)H=-2cH$. Set $K=\ker D$ and $V=\im D$. The metric equation makes $D$ symmetric, so it is diagonalizable over $\R$. The hypothesis therefore applies to $D$ and gives $V\subset Z(\nalg)$.

Symmetry also gives $V=K^\perp$, and hence $\nalg=K\oplus V$ orthogonally. Since $Dx$ and $Dy$ are central, the derivation identity gives $D[x,y]=[Dx,y]+[x,Dy]=0$ for all $x,y\in\nalg$. Thus $[\nalg,\nalg]\subset K$, so $K$ is an ideal. Since $V$ is central, both subspaces are ideals and $[K,V]=0$. Consequently, this decomposition is an orthogonal Lie direct sum, with $[\nalg,\nalg]=[K,K]\neq0$.

We next determine the metric equation on $V$. For $v\in V$, all brackets $[v,x]$ vanish, and every bracket $[x,y]$ belongs to $K=V^\perp$. Both contractions in \eqref{eq:moment-map-operator} therefore vanish whenever one argument is $v$. Hence $\Ric v=0$. The metric equation now gives $H^2v=4(cv-Dv)\in V$, so $H^2$ preserves $V$ and
\begin{equation}\label{eq:central-quotient-D-bound}
D|_V=c\Id_V-\frac14H^2|_V\leq c\Id_V.
\end{equation}
Here the inequality follows from $g(H^2v,v)=|\iota_vH|^2\geq0$. If $Dv=cv$, then the same identity gives $|\iota_vH|^2=0$, and therefore $\iota_vH=0$.

Choose an orthonormal eigenbasis $\{e_i\}$ of $D$ adapted to $K\oplus V$, and write $De_i=\delta_i e_i$. The eigenvalues on $K$ are zero. Together with $c>0$ and \eqref{eq:central-quotient-D-bound}, this shows that every eigenvalue of $D$ is at most $c$. Evaluating $\rho(D)H=-2cH$ on $e_i,e_j,e_k$ gives $(\delta_i+\delta_j+\delta_k-2c)H(e_i,e_j,e_k)=0$.

Suppose that $e_i\in K$ and $H(e_i,e_j,e_k)\neq0$. Since $\delta_i=0$, the preceding identity gives $\delta_j+\delta_k=2c$. Both eigenvalues are at most $c$, so $\delta_j=\delta_k=c$. As $c\neq0$, the corresponding eigenvectors belong to $V$. By the equality case considered above, $\iota_{e_j}H=\iota_{e_k}H=0$, contradicting $H(e_i,e_j,e_k)\neq0$. Thus every component with an argument in $K$ vanishes. By linearity and alternation, $\iota_xH=0$ for every $x\in K$. Equivalently, $H$ is the extension by zero of a three-form on $V$, and $H^2|_K=0$.

Since the decomposition is an orthogonal Lie direct sum, $\Ric|_K=\Ric_K$. The metric equation on $K$ therefore reduces to $\Ric_K=-c\Id_K$. However, $K$ is nilpotent and $[K,K]\neq0$, so its center contains a nonzero vector $z$. For an orthonormal basis $\{f_a\}$ of $K$, centrality of $z$ and \eqref{eq:moment-map-operator} give $g(\Ric_Kz,z)=\frac14\sum_{a,b}g([f_a,f_b],z)^2\geq0$. On the other hand, $\Ric_K=-c\Id_K$ gives $g(\Ric_Kz,z)=-c|z|^2<0$. This contradiction proves the theorem.
\end{proof}

The nonabelian hypothesis in Theorem~\ref{thm:central-quotient-obstruction} is necessary. On $\R^3$ with an orthonormal coframe, $H=e^{123}$, $\lambda=-3/2$, and $D=\Id$ satisfy \eqref{eq:reduced-soliton-system}, although the quotient by the center is zero.

Recall that a Lie algebra is characteristically nilpotent if all its derivations are nilpotent. A nonabelian characteristically nilpotent Lie algebra admits no generalized nilsoliton by the positive-trace requirement in Proposition~\ref{prop:nilsoliton-trace}, as already observed in \cite[Section~7, after Question~7.14]{FuLaSt2024}. Burde's construction gives examples in every dimension $n\geq7$ \cite[Proposition~2.13]{Bu2002}. We show that this nonexistence persists after adjoining an abelian factor.

\begin{corollary}\label{cor:abelian-stabilization-obstruction}
Let $\mathfrak c$ be a nonabelian characteristically nilpotent real Lie algebra. Then $\mathfrak c\oplus\R^k$ admits no generalized nilsoliton for any $k\geq0$.
\end{corollary}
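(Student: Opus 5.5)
The plan is to apply Theorem~\ref{thm:central-quotient-obstruction} to $\nalg=\mathfrak c\oplus\R^k$. Since $\mathfrak c$ is nonabelian, $\nalg$ is nonabelian and nilpotent. So it suffices to show that every derivation of $\nalg$ that is diagonalizable over $\R$ induces the zero map on $\nalg/Z(\nalg)$. Here $Z(\nalg)=Z(\mathfrak c)\oplus\R^k$, hence $\nalg/Z(\nalg)\cong\mathfrak c/Z(\mathfrak c)$. The case $k=0$ can be handled the same way, or directly by the positive-trace remark after Proposition~\ref{prop:nilsoliton-trace}.

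First I will write a derivation $E$ of $\nalg$ in block form relative to $\mathfrak c\oplus\R^k$:
\[
E=\begin{pmatrix}A&B\\ C&F\end{pmatrix},\qquad A\colon\mathfrak c\to\mathfrak c,\ B\colon\R^k\to\mathfrak c,\ C\colon\mathfrak c\to\R^k,\ F\colon\R^k\to\R^k.
\]
Applying the derivation identity to $x,y\in\mathfrak c$ and projecting onto $\mathfrak c$ gives $A\in\Der(\mathfrak c)$. Projecting onto $\R^k$ gives $C([\mathfrak c,\mathfrak c])=0$. Applying it to $x\in\mathfrak c$ and $v\in\R^k$ gives $0=[x,Bv]$, so $B(\R^k)\subset Z(\mathfrak c)$. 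Consequently $E(Z(\nalg))\subset Z(\nalg)$, as for any derivation. The map $\bar E$ induced by $E$ on $\nalg/Z(\nalg)$ therefore coincides with the map $\bar A$ induced by $A$ on $\mathfrak c/Z(\mathfrak c)$, since $B$ and $C$ contribute only central or $\R^k$ components.

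Next I will use that $E$ is diagonalizable over $\R$. Then its restriction to the invariant subspace $Z(\nalg)$ is diagonalizable, and so is the induced quotient map $\bar E=\bar A$. On the other hand, $A$ is a derivation of the characteristically nilpotent algebra $\mathfrak c$, hence nilpotent, and therefore $\bar A$ is nilpotent. A diagonalizable nilpotent endomorphism is zero, so $\bar E=0$. Theorem~\ref{thm:central-quotient-obstruction} then rules out a generalized nilsoliton on $\nalg$.

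The main obstacle is the block bookkeeping. One must check that the off-diagonal pieces $B$ and $C$ do not affect the induced map on $\nalg/Z(\nalg)$, and that diagonalizability passes to the quotient by an invariant subspace. Both follow from $B(\R^k)\subset Z(\mathfrak c)$ and from the fact that the minimal polynomial of the quotient map divides that of $E$.
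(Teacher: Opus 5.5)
Your proposal is correct and follows the paper's proof essentially step for step. You use the same block decomposition, the same consequences $A\in\Der(\mathfrak c)$, $C([\mathfrak c,\mathfrak c])=0$ and $\im B\subset Z(\mathfrak c)$, and the same observation that the induced map on $\nalg/Z(\nalg)\cong\mathfrak c/Z(\mathfrak c)$ is both nilpotent and diagonalizable, hence zero, before invoking Theorem~\ref{thm:central-quotient-obstruction}.
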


\begin{proof}
Let $A_0$ be an arbitrary derivation of $\mathfrak c\oplus\R^k$, and write its matrix relative to this decomposition as $A_0=\left(\begin{smallmatrix}A&B\\ C&F\end{smallmatrix}\right)$. Applying the derivation identity to two vectors in $\mathfrak c$ gives $A\in\Der(\mathfrak c)$ and $C([\mathfrak c,\mathfrak c])=0$. Applying it to one vector in $\mathfrak c$ and one in $\R^k$ gives $\im B\subset Z(\mathfrak c)$.

Since $Z(\mathfrak c\oplus\R^k)=Z(\mathfrak c)\oplus\R^k$, the map induced by $A_0$ on the quotient by the center is the map induced by $A$ on $\mathfrak c/Z(\mathfrak c)$. It is nilpotent because $A$ is nilpotent. If $A_0$ is diagonalizable over $\R$, its induced map on the quotient is also diagonalizable and must therefore be zero. Theorem~\ref{thm:central-quotient-obstruction} proves the claim.
\end{proof}

For $k\geq1$, the derivation $0\oplus\Id_{\R^k}$ is diagonalizable and has trace $k>0$. Thus the positive-trace requirement alone does not exclude the Lie algebras in Corollary~\ref{cor:abelian-stabilization-obstruction}.

Finally, nonzero torsion gives a nonzero eigenvector in third Chevalley--Eilenberg cohomology. Here $H^3_{\mathrm{CE}}(\nalg)$ denotes the cohomology of the invariant complex.

\begin{proposition}\label{prop:CE-spectral-prerequisite}
Let $(g,H)$ be a generalized nilsoliton on a nilpotent Lie algebra $\nalg$, with $H\neq0$, $c=-\lambda>0$, and derivation $D$. Then $[H]\neq0$ in $H^3_{\mathrm{CE}}(\nalg)$ and
\begin{equation}\label{eq:CE-spectral-prerequisite}
\rho(D)[H]=-2c[H].
\end{equation}
\end{proposition}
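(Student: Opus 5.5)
The plan is to use the harmonic reduction from Corollary~\ref{cor:reduced-soliton-system}, which gives $d_\mu H=0$, $\CEadj H=0$ and $\rho(D)H=2\lambda H=-2cH$. Two things then need checking: that $\rho(D)$ descends to Chevalley--Eilenberg cohomology, and that a nonzero harmonic form is not exact.

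\emph{Well-definedness.} Since $D\in\Der(\mu)$, we have $\theta(D)\mu=0$. The identity $d_{\theta(A)\mu}=[\rho(A),d_\mu]$ from Section~\ref{sec:algebraic-setup} then gives $[\rho(D),d_\mu]=0$. Hence $\rho(D)$ maps closed forms to closed forms and exact forms to exact forms, so it induces an endomorphism of $H^3_{\mathrm{CE}}(\nalg)$. Applying this to the closed form $H$ gives $\rho(D)[H]=[\rho(D)H]=[-2cH]=-2c[H]$, which is \eqref{eq:CE-spectral-prerequisite}.

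\emph{Nonvanishing of the class.} Suppose $H=d_\mu\beta$ for some $\beta\in\Lambda^2\nalg^*$. Since $\nalg$ is nilpotent, it is unimodular, so $\geomcod=\CEadj$. By the pairing identity \eqref{eq:CE-adjoint-pairing},
\begin{equation*}
|H|^2=\bar g(H,d_\mu\beta)=3\,\bar g(\CEadj H,\beta)=0,
\end{equation*}
because $\CEadj H=0$. This forces $H=0$, contradicting the hypothesis $H\neq0$. Therefore $[H]\neq0$.

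None of these steps is a real obstacle. The only points requiring care are that harmonicity comes from Corollary~\ref{cor:nilpotent-harmonicity} through the reduced system, and that the commutation of $\rho(D)$ with $d_\mu$ follows from the derivation property via the variation formula already recorded in the paper.
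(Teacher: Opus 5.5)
Your proposal is correct and follows essentially the same route as the paper: $\rho(D)$ commutes with $d_\mu$ because $D$ is a derivation, so it acts on $H^3_{\mathrm{CE}}(\nalg)$ and the torsion equation $\rho(D)H=-2cH$ descends to $[H]$. Exactness is ruled out exactly as in the paper, via $|H|^2=3\,\bar g(\CEadj H,\beta)=0$ from \eqref{eq:CE-adjoint-pairing}. Your justification of the commutation through $d_{\theta(D)\mu}=[\rho(D),d_\mu]$ with $\theta(D)\mu=0$ simply makes explicit a step the paper states without proof.
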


\begin{proof}
Since $D$ is a derivation, $\rho(D)$ commutes with $d_\mu$ and therefore acts on Chevalley--Eilenberg cohomology. Harmonicity gives $d_\mu H=\CEadj H=0$. If $H=d_\mu\alpha$ for an invariant two-form $\alpha$, then \eqref{eq:CE-adjoint-pairing} gives $|H|^2=3\bar g(\CEadj H,\alpha)=0$, a contradiction. Hence $[H]\neq0$. The torsion equation $\rho(D)H=-2cH$ descends to this class and proves \eqref{eq:CE-spectral-prerequisite}.
\end{proof}

\section{The filiform family \texorpdfstring{$\mathfrak m_2(n)$}{m2(n)}}\label{sec:filiform-threshold}

In this section, we prove that $\mathfrak m_2(n)$ admits a generalized nilsoliton if and only if $5\leq n\leq8$. We construct metrics with nonzero harmonic torsion in these dimensions. The spectral obstruction from Section~\ref{sec:intrinsic-obstructions} excludes $n\geq10$. In dimension nine, we combine it with a computation of third Chevalley--Eilenberg cohomology to exclude every metric and closed three-form.

Fix $n\geq5$ and the basis $E_1,\ldots,E_n$ in \eqref{eq:m2-introduction}. The only potentially nonzero Jacobi identities involve $E_1,E_2,E_j$, and their two terms cancel. The derived ideal $W=\operatorname{span}\{E_3,\ldots,E_n\}$ is abelian. The lower central terms are $\gamma_r=\operatorname{span}\{E_{r+1},\ldots,E_n\}$ for $2\leq r\leq n-1$, followed by $\gamma_n=0$, so the algebra is filiform. Its center is $\R E_n$, which also proves indecomposability, since a direct sum of two nonzero nilpotent Lie algebras has center dimension at least two.

The map $\phi E_i=iE_i$ is a positive derivation. Assign weight $i$ to $E^i$ and weight $i_1+\cdots+i_p$ to $E^{i_1}\wedge\cdots\wedge E^{i_p}$. The differential preserves weight. Write $H^p_{\mathrm{CE},q}$ for the weight-$q$ part of Chevalley--Eilenberg cohomology.

\begin{lemma}\label{lem:m2-arbitrary-derivations}
Every derivation $A$ of $\mathfrak m_2(n)$ is lower triangular in the fixed basis, with diagonal $t(1,2,\ldots,n)$ for some $t\in\R$. If $b_{3,q}=\dim H^3_{\mathrm{CE},q}(\mathfrak m_2(n))$, the action induced by $\rho(A)$ on $H^3_{\mathrm{CE}}(\mathfrak m_2(n))$ has characteristic polynomial $\prod_q(\xi+tq)^{b_{3,q}}$.
\end{lemma}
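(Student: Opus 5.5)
First I determine the derivations. Since $\mathfrak m_2(n)$ is generated by $E_1$ and $E_2$, any derivation $A$ is fixed by $AE_1$ and $AE_2$. Write $AE_1=\sum_k a_kE_k$ and $AE_2=\sum_k b_kE_k$. Applying $A$ to $E_{i+1}=[E_1,E_i]$ recursively, I get $AE_3=[AE_1,E_2]+[E_1,AE_2]$ and so on. Next I impose compatibility with the second family $[E_2,E_j]=E_{j+2}$, starting with $[E_2,E_3]=E_5$. This gives $AE_5=[AE_2,E_3]+[E_2,AE_3]$, which must agree with the value $AE_5=[E_1,AE_4]$ obtained from the first family.

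The point is to show that the $E_1$-component of $AE_2$ vanishes and the $E_2$-component of $AE_1$ vanishes, i.e. $b_1=0$ and $a_2=0$.

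\textbf{The coefficient $b_1$.} If $AE_2$ had an $E_1$-component $b_1$, then $[AE_2,E_3]$ would contain $b_1E_4$, a term of weight $4$. Comparing the $E_4$-coefficients of the two expressions for $AE_5$ forces $b_1=0$. Here it helps that $E_4$ appears in $[E_1,AE_4]$ only through the $E_3$-component of $AE_4$, which is itself determined by lower data.

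\textbf{The coefficient $a_2$.} If instead $AE_1$ has an $E_2$-component $a_2$, then $[AE_1,E_i]$ picks up $a_2E_{i+2}$. I follow this through the recursion and check which coefficients it feeds into when the two expressions for $AE_5$ are compared. This bookkeeping is the main obstacle: it has to be done carefully and uniformly in $n\geq5$. If the direct comparison is messy, a cleaner route is available: a derivation preserves $\gamma_r$ and the centralizer chain, in particular the characteristic ideal $\{x:[x,\gamma_{n-3}]\ldots\}$. Alternatively, use the standard fact that for $\mathfrak m_2(n)$, $\operatorname{span}\{E_2,\ldots,E_n\}$ is the centralizer-type ideal $C(\gamma_2)$ modulo center. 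Concretely, $E_2$ centralizes $\gamma_{n-3}$ modulo terms, while $E_1$ does not. That would give $a_2$-freedom but force triangularity in the sense that $A$ preserves $\operatorname{span}\{E_2,\ldots,E_n\}$, i.e. $b_1=0$.

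\textbf{Triangularity and the diagonal.} With $b_1=0$, the induction shows $AE_i\in a_1E_i+\ldots$ plus lower filtration terms. Lower triangularity (i.e. $AE_i\in\operatorname{span}\{E_i,\dots,E_n\}$ up to the $E_1,E_2$ mixing) follows because $A$ preserves each $\gamma_r=\operatorname{span}\{E_{r+1},\ldots,E_n\}$ together with $\operatorname{span}\{E_2,\ldots,E_n\}$. Induction on the first family gives diagonal entries $A_{ii}=a_1(i-2)+b_2$ for $i\geq3$. The relation $[E_2,E_3]=E_5$ then gives $b_2+A_{33}=A_{55}$, i.e. $2b_2+a_1=3a_1+b_2$, so $b_2=2a_1$. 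Setting $t=a_1$, the diagonal is $t(1,2,\ldots,n)$.

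\textbf{Characteristic polynomial.} $A$ is lower triangular, so $\rho(A)$ is triangular on $\Lambda^3$ with respect to the weight filtration. Its diagonal action on $E^{i_1}\wedge E^{i_2}\wedge E^{i_3}$ is $-t(i_1+i_2+i_3)$. More intrinsically, write $A=t\phi+N$ with $N$ strictly lower triangular, so $N$ raises weight. Then $\rho(A)=-t\,\rho_0+\rho(N)$ is block triangular with respect to the decreasing weight filtration of the complex. This filtration is $d_\mu$-compatible because $d_\mu$ preserves weight, so it induces a filtration on $H^3_{\mathrm{CE}}$ with graded pieces $H^3_{\mathrm{CE},q}$. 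On each graded piece $\rho(A)$ acts by $-tq$. Hence the characteristic polynomial of the induced action is $\prod_q(\xi+tq)^{b_{3,q}}$.
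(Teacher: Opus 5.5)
This is correct and is essentially the paper's proof: you get $b_1=0$ from the $E_4$-coefficient of the derivation identity for $[E_2,E_3]=E_5$, lower triangularity from preservation of the lower central series, and $b_2=2a_1$ from the $E_5$-coefficient of the same identity. The cohomological part likewise matches the paper, using the splitting $A=t\phi+N$ with $\rho(N)$ strictly lowering cochain weight on cohomology.

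You should delete the paragraph on $a_2$, including the centralizer remarks, which are garbled. An $E_2$-component of $AE_1$ is a below-diagonal entry, so it is irrelevant to lower triangularity. Moreover, $a_2=0$ is false for $n=5$: the map $E_1\mapsto E_2$, $E_4\mapsto E_5$, zero on $E_2,E_3,E_5$, is a derivation of $\mathfrak m_2(5)$. Finally, the first-family expression should read $AE_5=[AE_1,E_4]+[E_1,AE_4]$, not $[E_1,AE_4]$ alone.
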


\begin{proof}
Every derivation preserves the lower central series, so $AE_i\in\operatorname{span}\{E_i,\ldots,E_n\}$ for $i\geq3$. In the derivation identity for $[E_2,E_3]=E_5$, the $E_4$ coefficient on the right is the $E_1$ coefficient of $AE_2$. The left side has no $E_4$ component, so this coefficient vanishes and $A$ is lower triangular.

Let $a,b$ be its first two diagonal entries. Recursion through $E_i=[E_1,E_{i-1}]$ gives the $i$th diagonal entry $(i-2)a+b$ for $i\geq2$. Comparing the $E_5$ coefficients in the identity for $[E_2,E_3]=E_5$ gives $b+(a+b)=3a+b$, and hence $b=2a$. Taking $t=a$ proves the first assertion.

Write $A=t\phi+N$, where $N$ is a strictly lower triangular derivation. The action $\rho(N)$ commutes with $d_\mu$ and replaces a covector $E^j$ only by covectors $E^i$ with $i<j$. It therefore strictly lowers cochain weight and induces a weight-lowering map on cohomology. On $H^3_{\mathrm{CE},q}$, the diagonal part $t\rho(\phi)$ acts by $-tq$. Thus the induced action is triangular with the asserted characteristic polynomial.
\end{proof}

\begin{proposition}\label{prop:m2-classical-range}
The algebra $\mathfrak m_2(n)$ admits a classical nilsoliton metric exactly when $5\leq n\leq6$.
\end{proposition}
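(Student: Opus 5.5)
We prove the two directions separately. For $n\geq7$ we use the pre-Einstein derivation; for $n=5,6$ we use the nice-basis Gram criterion.

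\emph{Nonexistence for $n\geq7$.} By Lemma~\ref{lem:m2-arbitrary-derivations}, every derivation $A$ has diagonal $t(1,2,\ldots,n)$, so $\tr A=t\,n(n+1)/2$. Set $\phi_0=s\phi$ with $s=\tr\phi/\tr(\phi^2)$, where $\tr\phi=n(n+1)/2$ and $\tr\phi^2=n(n+1)(2n+1)/6$.

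Only diagonal entries enter traces against diagonal operators, so $\tr(\phi_0A)=st\tr(\phi^2)=t\tr\phi=\tr A$. Hence $\phi_0$ is pre-Einstein, with eigenvalues $si$ for $1\le i\le n$.

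Suppose a classical nilsoliton exists. As in the proof of Proposition~\ref{prop:configuration-not-Einstein}, $D/c$ is then pre-Einstein, so by \cite[Theorem~1]{Ni2011} it is conjugate by an automorphism to $\phi_0$. It follows that $\phi$ is, up to scaling, the Nikolayevsky derivation of an Einstein nilradical.

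In the basis $E_i$ (nice, with brackets $(1,i)\to i+1$ and $(2,j)\to j+2$), we then apply Nikolayevsky's criterion \cite[Theorem~3]{Ni2011}. It requires that the vector $[\alpha_a]$ of the weights $\alpha_a$, in the weight convex hull, satisfy $Gx=\one$ with $x>0$; this is Payne's criterion \cite{Pa2010}.

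For $n\geq7$ we exhibit the failure directly. The rows for the brackets $(2,j)\to j+2$ and $(1,i)\to i+1$ force $x$ to violate positivity. Alternatively, we cite \cite{Pa2010,Ni2008}, which treat exactly $\mathfrak m_2(n)$, as the introduction indicates. I would simply invoke these references for $n\geq7$: the nonexistence for $n\geq7$ is stated there.

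\emph{Existence for $n=5,6$.} These algebras have dimension at most six, so existence follows from \cite{La2002} for $n=5$ and from \cite{Wi2003} for $n=6$.

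For an explicit check, use the nice basis $E_i$ with the $m=2n-7$ brackets. Solve $Gx=\one$ for the Gram matrix of the weights $\alpha_a=E_{kk}-E_{ii}-E_{jj}$ and verify $x>0$. The theorem in Section~\ref{sec:weights} with $\mathcal I=\emptyset$ then produces a diagonal nilsoliton metric.

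For $n=5$ there are three brackets, and for $n=6$ there are five, so both are small linear systems.

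\emph{Main obstacle.} The main difficulty is a self-contained proof of nonexistence for $n\geq7$. If one does not simply cite \cite{Pa2010,Ni2008}, one must show that the unique solution of the singular or overdetermined system $Gx=\one$ has a nonpositive entry.

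I would try first to compute the projection of $0$ onto the affine hull of the weights. That projection is the pre-Einstein vector, proportional to $-\diag(1,\ldots,n)$ shifted. I would then show, via the explicit equations $\langle\alpha_a,\beta\rangle$ constant, that the barycentric coefficients on the brackets $(2,j)$ become negative once $n\ge7$.
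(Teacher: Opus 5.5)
Your argument is complete only as a proof by citation, and on that level it matches the paper's opening sentences: existence for $n=5,6$ comes from \cite{La2002,Wi2003}, and nonexistence for $n\geq7$ from \cite{Pa2010} and \cite{Ni2008}. One small slip in your optional check: the nice basis has $(n-2)+(n-4)=2n-6$ brackets, so four for $n=5$ and six for $n=6$, not $2n-7$.

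The self-contained nonexistence proof, which you flag as the main obstacle, is what the paper adds, and your sketched route to it would not work as stated. Every weight $\alpha_a=E_{kk}-E_{ii}-E_{jj}$ has trace $-1$ and satisfies $\tr(\alpha_a\phi)=k-i-j=0$. Hence the $2n-6$ weights lie in an affine subspace of dimension $n-2$. For $n\geq6$ they are affinely dependent, so $G$ has a nontrivial kernel and $Gx=\one$ has no unique solution. Exhibiting one representation of the closest point with negative coefficients on the $(2,j)$ brackets therefore proves nothing. What you need is a linear functional separating that point from $\operatorname{relint}C$. The nice-basis route also relies on Nikolayevsky's result that the nilsoliton can be taken diagonal in the nice basis.

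The paper's direct argument avoids the nice basis and the pre-Einstein conjugacy, and it works for an arbitrary metric.
\begin{itemize}
\item The soliton derivation $D$ is symmetric, so Lemma~\ref{lem:m2-arbitrary-derivations} applied to $D$ itself shows that its eigenvalues are $t,2t,\ldots,nt$. Moreover $t>0$ because $\tr D>0$.
\item Pairing $\Ric=-c\Id+D$ with $D$ and using $\tr(\Ric D)=0$ gives $t/c=T/Q=3/(2n+1)$, where $T=\sum_i i$ and $Q=\sum_i i^2$.
\item $D$ preserves $W=[\mathfrak m_2(n),\mathfrak m_2(n)]$ and has eigenvalues $3t,\ldots,nt$ there. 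Therefore
\[
\tr(P_W\Ric)=-c(n-2)+t(T-3)=\frac{c(n-2)(7-n)}{2(2n+1)}\leq0\qquad(n\geq7).
\]
\item This contradicts the strict positivity $\tr(P_W\Ric)=\frac14\sum_{i,j}|[u_i,u_j]|^2>0$ in Lemma~\ref{lem:metabelian-partial-Ricci}.
\end{itemize}

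In your convex-geometric language, $\tr(P_W\,\cdot)$ is exactly the missing separating functional. It equals $1$ on the weight of $[E_1,E_2]=E_3$ and $0$ on every other weight, so it is strictly positive on $\operatorname{relint}C$. By contrast, at $-c\Id+t\phi$, a positive multiple of $-(2n+1)\Id+3\phi$, it equals a positive multiple of $(n-2)(7-n)/2\leq0$ once $n\geq7$.
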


\begin{proof}
Existence in dimensions five and six follows from \cite{La2002,Wi2003}. Nonexistence is proved in \cite[Theorem~32]{Pa2010} for $n=7$ and in \cite[Theorem~2(1) and Section~4.1]{Ni2008} for $n\geq8$. We give a direct proof using the partial Ricci trace.

Suppose $\Ric=-c\Id+D$, with $c>0$. By Lemma~\ref{lem:m2-arbitrary-derivations}, the eigenvalues of $D$ are $t,2t,\ldots,nt$, and positive trace gives $t>0$. Put $T=\sum_{i=1}^n i$ and $Q=\sum_{i=1}^n i^2$. The identity $\tr(\Ric D)=0$ gives $-ctT+t^2Q=0$, so $t/c=T/Q=3/(2n+1)$. If $P_W$ is orthogonal projection onto the derived ideal, then
\begin{equation}\label{eq:m2-classical-compressed-trace}
\tr(P_W\Ric)=-c(n-2)+t(T-3)
=\frac{c(n-2)(7-n)}{2(2n+1)}\leq0\qquad(n\geq7).
\end{equation}
Equation~\eqref{eq:m2-classical-compressed-trace} contradicts Lemma~\ref{lem:metabelian-partial-Ricci}.
\end{proof}

We now construct generalized nilsolitons with nonzero torsion. In each construction, the fixed basis is orthogonal, $g_i=g(E_i,E_i)$, and $e_i=E_i/\sqrt{g_i}$, so $e^i=\sqrt{g_i}E^i$. A bracket $[E_i,E_j]=E_k$ has orthonormal coefficient $\sqrt{g_k/(g_ig_j)}$. Thus the coefficients below are determined by metrics on the fixed Lie algebra.

The fixed basis is nice, and the torsion triples used below have pairwise intersections of size at most one. Hence $\Ric$ and $H^2$ are diagonal by Section~\ref{sec:weights}. Also, distinct cochain weights are orthogonal for a diagonal metric. Since $d_\mu$ preserves weight, so does $\CEadj$. These observations reduce the tensor and harmonicity calculations to the components displayed below.

\begin{proposition}\label{prop:m2-low-dimensional-existence}
The following metrics and three-forms define generalized nilsolitons with nonzero torsion on $\mathfrak m_2(5)$ and $\mathfrak m_2(6)$.
\begin{equation}\label{eq:m2-five-data}
n=5:\quad g=\diag(1,2,60/7,30,360/7),\quad H=\sqrt{5/7}\,e^{125}-\frac5{\sqrt{14}}e^{134},\quad \lambda=-8,\quad D=2\phi;
\end{equation}
\begin{equation}\label{eq:m2-six-data}
n=6:\quad g=\diag(1,3,16,96,256,768),\quad H=\sqrt3\,e^{156}+\frac{2\sqrt6}{3}e^{246}-\sqrt6\,e^{345},\quad \lambda=-12,\quad D=2\phi.
\end{equation}
\end{proposition}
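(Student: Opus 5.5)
The plan is to verify the reduced system of Corollary~\ref{cor:reduced-soliton-system} directly for the displayed data, in the fixed basis $E_1,\ldots,E_n$. That basis is nice, and the torsion triples pairwise meet in at most one index ($\{1,2,5\},\{1,3,4\}$ for $n=5$; $\{1,5,6\},\{2,4,6\},\{3,4,5\}$ for $n=6$). Hence, as noted before the statement, $\Ric_\mu$ and $H^2$ are diagonal in the orthonormal frame $e_i=E_i/\sqrt{g_i}$. The argument then splits into three checks: the torsion and derivation conditions, harmonicity of $H$, and the metric equation. Equivalently, the last two checks can be packaged via Proposition~\ref{thm:augmented} as $\CEadj H=0$ together with $G_{\mu,H}z=16\,\one_N$ (respectively $24\,\one_N$).

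First, the derivation and torsion conditions are essentially free. The map $D=2\phi$ is a derivation because $\phi$ is, and it is symmetric because the basis is orthogonal. In the eigenbasis, $\rho(D)H=2\lambda H$ amounts to the requirement that each torsion triple $I$ satisfies $2\sum_{i\in I}i=-2\lambda$. For $n=5$ every triple has index sum $8$, so $16=-2\lambda$. For $n=6$ every triple has index sum $12$, so $24=-2\lambda$.

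Second, I handle harmonicity by the weight grading. All torsion summands have cochain weight $q=8$ (respectively $12$). Since $d_\mu$ and $\CEadj$ preserve weight for a diagonal metric, it suffices to check two things inside a single weight space.
\begin{itemize}
\item Closedness, $d_\mu H=0$: compute $d_\mu E^{I}$ for each triple using $d_\mu E^k=-\sum E^{i}\wedge E^{j}$ over the brackets $[E_i,E_j]=E_k$ with $i<j$, and check that the weight-$q$ four-form components cancel. A single summand need not be closed. For instance, in $n=5$ the terms produced by $E^{125}$ and $E^{134}$ must cancel against each other, and this is where the specific coefficients are used.
\item Coclosedness, $\CEadj H=0$: by \eqref{eq:CE-adjoint-pairing} this is equivalent to $\bar g(H,d_\mu\alpha)=0$ for every two-form $\alpha$ of weight $q$. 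This is a small finite list: $E^{35}$ for $n=5$, and $E^{1,11}$-type terms are absent, so essentially $E^{57},\dots$ do not occur and only $E^{i,q-i}$ with $q-i\le n$ remain. Each check is a single orthonormal-coefficient identity involving $\sqrt{g_k/(g_ig_j)}$ and the $h_I$.
\end{itemize}
If some summand happens to satisfy the subalgebra conditions of Lemma~\ref{lem:harmonic-subalgebra-support}, that summand can be dispatched immediately.

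Third, for the metric equation I compute the diagonal of $\Ric_\mu$ from \eqref{eq:nice-Ricci} using the orthonormal coefficients $c^2=g_k/(g_ig_j)$. I compute $H^2$ from $(H^2)_{ii}=2\sum_{I\ni i}h_I^2$. Then I check, entry by entry, that
\[
\Ric_\mu-\tfrac14H^2=\lambda\Id+2\phi ,
\]
that is, that the $i$-th entry equals $2i-8$ for $n=5$ and $2i-12$ for $n=6$. This is pure arithmetic with the given rational values of $g_i$ and $h_I^2$.

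The main obstacle is bookkeeping rather than any conceptual difficulty. The closedness and coclosedness checks involve cross-cancellations between different torsion summands and different brackets, so signs and normalizations must be tracked carefully; in particular the conversion $e^i=\sqrt{g_i}E^i$ enters there. I would do these weight-space computations first, because they pin down the relative signs of the $h_I$. The Ricci table then follows routinely.
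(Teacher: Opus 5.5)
Your plan is correct and is essentially the paper's proof: niceness and the pairwise-intersection condition make $\Ric_\mu$ and $H^2$ diagonal, harmonicity is checked one weight space at a time, and the metric equation is an entry-by-entry comparison with $\lambda\Id+2\phi$, with $\rho(2\phi)H=2\lambda H$ following from the constant weights $8$ and $12$. One correction to your bookkeeping: for $n=5$ the summands $E^{125}$ and $E^{134}$ are each closed (there are no weight-eight four-forms in dimension five), so the coefficient ratio is forced by coclosedness, namely orthogonality to $d_\mu e^{35}=-\sqrt{30/7}\,e^{125}-\sqrt{12/7}\,e^{134}$; for $n=6$ coclosedness is automatic because every two-form has weight at most $11<12$, and the coefficients are forced instead by the two components of $d_\mu H$ along $e^{1236}$ and $e^{1245}$.
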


\begin{proof}
For \eqref{eq:m2-five-data}, order the brackets as $12\to3$, $13\to4$, $14\to5$, $23\to5$. Their coefficient squares are $(30/7,7/2,12/7,3)$. Both torsion summands are closed and have weight eight. The weight-eight two-form space is $\R e^{35}$, and $de^{35}=-\sqrt{30/7}\,e^{125}-\sqrt{12/7}\,e^{134}$. Its exterior-inner-product pairing with $H$ is zero because $\sqrt{30/7}\sqrt{5/7}=\sqrt{12/7}(5/\sqrt{14})=5\sqrt6/7$. Hence $H$ is harmonic. The full tensors are
\begin{equation}\label{eq:m2-five-tensors}
\Ric=\diag\left(-\frac{19}4,-\frac{51}{14},-\frac{31}{28},\frac{25}{28},\frac{33}{14}\right),\quad H^2=\diag\left(5,\frac{10}7,\frac{25}7,\frac{25}7,\frac{10}7\right).
\end{equation}
Equation~\eqref{eq:m2-five-tensors} gives $\Ric-H^2/4=\diag(-6,-4,-2,0,2)=-8\Id+2\phi$. The derivation $D=2\phi$ is symmetric, and weight eight gives $\rho(D)H=-16H=2\lambda H$.

For \eqref{eq:m2-six-data}, order the brackets as $12\to3$, $13\to4$, $14\to5$, $15\to6$, $23\to5$, $24\to6$, and write their positive coefficients as $a_1,\ldots,a_6$. The metric gives squares $(16/3,6,8/3,3,16/3,8/3)$. For $H=h_1e^{156}+h_2e^{246}+h_3e^{345}$, we have
\begin{equation}\label{eq:m2-six-closedness}
dH=(-a_2h_2+a_5h_1)e^{1236}
+(-a_1h_3-a_4h_2-a_6h_1)e^{1245}.
\end{equation}
Substitution into \eqref{eq:m2-six-closedness} gives the coefficients $-4+4$ and $4\sqrt2-2\sqrt2-2\sqrt2$, both zero. The form has weight twelve, whereas every two-form has weight at most eleven. Since $\CEadj$ preserves weight, $\CEadj H=0$. The full tensors are
\begin{equation}\label{eq:m2-six-tensors}
\Ric=\diag\left(-\frac{17}2,-\frac{20}3,-3,\frac13,\frac52,\frac{17}6\right),\quad H^2=\diag\left(6,\frac{16}3,12,\frac{52}3,18,\frac{34}3\right).
\end{equation}
Equation~\eqref{eq:m2-six-tensors} gives $\Ric-H^2/4=\diag(-10,-8,-6,-4,-2,0)=-12\Id+2\phi$. Again $D=2\phi$ is symmetric, and $\rho(D)H=-24H=2\lambda H$. Thus both constructions satisfy \eqref{eq:reduced-soliton-system}.
\end{proof}

In dimension seven, the following parameters give a solution by radicals.

\begin{proposition}\label{prop:m2-seven-existence}
Let $s=\sqrt{37}$ and set
\begin{equation}\label{eq:m2-seven-parameters}
A=\frac2{11},\quad B=\frac{74-6s}{11},\quad C=\frac{6s}{11},\quad F=\frac{48}{11},\quad P=\frac{6(s-3)}{11},\quad Q=\frac{92-12s}{11}.
\end{equation}
On $\mathfrak m_2(7)$ take
\begin{equation}\label{eq:m2-seven-data}
g=\diag(1,C,AC,ABC,ABC^2,ABC^2F,ABC^3F), H=\sqrt P(e^{137}+e^{236})-\sqrt Q\,e^{146},\quad \lambda=-11,\quad D=2\phi.
\end{equation}
These data define a generalized nilsoliton with nonzero torsion.
\end{proposition}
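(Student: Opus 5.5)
The verification follows the template used for Proposition~\ref{prop:m2-low-dimensional-existence}. The check reduces to Corollary~\ref{cor:reduced-soliton-system}: we need $d_\mu H=0$, $\CEadj H=0$, $\Ric-\frac14H^2=-11\Id+2\phi$, and $\rho(2\phi)H=-22H$. The fixed basis is nice, and the triples $\{1,3,7\},\{2,3,6\},\{1,4,6\}$ pairwise meet in at most one index. By Section~\ref{sec:weights}, $\Ric$ and $H^2$ are therefore diagonal, so every check reduces to scalar identities in $s=\sqrt{37}$.

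First I would record the orthonormal bracket coefficients. The brackets are $1i\to i+1$ for $2\le i\le6$ and $23\to5$, $24\to6$, $25\to7$. Their squares are $g_k/(g_ig_j)$. With $g_1=1$, $g_2=C$, $g_3=AC$, $g_4=ABC$, $g_5=ABC^2$, $g_6=ABC^2F$, $g_7=ABC^3F$, the chain squares are $A,B,C,F,C$ and the $E_2$-brackets give $B,C,F$. This is presumably the design of the parametrization.

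Each torsion triple has weight $11$, so $\rho(2\phi)H=-22H=2\lambda H$ is automatic. Since $d_\mu$ and $\CEadj$ preserve weight and distinct weights are orthogonal, I would check coclosedness by pairing $H$ with $d_\mu$ of the weight-$11$ two-forms $e^{47}$ and $e^{56}$. Closedness only requires the weight-$11$ four-form components of $dH$, namely $e^{1237}$, $e^{1236}$-type terms and $e^{1246}$, $e^{1345}$ and similar. Each gives a linear relation among $\sqrt P$, $\sqrt Q$ and the coefficients $\sqrt A,\dots$. I expect relations of the form $\sqrt{P}\sqrt{F}=\sqrt{Q}\sqrt{C}$, which I would verify as polynomial identities after squaring and checking signs.

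Next I would compute $\Ric=\frac12\sum_a c_a^2\alpha_a$ via \eqref{eq:nice-Ricci} and $H^2=\diag(2(P+Q),2P,4P,2Q,0,2(2P+Q)... )$ from $(H^2)_{ii}=2\sum_{i\in I_r}h_r^2$. For instance, $(H^2)_{66}=2(P+Q)$, $(H^2)_{77}=2P$ and $(H^2)_{33}=4P$. I would then check the seven diagonal equations $\Ric_{ii}-\frac14(H^2)_{ii}=2i-11$. These are linear in $A,\dots,Q$ with rational coefficients except for the $\sqrt{37}$ parts, so each splits into a rational part and a $\sqrt{37}$ part.

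The main obstacle is this bookkeeping: seven Ricci equations plus the closedness and coclosedness relations must all hold simultaneously. It also has to be confirmed that $P>0$ and $Q>0$, that is, $s>3$ and $92>12s$ (true since $12s\approx73$), so that $\sqrt P$ and $\sqrt Q$ are real and nonzero. Positivity of $A,B,C,F$ requires $74>6s\approx36.5$, which holds, so $g$ is positive definite. Once all identities hold, Corollary~\ref{cor:reduced-soliton-system} concludes the proof, and $H\neq0$ because $P>0$.
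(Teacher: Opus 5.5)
Your plan is the paper's proof. Niceness and the intersection condition make $\Ric$ and $H^2$ diagonal. Closedness and coclosedness are read off from weight-eleven components, and the metric equation becomes seven scalar identities. The torsion equation $\rho(2\phi)H=-22H$ follows from the common weight eleven. However, several of the concrete inputs you wrote down are wrong, and with them the verification would fail as stated.

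First, the $E_2$-brackets have coefficient squares $B,F,F$, not $B,C,F$, since $g_6/(g_2g_4)=ABC^2F/(C\cdot ABC)=F$. With your values, $2\Ric_{22}$ would be $-A-B-C-F$ instead of $-A-B-2F$, and $\Ric_{22}-\frac14(H^2)_{22}=-7$ would fail.

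Second, none of $e^{1237},e^{1236},e^{1246},e^{1345}$ has weight eleven. The weight-eleven four-forms are spanned by $e^{1235}$ alone, and $d(h_1e^{137}+h_2e^{146}+h_3e^{236})=\sqrt F\,(h_1-h_3)e^{1235}$. So closedness is automatic, because $e^{137}$ and $e^{236}$ carry the same coefficient $\sqrt P$.

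Third, the only nontrivial parameter relation comes from coclosedness, not closedness. One has $\CEadj H=-(\sqrt B\,h_1+\sqrt C\,h_2)e^{47}-(\sqrt C\,h_2+\sqrt B\,h_3)e^{56}$, so you need $BP=CQ$, i.e. $(74-6s)(s-3)=s(92-12s)$. This identity holds, and it is the only place where $s^2=37$ is used. The relation $PF=QC$ that you anticipated is false for $s=\sqrt{37}$.

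Finally, $(H^2)_{66}=2(P+Q)$, as in your second statement, not $2(2P+Q)$. With these corrections, your seven identities coincide with the paper's and check directly. You should also record that $D=2\phi$ is a derivation, and that it is symmetric for the diagonal metric.
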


\begin{proof}
All six parameters are positive because $6<s<7$, and direct multiplication gives $BP=CQ$. Set $a=\sqrt A$, $b=\sqrt B$, $c_1=\sqrt C$, and $f=\sqrt F$. In the bracket order $12\to3$, $13\to4$, $14\to5$, $15\to6$, $16\to7$, $23\to5$, $24\to6$, $25\to7$, the metric in \eqref{eq:m2-seven-data} gives coefficients $(a,b,c_1,f,c_1,b,f,f)$.

Put $h=\sqrt P$. Since $BP=CQ$, the torsion can be written as $H=h e^{137}-(b/c_1)h e^{146}+h e^{236}$. For a form $h_1e^{137}+h_2e^{146}+h_3e^{236}$ and the displayed bracket coefficients, the differential and codifferential are
\begin{equation}\label{eq:m2-seven-harmonic-equations}
dH=f(h_1-h_3)e^{1235}, \quad\CEadj H=-(bh_1+c_1h_2)e^{47}-(c_1h_2+bh_3)e^{56}.
\end{equation}
The weight-eleven four-form space is $\R e^{1235}$, and the weight-eleven two-form space is spanned by $e^{47},e^{56}$, so no other components occur. Substituting $(h_1,h_2,h_3)=(h,-bh/c_1,h)$ into \eqref{eq:m2-seven-harmonic-equations} proves harmonicity. The full diagonal tensors are
\begin{equation}\label{eq:m2-seven-tensors}
\begin{aligned}
2\Ric={}&\diag(-A-B-2C-F,-A-B-2F,A-2B, B-C-F,B+C-2F,2F-C,C+F),\\
H^2={}&2\diag(P+Q,P,2P,Q,0,P+Q,P).
\end{aligned}
\end{equation}
Substitution of \eqref{eq:m2-seven-parameters} into \eqref{eq:m2-seven-tensors} gives $\Ric-H^2/4=\diag(-9,-7,-5,-3,-1,1,3)=-11\Id+2\phi$. The map $D=2\phi$ is a symmetric derivation, and weight eleven gives $\rho(D)H=-22H=2\lambda H$. This proves \eqref{eq:reduced-soliton-system}.
\end{proof}

We next construct a solution in dimension eight. Its parameters are rational functions of a root of an explicit polynomial. In this construction, $a,v,w,z,r,U,V,W,t$ are scalars. Define
\begin{equation}\label{eq:eight-root-polynomials}
\begin{aligned}
\mathscr P(y)={}&90326016y^7-51492672y^6+138824912y^5\\
&-106249414y^4-78228073y^3-48787989y^2\\
&-42054372y+26823744,\\
\mathscr C(y)={}&69984y^5-857520y^4-606034y^3\\
&+347995y^2-304056y-415872,\\
\mathscr N(y)={}&177120y^5+339640y^4+256060y^3\\
&+37840y^2-123150y-54720.
\end{aligned}
\end{equation}
Set $y_-=11395227/10^7$, $y_+=11395228/10^7$, and $I=[y_-,y_+]$. For $y$ with $\mathscr C(y)(y+1)\neq0$, define
\begin{equation}\label{eq:eight-root-rational-functions}
t=-\frac{\mathscr N(y)}{\mathscr C(y)},\quad z=\frac{67t/10-1}{y+1},\quad v=yz, \quad W=2v-\frac{13}{5}t,\quad V=yW.
\end{equation}
Define the remaining parameters by the affine expressions below. We also record equivalent expressions for $v,z$, which are used to verify the metric equation.
\begin{equation}\label{eq:m2-eight-linear-parameters}
\begin{aligned}
a&=2V+W-\frac{32}{5}t,&v&=\frac W2+\frac{13}{10}t,\\
w&=-V-\frac W2+\frac{83}{10}t-1,&z&=-\frac W2+\frac{27}{5}t-1,\\
r&=2-2t,&U&=-V-\frac W2+\frac{22}{5}t.
\end{aligned}
\end{equation}
The expressions for $v,z$ agree with \eqref{eq:eight-root-rational-functions} because $W=2yz-13t/5$ and $(1+y)z=67t/10-1$.

\begin{lemma}\label{lem:m2-eight-parameter-certificate}
The polynomial $\mathscr P$ has a unique zero $y_*\in I$. Evaluated at $y_*$, all nine parameters $a,v,w,z,r,U,V,W,t$ belong to $(1/3,3/2)$ and satisfy
\begin{equation}\label{eq:m2-eight-compatibility}
wz=r,\qquad Va=Uw,\qquad Wav=Ur.
\end{equation}
\end{lemma}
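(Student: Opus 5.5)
The plan is to treat the statement as an exact algebraic certificate. Every quantity reduces to a rational function of the single variable $y$ with rational coefficients, and all verifications become polynomial identities or sign checks on the short rational interval $I$.

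\emph{Existence and uniqueness of $y_*$.} First I would evaluate $\mathscr P(y_-)$ and $\mathscr P(y_+)$ exactly in rational arithmetic, check that they have opposite signs, and apply the intermediate value theorem. For uniqueness I would show that $\mathscr P'$ has constant sign on $I$. I would do this by writing $y=y_-+\varepsilon$ with $0\le\varepsilon\le10^{-7}$ and expanding $\mathscr P'(y_-+\varepsilon)$ in powers of $\varepsilon$. The constant term is then bounded away from zero, and the remaining terms are crudely bounded by the sum of absolute values of the coefficients times powers of $10^{-7}$. Equivalently, one could evaluate $\mathscr P'$ at $y_-$ and bound $\mathscr P''$ on $I$.

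\emph{Well-definedness and the bounds $(1/3,3/2)$.} I would check that $\mathscr C$ and $y+1$ do not vanish on $I$, using the same expansion-about-$y_-$ argument, so that $t$ and all derived parameters are rational functions that are smooth on $I$. Next I would express each of the nine parameters $a,v,w,z,r,U,V,W,t$ explicitly as a rational function $N_k(y)/\mathscr C(y)^{e_k}(y+1)^{f_k}$. Here $t$ is given directly, then $z,v,W,V$ follow from \eqref{eq:eight-root-rational-functions}, and $a,w,r,U$ follow from the affine formulas \eqref{eq:m2-eight-linear-parameters}. For each parameter I would evaluate it at $y_-$ and bound its variation over an interval of width $10^{-7}$ by a derivative bound. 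The numerical values should sit well inside $(1/3,3/2)$, so a crude Lipschitz estimate suffices. Before using the alternative expressions for $v$ and $z$ in \eqref{eq:m2-eight-linear-parameters}, I would confirm them from \eqref{eq:eight-root-rational-functions} as indicated in the text.

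\emph{The compatibility relations \eqref{eq:m2-eight-compatibility}.} For each of $wz-r$, $Va-Uw$ and $Wav-Ur$, I would clear the denominators, which are powers of $\mathscr C(y)$ and $(y+1)$ and are nonzero on $I$, to obtain polynomials $R_1,R_2,R_3\in\mathbb Q[y]$. I would then show that each $R_k$ is divisible by $\mathscr P$ by performing polynomial division and checking that the remainder is identically zero. This gives $R_k(y_*)=0$, hence the three identities at $y_*$.

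I expect this to be the main obstacle, mostly as a matter of organisation rather than ideas. The construction suggests that $t$ was obtained by solving two of the relations and eliminating, so that $\mathscr N/\mathscr C$ is the linear remainder of a gcd computation. In that case one relation may hold identically in $y$, and only the others reduce to multiples of $\mathscr P$. I would first test which $R_k$ vanish identically, and for the rest compute the remainder modulo $\mathscr P$ with exact integer arithmetic. If a remainder were nonzero, the fallback would be to compute the gcd of $R_k$ with $\mathscr P$ and check that $y_*$ is its only root in $I$.
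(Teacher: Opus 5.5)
Your plan is correct and follows essentially the same route as the paper: exact endpoint signs and a sign bound on the derivative give the unique root in $I$. Rigorous enclosures over $I$ give the bounds on the nine parameters; the paper uses interval Horner arithmetic where you use a Lipschitz estimate. Finally, the cleared numerators of $wz-r$ and $Va-Uw$ are shown to be exactly divisible by the degree-seven polynomial.

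The one shortcut the paper takes concerns the third relation. Neither of the first two relations holds identically in $y$, so your guess that one might is wrong here. However, the third relation is a formal consequence of the other two. Since $v=yz$ and $V=yW$, one has $Wav-Ur=z(Va-Uw)+U(wz-r)$, so no separate division is needed for it.
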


\begin{proof}
Using \eqref{eq:eight-root-polynomials}, exact endpoint evaluation and interval Horner evaluation of $\mathscr P'$ give
\begin{equation}\label{eq:eight-root-isolation}
-28<\mathscr P(y_-)<-27,\qquad 60<\mathscr P(y_+)<61,\quad \mathscr P'(y)>874000000\qquad(y\in I).
\end{equation}
Thus \eqref{eq:eight-root-isolation} and the intermediate value theorem give a unique root $y_*\in(y_-,y_+)$. Interval Horner evaluation also gives
\begin{equation}\label{eq:eight-denominator-bounds}
-2518642<\mathscr C(y)<-2518640,\qquad
1145965<\mathscr N(y)<1145967\qquad(y\in I).
\end{equation}
Since $y+1>2$, equation~\eqref{eq:eight-denominator-bounds} ensures that all rational functions are defined on $I$.

To obtain Table~\ref{tab:m2-eight-parameter-bounds}, retain the exact rational endpoints from the Horner evaluations of $\mathscr C$ and $\mathscr N$ and apply interval operations in the order $t,z,v,W,V,a,w,r,U$. Products use the minimum and maximum of the four endpoint products, and division uses the reciprocal interval. The table gives the asserted positivity bounds throughout $I$.

\begin{table}[htbp]
\centering
\begin{tabular}{crrrrrrrrr}
\hline
&$a$&$v$&$w$&$z$&$r$&$U$&$V$&$W$&$t$\\
\hline
Lower&3639&10910&11384&9574&10900&3639&11384&9990&4549\\
Upper&3641&10911&11385&9575&10901&3640&11385&9991&4550\\
\hline
\end{tabular}
\caption{Strict rational bounds for $10^4$ times each parameter on $I$.}\label{tab:m2-eight-parameter-bounds}
\end{table}

It remains to prove the compatibility equations at $y_*$. Substituting \eqref{eq:eight-root-rational-functions} and \eqref{eq:m2-eight-linear-parameters}, clearing denominators, and expanding gives
\begin{equation}\label{eq:eight-exact-factor-identities}
\begin{aligned}
\mathscr C(y)^2(wz-r)
&=-134(2y+1)(108y^2-55y-96)\mathscr P(y),\\
\mathscr C(y)^2(Va-Uw)
&=6(2y+1)(3888y^4+396y^3+2044y^2-777y-1824)\mathscr P(y).
\end{aligned}
\end{equation}
Since $\mathscr P(y_*)=0$ and $\mathscr C(y_*)\neq0$, equation~\eqref{eq:eight-exact-factor-identities} gives $wz=r$ and $Va=Uw$. The definitions $V=yW$ and $v=yz$ give $Wav-Ur=z(Va-Uw)+U(wz-r)$, proving the third equation in \eqref{eq:m2-eight-compatibility}.
\end{proof}

\begin{proposition}\label{prop:m2-eight-existence}
Evaluate the parameters at the root $y_*$ in Lemma~\ref{lem:m2-eight-parameter-certificate}. The following metric and three-form on $\mathfrak m_2(8)$ define a generalized nilsoliton with nonzero torsion.
\begin{equation}\label{eq:m2-eight-data}
\begin{aligned}
g&=\diag(1,1,a,av,av,avw,avwz,avwz), \quad H=B_0(E^{168}+E^{267}+E^{348}-E^{357}+E^{456}),\\
B_0^2&=U(avw)(avwz),\quad B_0>0,\quad
\lambda=-\frac{15}{2}t,\quad D=t\phi.
\end{aligned}
\end{equation}
Here the form is expressed in the original fixed coframe $E^i$.
\end{proposition}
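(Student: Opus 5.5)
We follow the pattern of the seven-dimensional construction in Proposition~\ref{prop:m2-seven-existence}. The plan is to verify, in the orthonormal frame $e_i=E_i/\sqrt{g_i}$, the three conditions of Corollary~\ref{cor:reduced-soliton-system}: $H$ is closed and coclosed, $\Ric-\frac14H^2=\lambda\Id+D$, and $\rho(D)H=2\lambda H$. We start with the last one, which is immediate. Every triple in $H$, namely $168,267,348,357,456$, has weight $15$. Hence $\rho(t\phi)H=-15tH=2\lambda H$ for $\lambda=-\frac{15}{2}t$. Moreover $D=t\phi$ is a symmetric derivation for the diagonal metric, and $t>0$ by Lemma~\ref{lem:m2-eight-parameter-certificate}. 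Since every parameter is positive, the metric is positive definite and $B_0$ is real and nonzero.

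\emph{Orthonormal data.} A bracket $[E_i,E_j]=E_k$ has orthonormal coefficient $\sqrt{g_k/(g_ig_j)}$. In the order $12\to3$, $13\to4$, $14\to5$, $15\to6$, $16\to7$, $17\to8$, $23\to5$, $24\to6$, $25\to7$, $26\to8$, the coefficient squares are
\[
(a,\ v,\ 1,\ w,\ z,\ 1,\ v,\ w,\ z,\ z).
\]
Writing $h_I$ for the orthonormal coefficient of $E^I$, we have $h_I=B_0/\sqrt{g_{i_1}g_{i_2}g_{i_3}}$. Its square is $B_0^2/(g_{i_1}g_{i_2}g_{i_3})$. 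With $B_0^2=U\,(avw)(avwz)$ this gives:
\begin{itemize}
\item $h_{168}^2=U$;
\item $h_{267}^2=U$;
\item $h_{348}^2=Uw/(av)$;
\item $h_{357}^2=Uw/(av)$;
\item $h_{456}^2=Uz/(av)$.
\end{itemize}
The compatibility relations \eqref{eq:m2-eight-compatibility} should simplify these. From $Va=Uw$ we get $Uw/(av)=V/v$. Using $wz=r$ and $Wav=Ur$, we get $Uz/(av)=Ur/(avw)=W/w$.

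\emph{Harmonicity.} This is the step that uses the compatibility relations structurally, and I expect it to be the main obstacle. Closedness and coclosedness only need checking in weight $15$, since $d_\mu$ and $\CEadj$ preserve weight and distinct weights are orthogonal.
\begin{itemize}
\item The weight-$15$ four-forms are indexed by quadruples summing to $15$ in $\{1,\dots,8\}$ that can receive a nonzero $d$. We list the relevant ones, for instance $e^{1248}$, $e^{1257}$, $e^{1347}$, $e^{1356}$, $e^{2346}$ and $e^{1238}$. For each we compute the coefficient of $dH$ as a signed sum of products (bracket coefficient)$\times h_I$.
\item The weight-$15$ two-forms are $e^{78}$ only, since $e^{69}$ does not exist. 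We check that $\bar g(H,d_\mu e^{78})=0$. Now $d e^{7}$ involves $e^{16}$ and $e^{25}$, and $d e^8$ involves $e^{17}$ and $e^{26}$. So $d e^{78}$ has components $e^{168}$, $e^{258}$, $e^{167}$ and $e^{267}$. Pairing with $H$ gives a condition of the shape $\sqrt z\,h_{168}\pm\sqrt z\,h_{267}=0$, which holds because $h_{168}^2=h_{267}^2$ and the sign works out.
\item For closedness, each four-form coefficient should reduce, after dividing by common square roots, to one of the identities $wz=r$, $Va=Uw$, $Wav=Ur$. Alternatively it may reduce to an identity among the sign pattern $(+,+,+,-,+)$. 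I will track signs carefully here.
\end{itemize}

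\emph{Metric equation.} The fixed basis is nice, and the triples above meet pairwise in at most one index. Hence \eqref{eq:nice-Ricci} and $H^2=-2\sum_r h_r^2\beta_r$ give diagonal formulas. We have $2\Ric=\sum_a c_a^2\alpha_a$, and $(H^2)_{ii}=2\sum_{I\ni i}h_I^2$. The target is $\Ric-\frac14H^2=\diag(-\frac{15}2t+it)_{i=1}^8$. This gives eight linear equations in $a,v,w,z,U,V,W,t$ and the constant $1$; note that $r$ enters only through $h^2$ after the substitutions above. They should match exactly the affine definitions \eqref{eq:m2-eight-linear-parameters}, including the stated alternative expressions for $v$ and $z$. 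It should also be verified whether the term $z$ appearing in some bracket squares (for instance $16\to7$) is meant to be handled via $r=2-2t$ together with $wz=r$. The verification is then a direct coordinatewise substitution. Finally, Corollary~\ref{cor:reduced-soliton-system} concludes the proof, and $H\neq0$ since $B_0>0$.
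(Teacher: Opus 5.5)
Your overall route is the paper's: check the reduced system of Corollary~\ref{cor:reduced-soliton-system} using the nice-basis formulas and the weight argument. However, the orthonormal data you write down are miscomputed, and with them the verification fails.

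The bracket $[E_2,E_5]=E_7$ has coefficient square $g_7/(g_2g_5)=avwz/(av)=wz$, not $z$. By $wz=r$ this equals $r$, and this is exactly where $r$ enters $\Ric$; contrary to your remark, it does not enter through $H^2$. For the torsion, $g_3g_4g_8=g_3g_5g_7=a^3v^2wz$ and $g_4g_5g_6=a^3v^3w$, so
\[
h_{348}^2=h_{357}^2=\frac{Uw}{a}=V,\qquad h_{456}^2=\frac{Uwz}{av}=\frac{Ur}{av}=W,
\]
rather than $Uw/(av)=V/v$ and $Uz/(av)=W/w$.

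With your values, $H^2$ involves $V/v$ and $W/w$. The diagonal entries of $2(\Ric-\frac14H^2)$ are then no longer affine in the parameters and do not reduce to \eqref{eq:m2-eight-linear-parameters}. Even your frame-wise closedness check would fail: the $e^{1248}$-coefficient of $dH$ is $\sqrt w\,h_{168}-\sqrt a\,h_{348}$, which vanishes only if $h_{348}^2=Uw/a$. Also, $e^{1238}$ has weight $14$; the weight-fifteen four-forms are $e^{1248},e^{1257},e^{1347},e^{1356},e^{2346}$.

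You have also misplaced the role of \eqref{eq:m2-eight-compatibility}. Closedness is metric-independent. In the fixed coframe all five coefficients of $H$ are $\pm B_0$, and computing $dE^{168},\dots,dE^{456}$ shows $dH=0$ from the sign pattern $(+,+,+,-,+)$ alone. Coclosedness needs only $g_1=g_2$ and $g_7=g_8$.

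The three compatibility relations serve solely to rename the actual coefficient squares as $(a,v,1,w,z,1,v,w,r,z)$ for the brackets and $(U,U,V,V,W)$ for the torsion. Then $H^2=2\diag(U,U,2V,V+W,V+W,2U+W,U+V,U+V)$, and each entry of $2R$ is affine, e.g.\ $2R_{55}=-V-W-r+v-w+1$. Substituting \eqref{eq:m2-eight-linear-parameters} gives $2R=t\diag(-13,-11,\dots,1)$. That substitution is the actual content of the metric equation, and your proposal leaves it at ``should match''; with the corrected data it does go through.
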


\begin{proof}
Lemma~\ref{lem:m2-eight-parameter-certificate} makes every metric entry and $B_0^2$ positive. Order the brackets as $12\to3$, $13\to4$, $14\to5$, $15\to6$, $16\to7$, $17\to8$, $23\to5$, $24\to6$, $25\to7$, $26\to8$. Computing $g_k/(g_ig_j)$ from \eqref{eq:m2-eight-data} gives the coefficient squares
\begin{equation}\label{eq:m2-eight-actual-squares}
(a,v,1,w,z,1,v,w,r,z).
\end{equation}
The ninth entry is $wz=r$ by \eqref{eq:m2-eight-compatibility}. For the ordered torsion triples $168,267,348,357,456$, the orthonormal coefficient squares are $(U,U,V,V,W)$, with the fourth coefficient negative. The first two squares equal $U$ by the definition of $B_0^2$. The next two are $Uw/a=V$, and the last is $Uwz/(av)=Ur/(av)=W$. Hence the metric and fixed three-form realize all the stated coefficients.

For closedness, the structure equations give
\begin{equation}\label{eq:m2-eight-closedness}
\begin{aligned}
dE^{168}&=E^{1248}, dE^{267}=-E^{1257}, \quad dE^{348}=-E^{1248}-E^{1347}-E^{2346},\\
dE^{357}&=-E^{1257}-E^{1347}-E^{1356}, \quad dE^{456}=-E^{1356}+E^{2346}.
\end{aligned}
\end{equation}
Substituting the signs in \eqref{eq:m2-eight-data} into \eqref{eq:m2-eight-closedness} gives $dH=0$. The form has weight fifteen, and the weight-fifteen two-form space is $\R E^{78}$. Since $dE^{78}=-E^{168}-E^{258}+E^{267}$, we obtain
\begin{equation}\label{eq:m2-eight-coclosedness}
\langle H,dE^{78}\rangle_\wedge
=B_0\left(-\frac1{g_1g_6g_8}+\frac1{g_2g_6g_7}\right)=0,
\end{equation}
because $g_1=g_2=1$ and $g_7=g_8$. Equation~\eqref{eq:m2-eight-coclosedness} and orthogonality of distinct weights give $\CEadj H=0$. Thus $H$ is harmonic.

It remains to verify the metric equation. The full torsion contraction is
\begin{equation}\label{eq:m2-eight-H-square}
H^2=2\diag(U,U,2V,V+W,V+W,2U+W,U+V,U+V).
\end{equation}
Using \eqref{eq:m2-eight-actual-squares} and \eqref{eq:m2-eight-H-square}, the eight entries of $2R=2(\Ric-H^2/4)$ are
\begin{equation}\label{eq:m2-eight-full-R}
\begin{aligned}
2R_{11}&=-U-a-v-w-z-2,&2R_{22}&=-U-a-r-v-w-z,\\
2R_{33}&=-2V+a-2v,&2R_{44}&=-V-W+v-w-1,\\
2R_{55}&=-V-W-r+v-w+1,&2R_{66}&=-2U-W+2w-2z,\\
2R_{77}&=-U-V+r+z-1,&2R_{88}&=-U-V+z+1.
\end{aligned}
\end{equation}
Substituting \eqref{eq:m2-eight-linear-parameters} into \eqref{eq:m2-eight-full-R} gives
\begin{equation}\label{eq:m2-eight-soliton-R}
2R=t\diag(-13,-11,-9,-7,-5,-3,-1,1),\qquad R=-\frac{15}{2}t\Id+t\phi.
\end{equation}
Equation~\eqref{eq:m2-eight-soliton-R} proves the metric equation. The map $D=t\phi$ is a symmetric derivation, and weight fifteen gives $\rho(D)H=-15tH=2\lambda H$. Hence \eqref{eq:reduced-soliton-system} holds.
\end{proof}

We now exclude generalized nilsolitons for $n\geq9$. Suppose that a soliton exists, and set $c=-\lambda>0$. Write $W=[\mathfrak m_2(n),\mathfrak m_2(n)]$. By Lemma~\ref{lem:m2-arbitrary-derivations}, the eigenvalues of $D$ on the whole algebra and on $W$ are $t(1,\ldots,n)$ and $t(3,\ldots,n)$, respectively. Positive trace gives $t>0$. Put
\begin{equation}\label{eq:m2-spectral-data}
T=\frac{n(n+1)}2,\qquad S=T-3,\qquad
Q=\frac{n(n+1)(2n+1)}6,\qquad r=n-2.
\end{equation}
If $n\geq10$, Proposition~\ref{prop:metabelian-spectral-obstruction} requires $\Delta_n=(5T-6)^2-24(n-2)Q>0$. However,
\begin{equation}\label{eq:m2-discriminant}
4\Delta_n=n^3(66-7n)-15n^2-88n+144<0\qquad(n\geq10).
\end{equation}
Indeed, $66-7n\leq-4$, and the remaining quadratic expression is negative throughout this range. Thus every metric and every torsion form are excluded for $n\geq10$.

\begin{lemma}\label{lem:m2-nine-cohomology}
The weight-sixteen component of $H^3_{\mathrm{CE}}(\mathfrak m_2(9))$ is zero.
\end{lemma}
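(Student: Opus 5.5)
The plan is to compute the weight-sixteen piece of the Chevalley--Eilenberg complex of $\mathfrak m_2(9)$ directly. Because $d_\mu$ preserves weight, this piece is the finite complex
\[
\Lambda^2_{16}\xrightarrow{\;d_\mu\;}\Lambda^3_{16}\xrightarrow{\;d_\mu\;}\Lambda^4_{16}.
\]
From the brackets \eqref{eq:m2-introduction} the structure equations are
\[
d_\mu E^1=d_\mu E^2=0,\qquad d_\mu E^3=-E^{12},\qquad d_\mu E^4=-E^{13},
\]
\[
d_\mu E^k=-E^{1,k-1}-E^{2,k-2}\qquad(5\leq k\leq 9).
\]
The spaces are small and are found by listing increasing index sets with entries at most nine and sum sixteen.

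\begin{itemize}
\item $\Lambda^2_{16}=\R E^{79}$, so it is one-dimensional.
\item $\Lambda^3_{16}$ has the eight-element basis $E^{169},E^{178},E^{259},E^{268},E^{349},E^{358},E^{367},E^{457}$.
\item $\Lambda^4_{16}$ has the eight-element basis $E^{1249},E^{1258},E^{1267},E^{1348},E^{1357},E^{1456},E^{2347},E^{2356}$.
\end{itemize}

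The goal is to show that $\ker(d_\mu|_{\Lambda^3_{16}})$ is one-dimensional and is spanned by $d_\mu E^{79}$. First check that $d_\mu E^{79}\neq0$. By the Leibniz rule it equals $-E^{169}-E^{259}+E^{178}$, which is a nonzero three-cocycle because $d_\mu^2=0$. It then suffices to prove that $d_\mu\colon\Lambda^3_{16}\to\Lambda^4_{16}$ has rank seven, that is, that its $8\times8$ matrix has a one-dimensional kernel.

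To organize the rank computation, split each weight space as $E^1\wedge(\cdots)\oplus(\text{forms without }E^1)$, and further by whether $E^2$ occurs. On forms avoiding $E^1$ and $E^2$, $d_\mu$ acts by replacing some $E^k$ with $-E^{1,k-1}-E^{2,k-2}$. This gives a block-triangular structure. The three-forms $E^{349},E^{358},E^{367},E^{457}$ map into the span of the $E^{1\ast\ast\ast}$ and $E^{2\ast\ast\ast}$ four-forms. The forms $E^{259},E^{268}$ (respectively $E^{169},E^{178}$) map into the $E^{12\ast\ast}$ four-forms, because applying $d_\mu$ to their remaining factors produces an $E^1$ (respectively an $E^2$).

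I will write out the resulting matrix explicitly, with the columns ordered as above. I will then row-reduce it, or exhibit a nonvanishing $7\times7$ minor, to show that its kernel is one-dimensional. The kernel is then automatically the line through $d_\mu E^{79}$, so the weight-sixteen cohomology vanishes.

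The main obstacle is purely computational: keeping the signs straight when reordering wedge products in the $8\times8$ matrix. In particular, the three-forms without $E^1$ each produce three or four terms, and a single sign error could change the rank. As a cross-check I will verify $d_\mu(d_\mu E^{79})=0$ in the computed matrix. As an independent confirmation of rank seven, I will also compare with the weight-sixteen Euler characteristic, which requires the rank of $d_\mu$ on $\Lambda^4_{16}\to\Lambda^5_{16}$.
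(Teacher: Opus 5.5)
Your plan is the paper's proof. The paper uses exactly your two weight-sixteen bases and solves the resulting $8\times8$ closedness system. It finds the closed space $\R(E^{169}+E^{178}+E^{259})$ and identifies it with $\R\, d_\mu E^{79}$.

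Your block split also matches the paper's elimination order. The $5\times4$ block from $E^{349},E^{358},E^{367},E^{457}$ to $E^{1348},E^{1357},E^{1456},E^{2347},E^{2356}$ is injective, which forces $a_5=\cdots=a_8=0$. The $3\times4$ block of $E^{169},E^{178},E^{259},E^{268}$ into the $E^{12\ast\ast}$ forms has rank three. Hence the rank is at least seven. For reference:
\[
\begin{aligned}
d_\mu E^{169}&=E^{1249}+E^{1267}, & d_\mu E^{178}&=E^{1258}-E^{1267},\\
d_\mu E^{259}&=-E^{1249}-E^{1258}, & d_\mu E^{268}&=-E^{1258}-E^{1267},\\
d_\mu E^{349}&=-E^{1249}-E^{1348}-E^{2347}, & d_\mu E^{358}&=-E^{1258}-E^{1348}-E^{1357}-E^{2356},\\
d_\mu E^{367}&=-E^{1267}-E^{1357}-E^{2347}+E^{2356}, & d_\mu E^{457}&=-E^{1357}-E^{1456}+E^{2347}.
\end{aligned}
\]

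Two corrections are needed.

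First, your sign for $d_\mu E^{79}$ is wrong. The term $-E^7\wedge d_\mu E^9$ contributes $E^7\wedge E^{18}=-E^{178}$, so $d_\mu E^{79}=-(E^{169}+E^{178}+E^{259})$, as in the paper. The form $-E^{169}-E^{259}+E^{178}$ you wrote is not closed: its differential is $2E^{1258}-2E^{1267}$. So the assertion that it is a cocycle ``because $d_\mu^2=0$'' is false as written. The argument survives, because you only use $d_\mu E^{79}\neq0$, and your planned check $d_\mu(d_\mu E^{79})=0$ would have exposed the slip.

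Second, the Euler-characteristic cross-check cannot confirm rank seven. The weight-sixteen complex is $0\to\Lambda^2_{16}\to\Lambda^3_{16}\to\Lambda^4_{16}\to\Lambda^5_{16}\to0$, with dimensions $1,8,8,1$, where $\Lambda^5_{16}=\R E^{12346}$. Its Euler characteristic is zero whatever the ranks of the differentials are. It therefore carries no information about $\operatorname{rank}(d_\mu|_{\Lambda^3_{16}})$. The explicit row reduction, or the block argument above, is the whole proof.
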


\begin{proof}
The weight-sixteen three-form basis, in order, is $E^{169},E^{178},E^{259},E^{268},E^{349},E^{358},E^{367},E^{457}$. Write their coefficients as $a_1,\ldots,a_8$. The structure equations $dE^3=-E^{12}$, $dE^4=-E^{13}$, and $dE^k=-E^{1,k-1}-E^{2,k-2}$ for $5\leq k\leq9$ give the complete closedness system
\begin{equation}\label{eq:m2-nine-CE-system}
\begin{gathered}
a_1-a_3-a_5=0,\quad a_2-a_3-a_4-a_6=0,\quad a_1-a_2-a_4-a_7=0,\quad -a_5-a_6=0,\\
-a_6-a_7-a_8=0,\quad -a_8=0, \quad -a_5-a_7+a_8=0,\quad -a_6+a_7=0.
\end{gathered}
\end{equation}
These equations are the coefficients in the weight-sixteen four-form basis $E^{1249}$, $E^{1258}$, $E^{1267}$, $E^{1348}$, $E^{1357}$, $E^{1456}$, $E^{2347}$, and $E^{2356}$. The last five equations of \eqref{eq:m2-nine-CE-system} give $a_5=a_6=a_7=a_8=0$. The first three then give $a_4=0$ and $a_1=a_2=a_3$. Thus the closed space is $\R(E^{169}+E^{178}+E^{259})$. Since $dE^{79}=-(E^{169}+E^{178}+E^{259})$, every closed form of weight sixteen is exact.
\end{proof}

\begin{proposition}\label{prop:m2-nine-nonexistence}
The algebra $\mathfrak m_2(9)$ admits no generalized nilsoliton.
\end{proposition}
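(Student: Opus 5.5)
We argue by contradiction. Suppose $(g,H)$ is a generalized nilsoliton on $\mathfrak m_2(9)$, and set $c=-\lambda$. Proposition~\ref{prop:nilsoliton-trace} gives $c>0$. The soliton derivation $D$ is a derivation, so Lemma~\ref{lem:m2-arbitrary-derivations} says its eigenvalues are $t,2t,\ldots,9t$. On the derived ideal $W$ they are $3t,\ldots,9t$. The positive-trace requirement $\tr D>\frac16|H|^2\geq0$ forces $t>0$.

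\textbf{Step 1: confine $c/t$ with Proposition~\ref{prop:metabelian-spectral-obstruction}.} Here $r=7$, $\tr D=45t$, $\tr(D|_W)=42t$ and $\tr(D^2)=285t^2$. Hence $L=3\cdot45t+2\cdot42t=219t$. Writing $u=c/t>0$, inequality \eqref{eq:metabelian-spectral-obstruction} becomes
\[
-14u^2+219u-855>0 .
\]
The discriminant is $219^2-4\cdot14\cdot855=81$, so the roots are $(219\pm9)/28$. Therefore $\tfrac{15}{2}<c/t<\tfrac{57}{7}$.

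\textbf{Step 2: rule out $H=0$.} If $H=0$, the trace identity \eqref{eq:nilsoliton-trace} gives $\tr(D^2)=c\tr D$. This reads $285t^2=45ct$, so $c/t=19/3$. Since $19/3<15/2$, this contradicts Step~1. (Alternatively, Proposition~\ref{prop:m2-classical-range} already excludes classical nilsolitons.) We may therefore assume $H\neq0$.

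\textbf{Step 3: reduce $H\neq0$ to a single weight.} By Proposition~\ref{prop:CE-spectral-prerequisite}, $[H]$ is a nonzero class in $H^3_{\mathrm{CE}}$ satisfying $\rho(D)[H]=-2c[H]$. So $-2c$ is an eigenvalue of the induced action of $\rho(D)$ on $H^3_{\mathrm{CE}}(\mathfrak m_2(9))$. Lemma~\ref{lem:m2-arbitrary-derivations} gives the characteristic polynomial of this action as $\prod_q(\xi+tq)^{b_{3,q}}$. Consequently $-2c=-tq$ for some weight $q$ with $b_{3,q}\neq0$, that is, $q=2c/t$. Step~1 then gives $15<q<114/7<17$, and since $q$ is an integer, $q=16$. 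Lemma~\ref{lem:m2-nine-cohomology} states that $b_{3,16}=0$, which is a contradiction.

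All three steps are short, so no real obstacle remains. The only care needed is to use the characteristic-polynomial form of Lemma~\ref{lem:m2-arbitrary-derivations}. The induced action of $\rho(D)$ need not preserve the weight decomposition, since the nilpotent part lowers weight. Its eigenvalues are nevertheless exactly the values $-tq$ with $b_{3,q}\neq0$, which is what Step~3 requires.
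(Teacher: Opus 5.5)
Your proof is correct and follows the paper's argument. It uses the spectral window $15<2c/t<114/7$ from Proposition~\ref{prop:metabelian-spectral-obstruction}, then the eigenvalue condition from Proposition~\ref{prop:CE-spectral-prerequisite} and Lemma~\ref{lem:m2-arbitrary-derivations} forcing weight $q=16$, which contradicts Lemma~\ref{lem:m2-nine-cohomology}. The only difference is minor: you rule out $H=0$ by noting that the trace identity forces $c/t=19/3$, outside the window, whereas the paper cites Proposition~\ref{prop:m2-classical-range}.
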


\begin{proof}
Proposition~\ref{prop:m2-classical-range} excludes $H=0$. If $H\neq0$, Proposition~\ref{prop:CE-spectral-prerequisite} gives a nonzero cohomology class with eigenvalue $-2c$ under $\rho(D)$. By Lemma~\ref{lem:m2-arbitrary-derivations}, this eigenvalue must equal $-tq$ for an integer $q$ with $H^3_{\mathrm{CE},q}\neq0$. Hence $q=2c/t$. For $n=9$, equation~\eqref{eq:m2-spectral-data} gives $T=45$, $S=42$, $Q=285$, and $r=7$. Substituting $c=tq/2$ into \eqref{eq:metabelian-spectral-obstruction} and multiplying by $2/t^2$, we obtain
\begin{equation}\label{eq:m2-nine-spectral-window}
-7q^2+219q-1710=-(q-15)(7q-114)>0.
\end{equation}
Equation~\eqref{eq:m2-nine-spectral-window} gives $15<q<114/7$, whose only integer is sixteen. This contradicts Lemma~\ref{lem:m2-nine-cohomology}. The argument uses the action of an arbitrary derivation on cohomology. It does not assume that the metric is diagonal or that $H$ has a single weight in the fixed basis, so it excludes all metrics and closed three-forms.
\end{proof}

\begin{theorem}[Theorem~\ref{thm:filiform-threshold-introduction}]\label{thm:filiform-threshold}
For every integer $n\geq5$, the Lie algebra $\mathfrak m_2(n)$ admits a generalized nilsoliton if and only if $5\leq n\leq8$. In each of these dimensions, there exists a generalized nilsoliton with nonzero torsion.
\end{theorem}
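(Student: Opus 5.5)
The theorem is an assembly of results already established in this section, so the plan is to split the range $n\geq5$ into three parts and cite the corresponding statement for each.

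\emph{Existence for $5\leq n\leq8$.} Proposition~\ref{prop:m2-low-dimensional-existence} covers $n=5$ and $n=6$. Proposition~\ref{prop:m2-seven-existence} covers $n=7$, and Proposition~\ref{prop:m2-eight-existence} covers $n=8$. In each case the exhibited three-form is visibly nonzero, since its coefficients are positive or nonzero multiples of basis forms. The dimension-eight case depends on Lemma~\ref{lem:m2-eight-parameter-certificate} for positivity, and this gives $B_0>0$. These four constructions also give the second assertion about nonzero torsion.

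\emph{Nonexistence for $n\geq10$.} Suppose a soliton exists. Proposition~\ref{prop:nilsoliton-trace} gives $c=-\lambda>0$. Lemma~\ref{lem:m2-arbitrary-derivations} shows that the symmetric soliton derivation has eigenvalues $t,2t,\ldots,nt$, and $t>0$ because $\tr D>0$. Its restriction to $W=[\mathfrak m_2(n),\mathfrak m_2(n)]$ has eigenvalues $3t,\ldots,nt$. Then $\tr D=tT$, $\tr(D|_W)=tS$ and $\tr(D^2)=t^2Q$, with $T,S,Q$ as in \eqref{eq:m2-spectral-data}. The second inequality of Proposition~\ref{prop:metabelian-spectral-obstruction} requires $(3T+2S)^2>24(n-2)Q$. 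Since $3T+2S=5T-6$, this says $\Delta_n>0$, which contradicts \eqref{eq:m2-discriminant}. This argument applies to every metric and every closed $H$, including $H=0$.

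\emph{Nonexistence for $n=9$.} This is Proposition~\ref{prop:m2-nine-nonexistence}. The case $H=0$ is excluded by Proposition~\ref{prop:m2-classical-range}. For $H\neq0$, the argument combines the spectral window $15<q<114/7$ with the vanishing of the weight-sixteen cohomology in Lemma~\ref{lem:m2-nine-cohomology}.

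Putting the three parts together proves the stated equivalence. The only real work here is the case split; the main obstacle, the dimension-nine cohomological argument, has already been handled. The remaining detail to check is the sign claim in \eqref{eq:m2-discriminant}: for $n\geq10$ we have $n^3(66-7n)\leq-4n^3$, and $-4n^3-15n^2-88n+144<0$.
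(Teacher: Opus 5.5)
Your proposal is correct and matches the paper's proof. Existence in dimensions five through eight comes from Propositions~\ref{prop:m2-low-dimensional-existence}, \ref{prop:m2-seven-existence} and \ref{prop:m2-eight-existence}, nonexistence for $n\geq10$ from \eqref{eq:m2-discriminant} via Proposition~\ref{prop:metabelian-spectral-obstruction}, and dimension nine from Proposition~\ref{prop:m2-nine-nonexistence}; your extra checks that $3T+2S=5T-6$ and that $4\Delta_n<0$ for $n\geq10$ are accurate.
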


\begin{proof}
Proposition~\ref{prop:m2-low-dimensional-existence} gives generalized nilsolitons with nonzero torsion in dimensions five and six. Propositions~\ref{prop:m2-seven-existence} and~\ref{prop:m2-eight-existence} give such solutions in dimensions seven and eight, respectively. Hence $\mathfrak m_2(n)$ admits a generalized nilsoliton with nonzero torsion whenever $5\leq n\leq8$.

For $n\geq10$, \eqref{eq:m2-discriminant} excludes every generalized nilsoliton. Proposition~\ref{prop:m2-nine-nonexistence} proves the same nonexistence in dimension nine. Thus $\mathfrak m_2(n)$ admits no generalized nilsoliton for $n\geq9$, completing the proof.
\end{proof}

\section*{Acknowledgements} 
This work was supported by the National Natural Science Foundation of China (12571025 and 12131012) and the Natural Science Research of Jiangsu Education Institutions of China (No. 23KJB110016).


\end{document}